\documentclass[final,1p,times]{elsarticle}

\usepackage{epsfig}

\usepackage{amsmath,amsthm,verbatim,amssymb,amsfonts,amscd, graphicx}
\usepackage{latexsym,graphicx, epsfig, bm, epstopdf, float, color}
\usepackage{tikz,siunitx}
\usepackage{scalerel}
\usetikzlibrary{shapes,arrows,patterns}
\usetikzlibrary{shapes.geometric}
\usepackage{pgfplots}
\usetikzlibrary{shapes.geometric,shapes.symbols}
\usetikzlibrary{patterns}
\usetikzlibrary{tikzmark}
\usepackage{algorithm}
\usepackage[noend]{algpseudocode}
\usepackage{soul}
\usepackage{enumitem}

\usepackage{caption}
\usepackage{subcaption}
\usepackage{graphics,mathtools,hyperref,cleveref}
\usepackage{dsfont}
\usepackage{framed}
\usepackage[most]{tcolorbox}
\usepackage{xcolor}
\usepackage[normalem]{ulem}

\newcommand{\av}[1]{\{\!\!\{#1\}\!\!\}} 
\newcommand{\jump}[1]{\lbrack\!\lbrack #1 \rbrack\!\rbrack} %usata

\renewcommand{\P}{\mathbf{P}}

\newcommand{\x}{\textbf{x}}
\newcommand{\y}{\textbf{y}}

\newcommand{\Tau}{\mathcal{T}}
\newcommand{\vertiii}[1]{{\left\vert\kern-0.25ex\left\vert\kern-0.25ex\left\vert #1 
    \right\vert\kern-0.25ex\right\vert\kern-0.25ex\right\vert}}

\theoremstyle{remark}
\newtheorem{remark}{Remark}

\newtheorem{lemma}{Lemma}
\newtheorem{theorem}{Theorem}

\usepackage[textsize=footnotesize]{todonotes}

\journal{}

\begin{document}

\begin{frontmatter}

%% Title, authors and addresses

%% use the tnoteref command within \title for footnotes;
%% use the tnotetext command for theassociated footnote;
%% use the fnref command within \author or \affiliation for footnotes;
%% use the fntext command for theassociated footnote;
%% use the corref command within \author for corresponding author footnotes;
%% use the cortext command for theassociated footnote;
%% use the ead command for the email address,
%% and the form \ead[url] for the home page:
%%  \title{Title\tnoteref{label1}}
%%  \tnotetext[label1]{}
%%  \author{Name\corref{cor1}\fnref{label2}}
%%  \ead{email address}
%% \ead[url]{home page}
%% \fntext[label2]{}
%% \cortext[cor1]{}
%% \affiliation{organization={},
%%             addressline={},
%%             city={},
%%             postcode={},
%%            state={},
%%             country={}}
%% \fntext[label3]{}

\title{A Posteriori Error Estimation for Parabolic Equations with Enriched Galerkin Finite Element Methods}

%% use optional labels to link authors explicitly to addresses:
%% \author[label1,label2]{}
%% \affiliation[label1]{organization={},
%%             addressline={},
%%             city={},
%%             postcode={},
%%             state={},
%%             country={}}
%%
%% \affiliation[label2]{organization={},
%%             addressline={},
%%             city={},
%%             postcode={},
%%             state={},
%%             country={}}

\author[CU]{Hyun-Geun Shin}
\author[FSU]{Yi-Yung Yang} %% Author name
\author[FSU]{Sanghyun Lee} %% Author name

%% Author affiliation

\affiliation[CU]{organization={Department of Mathematical Sciences, University of Texas at El Paso},
            addressline={500 W University Ave, El Paso, Texas, USA}}
\affiliation[FSU]{organization={Department of Mathematics, Florida State University},
            addressline={1017 Academic Way}, 
            city={Tallahassee},
            postcode={32306-4510}, 
            state={FL},
            country={USA}}

%% Abstract
% \begin{abstract}
% This paper introduces a novel a posteriori error estimation framework for the enriched Galerkin (EG) finite element method applied to linear parabolic equations. While the EG method has been recognized for its local conservation property and computational efficiency compared to discontinuous Galerkin methods, its mathematical analysis in the context of a posteriori error estimation for parabolic problems remains unexplored. In this work, we prove reliability and efficiency using the residual-based approach. Furthermore, we integrate these error estimators into an adaptive mesh refinement strategy, demonstrating their effectiveness in achieving efficient and reliable error control through several numerical examples. The proposed approach provides a significant advancement in the mathematical foundation and practical applicability of the EG method for time-dependent problems.
% \end{abstract}
\begin{abstract}{
This paper develops a residual-based a posteriori error estimator for the enriched Galerkin (EG) finite element approximation of linear parabolic equations. The EG method combines a globally continuous finite element space with an elementwise constant enrichment, thereby retaining local conservation with fewer degrees of freedom than a comparable fully discontinuous Galerkin discretization. Using backward Euler time discretization, we analyze the spatial residual of the associated time-discrete elliptic problem at each fixed time level on shape-regular quadrilateral meshes. Reliability and local efficiency estimates are established for the incomplete and nonsymmetric interior penalty formulations, corresponding to $\theta=0$ and $\theta=1$, respectively, whereas the symmetric interior penalty case is not covered by the present analysis. Temporal discretization and mesh-transfer errors are not estimated separately. The resulting estimator is incorporated into an adaptive mesh refinement algorithm, and numerical experiments, including tests on distorted quadrilateral meshes, demonstrate its ability to identify localized singular behavior and reduce the computational cost compared with uniform refinement.}
\end{abstract}

%%Graphical abstract
%\begin{graphicalabstract}
%\includegraphics{grabs}
%\end{graphicalabstract}

%%Research highlights
%\begin{highlights}
%\item Research highlight 1
%\item Research highlight 2
%\end{highlights}

%% Keywords
\begin{keyword}
%% keywords here, in the form: keyword \sep keyword
  enriched Galerkin method
  \sep a posteriori error
  \sep reliability \sep efficiency
%% PACS codes here, in the form: \PACS code \sep code

%% MSC codes here, in the form: \MSC code \sep code
%% or \MSC[2008] code \sep code (2000 is the default)

\end{keyword}

\end{frontmatter}

%% Add \usepackage{lineno} before \begin{document} and uncomment 
%% following line to enable line numbers
%\linenumbers

%% main text
%%
\section{Introduction}

Numerical errors in computational simulations pose significant challenges, as they directly affect the accuracy, reliability, and efficiency of numerical solutions. Over the years, considerable efforts have been devoted to mitigating such errors, leading to significant advancements in a posteriori error estimation methods~\cite{AINSWORTH19971}. By coupling these estimators with dynamic adaptive algorithms, one can effectively manage computational resources to minimize error and optimize performance.

In this work, we investigate a linear parabolic partial differential equation discretized using the enriched Galerkin (EG) finite element method~\cite{Shuyu2009,Becker2004}. The EG method augments continuous Galerkin finite element spaces with piecewise constant functions, thereby achieving local conservation at a lower computational cost than classical nonconforming discontinuous Galerkin finite element methods~\cite{LEE_EG_Elliptic_Parabolic,LEE201719}. Recently, EG methods have been employed in a variety of multiphysics applications, including two-phase flow in porous media~\cite{lee2018enriched}, poroelasticity~\cite{kadeethum2021enriched,lee2023locking,choo2018enriched}, hyperbolic equations~\cite{kuzmin2025bound}, thermoporoelasticity~\cite{yi2024physics}, and flow problems with fractures~\cite{kadeethum2020flow,lee2016phase}.

% \textcolor{blue}{Despite these developments, a posteriori error analysis for EG methods remains much less developed than the corresponding theory for conforming and discontinuous Galerkin finite element methods. In particular, to the best of our knowledge, there has been no systematic residual-based a posteriori error analysis for EG approximations of parabolic problems based on the residuals of the associated time-discrete elliptic problems. This is the main gap addressed in the present work.}
{Despite these developments, a posteriori error analysis for EG methods remains much less developed than the corresponding theory for conforming and discontinuous Galerkin finite element methods. In particular, to the best of our knowledge, there has been no systematic residual-based a posteriori error analysis for EG approximations of parabolic problems. {The present work addresses this gap by developing residual estimators adapted to the continuous--discontinuous structure of the EG approximation space.} }

Traditionally, residual-based a posteriori error analysis has been well-established for conforming finite element methods, with original developments in the context of linear elliptic problems before extension to parabolic settings~\cite{adjerid1999posteriori, gaevskaya2005posteriori}. For instance, \cite{picasso1998adaptive} considered a normalized heat equation solved via backward Euler and piecewise linear finite elements, establishing reliability through Clement’s interpolant and efficiency using bubble functions. Subsequently, \cite{chen2004adaptive} proposed error estimators and an adaptive algorithm echoing \cite{picasso1998adaptive}, aided by the MNS refinement strategy~\cite{morin2000data}. Such approaches primarily focused on error control in the $L^2(0,T;L^2(\Omega))$ norm.

% {\color{blue}As the field evolved, elliptic reconstruction methods broadened the scope of a posteriori estimates. Initially introduced by \cite{makridakis2003elliptic} and applied in a fully discrete setting by \cite{lakkis2006elliptic}, elliptic reconstruction has since spurred various enhancements. For example, \cite{bansch2012posteriori,bansch2013effect} showed its applicability to both mesh refinement and coarsening, whereas \cite{karakatsani2016posteriori} integrated it into fractional step $\vartheta$-approximations.
% More recent developments include long-time error estimation~\cite{sutton2020long}, analyses of delay PDEs~\cite{wang2022delay}, derivation of lower bounds~\cite{georgoulis2023lower}, and max-norm estimates~\cite{linbeta2024unified}. Further research extended reconstruction ideas to space-time approaches, notably for Crank--Nicolson~\cite{akrivis2006posteriori} and non-conforming discontinuous Galerkin schemes~\cite{kim2008adaptive, gupta2018posteriori, karakatsani2016posteriori}.}
{As the field evolved, elliptic reconstruction methods broadened the scope of a posteriori estimates for parabolic problems. Initially introduced by \cite{makridakis2003elliptic} and applied in a fully discrete setting by \cite{lakkis2006elliptic}, elliptic reconstruction has been used to analyze mesh refinement and coarsening~\cite{bansch2012posteriori,bansch2013effect}, fractional-step approximations~\cite{karakatsani2016posteriori}, Crank--Nicolson discretizations~\cite{akrivis2006posteriori}, and non-conforming discontinuous Galerkin schemes~\cite{kim2008adaptive, gupta2018posteriori, karakatsani2016posteriori}.}
% 

% {\color{orange}
% Meanwhile, a posteriori error estimators have also been generalized to various non-conforming and locally conservative numerical approaches. For instance, \cite{cangiani2021adaptive} investigated non-hierarchical Galerkin techniques, applying them to virtual element methods, while \cite{dai2024posteriori} examined weak Galerkin finite elements. In \cite{ray2023adaptive}, adaptive immersed finite element methods were proposed for interface problems.}

% {\color{blue}
% Meanwhile, a posteriori error estimators have also been generalized to other non-conforming and locally conservative numerical approaches, including virtual element methods~\cite{cangiani2021adaptive}, weak Galerkin finite elements~\cite{dai2024posteriori}, and adaptive immersed finite element methods for interface problems~\cite{ray2023adaptive}.}

For applications in which elementwise local conservation is essential~\cite{lee2018enriched,kadeethum2020flow}, discontinuous Galerkin (DG) methods have emerged as a widely adopted and thoroughly studied class of approaches, offering both rigorous error control and local conservation properties. For instance, \cite{georgoulis2011posteriori} established a posteriori error estimates for a DG method in space using elliptic reconstruction and a decomposition of continuous and discontinuous solution components.
{The purpose of this paper is to derive and test residual-based estimators specifically adapted to the EG finite element space. EG provides a locally conservative alternative to DG while using substantially fewer degrees of freedom than a comparable fully discontinuous approximation. Unlike DG methods, which employ fully discontinuous approximation spaces, the EG method combines a globally continuous finite element component with a discontinuous piecewise constant enrichment. Although the EG formulation employs the same interior-penalty bilinear-form structure as DG, its approximation space is only partially discontinuous. The continuous component produces no interior solution jumps, while the discontinuities arise from the element-wise constant enrichment. Consequently, the approximation arguments and the treatment of the jump contributions in the residual analysis must be adapted to the EG space rather than transferred directly from a fully discontinuous DG analysis. Accordingly, the proposed estimator reflects the continuous--discontinuous structure of EG and} contains the standard cell residual, flux-jump residuals, boundary residuals, and enrichment-jump contributions associated with the element-wise constant enrichment.

% \textcolor{blue}{The parabolic equation is discretized by the EG method in space and by the backward Euler method in time. The analysis in this paper focuses on the spatial residual at each fixed time level. More precisely, at each time step, the backward Euler discretization gives an elliptic problem on the current mesh, and our estimator measures the corresponding spatial residual. Thus, the temporal discretization error and the additional error introduced by transferring the previous solution between different adaptive meshes are not estimated separately in the present work. This distinction is important for the interpretation of the theoretical results and the numerical effectivity indices.}
{The parabolic equation is discretized using the EG method in space and the backward Euler method in time. The analysis in this paper focuses on the spatial residual associated with the time-discrete problem at each fixed time level. More precisely, at each time step, the backward Euler discretization gives an elliptic problem on the current mesh, and our estimator measures the corresponding spatial residual. Therefore, the reliability and efficiency estimates established in this work quantify the spatial discretization error of the time-discrete problem. Thus, the temporal discretization error and the additional error introduced by transferring the previous solution between different adaptive meshes are not estimated separately in the present work. This distinction is important for the interpretation of the theoretical results and the numerical effectivity indices.}

The main contributions of this paper are summarized as follows. First, we formulate the fully discrete EG approximation for linear parabolic equations on shape-regular quadrilateral meshes, allowing the finite element space to vary in time under adaptive mesh refinement. When the mesh changes between two consecutive time levels, the previous numerical solution is transferred to the current finite element space before evaluating the backward Euler residual. Second, we derive a residual-based a posteriori estimator for the EG approximation and prove reliability and local efficiency estimates for the spatial residual associated with the time-discrete elliptic problem obtained from the backward Euler discretization. {The analysis applies to the incomplete and nonsymmetric interior penalty formulations, corresponding to $\theta=0$ and $\theta=1$, respectively; the symmetric interior penalty case $\theta=-1$ is not covered by the present proof.} Third, we incorporate the estimator into an adaptive mesh refinement algorithm and demonstrate its performance through numerical examples involving singular solutions on an L-shaped domain, including tests on distorted quadrilateral meshes. The numerical results show that the proposed estimator successfully identifies localized singular behavior and substantially reduces the number of degrees of freedom compared with uniform refinement.

The remainder of this paper is organized as follows. In Section~\ref{sec:2}, we present the mathematical model for the given parabolic equation, and the temporal and spatial discretization including the enriched Galerkin methods are discussed in Section~\ref{sec:3}.
The a posteriori error estimates are derived in Section~\ref{sec:4}, including the reliability and efficiency results.
Finally, in Section~\ref{sec:num}, we present the results of numerical experiments designed to verify the derived theory and illustrate the capabilities and performance of the proposed adaptive algorithm.

\section{Governing system}
\label{sec:2}

We consider a parabolic diffusion model motivated by pressure flow in porous media.
Given an open, bounded domain $\Omega\subset\mathbb R^d$, $d=2,3$, and a final time $T>0$, we consider
\begin{equation}\label{eqn:main}
\partial_t p + \nabla \cdot (-K \nabla p) = f
\quad
\text{in } \Omega \times(0,T).
\end{equation}
Here, $p:=p(\mathbf x,t)\in\mathbb R$ is the primary scalar unknown.
In the porous-media context, $p$ represents the pressure at spatial location $\mathbf x\in\Omega$ and time $t\in(0,T)$.

We also define the Darcy velocity by
$\mathbf u := -K\nabla p.$
In this paper, we restrict our discussion to the isotropic permeability case
$K(\mathbf x)=\kappa(\mathbf x)\mathbf I$,
where $\mathbf I \in \mathbb{R}^{d\times d}$ is the identity matrix and $\kappa(\mathbf x)$ is a positive scalar permeability field. We assume that there exist constants $0<\kappa_{\min}\leq \kappa_{\max}<\infty$ such that

\begin{equation}
\kappa_{\min}
\leq
\kappa(\mathbf x)
\leq
\kappa_{\max}
\qquad
\text{for a.e. } \mathbf x\in\Omega.
\end{equation}
In addition, we assume that $\kappa$ is sufficiently regular on each mesh element, for instance $\kappa|_T\in W^{1,\infty}(T)$, so that the element residual $\nabla\cdot(K\nabla p_h)$ appearing in the a posteriori estimator is well-defined.

The equation is supplemented by a source term $f:=f(\mathbf x,t)\in\mathbb R$ and the following boundary and initial conditions:
\begin{equation}
p = g_D
\quad \text{on } \partial \Omega_D \times (0,T),
\qquad
-K \nabla p \cdot \mathbf n = g_N
\quad \text{on } \partial \Omega_N \times(0,T),
\qquad
p(\mathbf{x},0) = p_0
\quad \text{in } \Omega.
\label{eqn:bc}
\end{equation}
Here, $\partial \Omega_D\cup\partial \Omega_N = \partial \Omega$ and
$\partial \Omega_D\cap \partial \Omega_N = \emptyset$, representing a partition of the boundary $\partial\Omega$ into Dirichlet and Neumann parts.
The functions $g_D$ and $g_N$ denote the prescribed Dirichlet and Neumann data, respectively, and $\mathbf n:=\mathbf n(\mathbf x)\in\mathbb R^d$ is the unit outward normal vector to $\partial\Omega$.
Finally, $p_0:=p_0(\mathbf x)$ denotes the initial condition.
%\textcolor{blue}{With the sign convention in \eqref{eqn:bc}, the Neumann datum $g_N$ represents the outward Darcy flux}
%$$
%\textcolor{blue}{
%\mathbf u\cdot \mathbf n = -K\nabla p\cdot \mathbf n = g_N.}
%$$
%\textcolor{blue}{Equivalently, the exact solution satisfies $g_N+K\nabla p\cdot \mathbf n=0$ on $\partial\Omega_N\times(0,T)$, which is the sign convention used later in the Neumann boundary residual.}

\section{Numerical discretization}
\label{sec:3}

\subsection{Enriched Galerkin finite element spaces}

Let $\Tau_h$ be a shape-regular partition of $\Omega$ into quadrilateral elements $T$ in the sense of Ciarlet. That is, there exists a constant $\sigma>0$, independent of the mesh size, such that
$h_T/\rho_T \leq \sigma$ for all $T\in \Tau_h$,
where $h_T$ is the diameter of $T$ and $\rho_T$ is the diameter of the largest inscribed ball of $T$.
We also assume the standard local comparability between element and edge sizes on shape-regular quadrilateral meshes. Namely, for each edge $\gamma\subset\partial T$, the quantities $h_T$ and $h_\gamma$ are uniformly comparable, with constants depending only on the shape-regularity parameter.

The set of all edges $\gamma$ in $\Tau_h$ is denoted by
$\varepsilon_h=
\varepsilon_h^I
\cup
\varepsilon_h^N
\cup
\varepsilon_h^D$,
where $\varepsilon_h^I$ is the set of all interior edges, and $\varepsilon_h^D$ and $\varepsilon_h^N$ are the sets of boundary edges on which Dirichlet and Neumann conditions are imposed, respectively. These three sets are mutually disjoint. For convenience, we also define
$
\varepsilon_h^{ID}
:=
\varepsilon_h^I
\cup
\varepsilon_h^D
$.

We now introduce the finite element spaces. Let $\mathbb Q_k(T)$ denote the tensor-product polynomial space of degree at most $k$ in each coordinate direction on an element $T$; see, e.g., \cite{ciarlet1978finite,riviere2008discontinuous}. Throughout this paper, we assume $k\geq 1$.
The continuous Galerkin space based on $\mathbb Q_k$ Lagrange elements is defined by
\begin{equation}
M_0^k(\Tau_h)
:=
\left\{
v\in L^2(\Omega)
:
v|_T\in \mathbb Q_k(T)
\ \forall T\in\Tau_h
\right\}
\cap C^0(\overline{\Omega}).
\end{equation}
Here, the subscript in $M_0^k(\Tau_h)$ is used only to distinguish the continuous component from the discontinuous space below; it does not indicate homogeneous boundary conditions.
Similarly, the discontinuous $\mathbb Q_k$ finite element space is defined by
\begin{equation}
M^k(\Tau_h)
:=
\left\{
v\in L^2(\Omega)
:
v|_T\in \mathbb Q_k(T)
\ \forall T\in\Tau_h
\right\}.
\end{equation}
The enriched Galerkin finite element space of order $k$ is then given by
\begin{equation}
V_{h,k}^{\mathrm{EG}}
=
M_0^k(\Tau_h)
+
M^0(\Tau_h).
\end{equation}
Thus, $V_{h,k}^{\mathrm{EG}}$ consists of a globally continuous $\mathbb Q_k$ component enriched by element-wise constants. This is the key distinction between the EG space and the fully discontinuous Galerkin space $M^k(\Tau_h)$.

Since functions in $V_{h,k}^{\mathrm{EG}}$ are polynomial on each element, we have
\begin{equation}
V_{h,k}^{\mathrm{EG}} \subset H^2(\Tau_h).
\end{equation}
Here,
\begin{equation}
H^2(\Tau_h)
=
H^2(\Omega,\Tau_h)
:=
\left\{
v\in L^2(\Omega)
:
v|_T\in H^2(T),
\ \forall T\in \Tau_h
\right\}.
\end{equation}
This space is equipped with the broken Sobolev norm
\begin{equation}
\vertiii{v}_{H^2}
:=
\left(
\sum_{T\in\Tau_h}
\|v\|_{H^2(T)}^2
\right)^{1/2}.
\end{equation}

Since functions in $H^2(\Tau_h)$ may be discontinuous across interior edges, we define jumps and averages as follows. Let $\gamma\in\varepsilon_h^I$ be an interior edge shared by two neighboring elements $T_-$ and $T_+$, and let $\mathbf n_\gamma$ be a fixed unit normal vector on $\gamma$ oriented from $T_-$ to $T_+$. Then, for a scalar function $v\in H^2(\Tau_h)$, we define
\begin{equation}
\jump{v}
=
\jump{v}_\gamma
:=
\left(v|_{T_-}\right)\big|_\gamma
-
\left(v|_{T_+}\right)\big|_\gamma,
\end{equation}
and
\begin{equation}
\av{v}
=
\av{v}_\gamma
:=
\frac{1}{2}
\left(v|_{T_-}\right)\big|_\gamma
+
\frac{1}{2}
\left(v|_{T_+}\right)\big|_\gamma .
\end{equation}
On boundary edges $\gamma\in\varepsilon_h^D\cup\varepsilon_h^N$, we set
\begin{equation}
\jump{v}
=
\av{v}
=
v|_\gamma .
\end{equation}

For vector-valued quantities, such as $K\nabla v$, the jump and average are understood componentwise. In particular, the normal flux jump appearing later is defined on an interior edge by
\begin{equation}
\jump{\mathbf n_\gamma\cdot K\nabla v}
:=
\left(\mathbf n_\gamma\cdot K\nabla v|_{T_-}\right)\big|_\gamma
-
\left(\mathbf n_\gamma\cdot K\nabla v|_{T_+}\right)\big|_\gamma .
\end{equation}

\subsection{Spatial discretization using enriched Galerkin method}

First, we provide the spatial discretization of the system \eqref{eqn:main}--\eqref{eqn:bc} using the EG method.
The semi-discrete formulation is given as follows: find
$p_h\in L^2(0,T;V^{\mathrm{EG}}_{h,k})$ such that
\begin{equation}\label{eq:semi-EG-formulation}
\left(\partial_t p_h, w_h\right) + A_\theta(p_h(t), w_h) = F_\theta\left(w_h;t\right),
\qquad\forall w_h\in V^{\mathrm{EG}}_{h,k},
\qquad\forall t\in(0,T].
\end{equation}
Here, $A_\theta(\cdot,\cdot):H^2(\Tau_h)\times H^2(\Tau_h)\to\mathbb R$ is the bilinear form defined by
\begin{equation}
\begin{split}
A_\theta(p_h(t), w_h)
&:=
\sum_{T\in\Tau_h}
\left(K\nabla p_h(t), \nabla w_h\right)_T
-
\sum_{\gamma \in \varepsilon^{ID}_h}
\left(
\av{\mathbf n_\gamma\cdot K\nabla p_h(t)},
\jump{w_h}
\right)_\gamma
\\
&\quad
+
\theta
\sum_{\gamma \in \varepsilon^{ID}_h}
\left(
\jump{p_h(t)},
\av{\mathbf n_\gamma\cdot K\nabla w_h}
\right)_\gamma
+
\alpha
\sum_{\gamma\in\varepsilon^{ID}_h}
h_\gamma^{-1}K_{\max}(\gamma)
\left(
\jump{p_h(t)},
\jump{w_h}
\right)_\gamma .
\end{split}
\label{eq:A_theta}
\end{equation}
The linear functional $F_\theta(w_h;t)$ is given by
\begin{equation}
\begin{split}
F_\theta(w_h;t)
&:=
\sum_{T\in\Tau_h}
\left(f(t), w_h\right)_T
-
\sum_{\gamma \in \varepsilon^N_h}
\left(g_N(t), w_h\right)_\gamma
\\
&\quad
+
\theta
\sum_{\gamma\in \varepsilon_h^D}
\left(
g_D(t),
\mathbf n_\gamma\cdot K\nabla w_h
\right)_\gamma
+
\alpha
\sum_{\gamma\in \varepsilon_h^D}
h_\gamma^{-1}K_{\max}(\gamma)
\left(
g_D(t), w_h
\right)_\gamma .
\end{split}
\label{eq:F_theta}
\end{equation}
Here, $\alpha=\alpha(k)>0$ is the penalty parameter depending on the polynomial degree $k$.
Since we restrict our discussion to the isotropic permeability case $K(\mathbf x)=\kappa(\mathbf x)\mathbf I$, we define
\begin{equation}
K_{\max}(\gamma)
:=
\max_{\mathbf x\in\gamma}\kappa(\mathbf x).
\end{equation}

The different choices of $\theta$ lead to the symmetric interior penalty Galerkin formulation (SIPG) with $\theta=-1$ \cite{SIPG}, the incomplete interior penalty Galerkin formulation (IIPG) with $\theta=0$ \cite{DGscheme}, and the nonsymmetric interior penalty Galerkin formulation (NIPG) with $\theta=1$ \cite{sun2005symmetric,NIPG}. The reliability and efficiency analyses presented in this paper apply to the IIPG and NIPG formulations, corresponding to $\theta\geq 0$; the SIPG case is not covered by the present proof.

Note that the bilinear form in the EG formulation \eqref{eq:semi-EG-formulation} has the same structure as the corresponding DG formulation \cite{riviere2008discontinuous}. The key distinction is the choice of finite element space: the DG method uses a fully discontinuous space, whereas the EG method uses a continuous $\mathbb Q_k$ Lagrange component enriched by element-wise constants. Therefore, the consistency argument for the EG formulation follows from the DG consistency argument at the level of the bilinear form. In particular, if $p\in H^1(0,T;H^2(\Tau_h))$ is the solution to \eqref{eqn:main}--\eqref{eqn:bc}, then $p$ also satisfies \eqref{eq:semi-EG-formulation}; see Lemma~4.1 in \cite{riviere2008discontinuous}.

\subsection{Fully discrete enriched Galerkin formulation}

To consider temporal discretization of \eqref{eq:semi-EG-formulation}, we introduce a discrete time partition
$0=t_0<\cdots<t_n<\cdots<t_N=T$
with a uniform time step $\delta t=T/N$, where $N\in\mathbb N$.
Including the case that the mesh may change over time, we denote the triangulation, edge set, and finite element space at time $t_n$ by
$\Tau_{h,n}$, $\varepsilon_{h,n}$, and $V^{\mathrm{EG}}_{h,k,n}$, respectively, throughout this paper.

To distinguish from the semi-discrete formulation \eqref{eq:semi-EG-formulation}, we also denote
$g_D^n=g_D(t_n)$, $g_N^n=g_N(t_n)$, and $f^n=f(t_n)$.

When the mesh changes between two consecutive time levels, the numerical solution from the previous time level must be transferred to the current finite element space before the backward Euler term is evaluated. Therefore, we introduce a mesh-transfer operator
\begin{equation}
\mathcal I_n:
V^{\mathrm{EG}}_{h,k,n-1}
\to
V^{\mathrm{EG}}_{h,k,n}.
\end{equation}
For example, $\mathcal I_n$ may be chosen as an $L^2$-projection or a stable interpolation/projection operator. If the mesh does not change between two consecutive time steps, then $\mathcal I_n$ is simply the identity operator.

Then, the fully discrete EG formulation of problem \eqref{eqn:main}--\eqref{eqn:bc}, advanced in time from $t=t_{n-1}$ to $t=t_n$ with the backward Euler time discretization scheme, reads as follows: given
$p_h^{n-1}\in V^{\mathrm{EG}}_{h,k,n-1}$, find
$p_h^n\in V^{\mathrm{EG}}_{h,k,n}$ such that
\begin{equation}\label{eq:fullly_EG_discretization}
S_\theta(p_h^n,w_h)
=
F_\theta^n(w_h),
\qquad
\forall w_h\in V^{\mathrm{EG}}_{h,k,n},
\end{equation}
where
\begin{align}
S_\theta(p_h^n,w_h)
&:=
\frac{1}{\delta t}
(p_h^n,w_h)
+
A_\theta(p_h^n,w_h),
\label{eq:S_theta}
\\
F_\theta^n(w_h)
&:=
\frac{1}{\delta t}
(\mathcal I_n p_h^{n-1},w_h)
+
F_\theta(w_h;t_n).
\label{eq:F_theta_n}
\end{align}
Here, $p_h^n$ is the approximation to $p_h(\cdot,t_n)$ in \eqref{eq:semi-EG-formulation}.

Equivalently, the time-discrete term in \eqref{eq:fullly_EG_discretization} is interpreted as
\begin{equation}
\frac{1}{\delta t}
\left(
p_h^n-\mathcal I_n p_h^{n-1},
w_h
\right),
\qquad
w_h\in V^{\mathrm{EG}}_{h,k,n}.
\end{equation}
Thus, all inner products in the fully discrete formulation are evaluated on the current mesh $\Tau_{h,n}$ at time level $n$. The transfer may introduce the standard projection/interpolation error associated with mesh adaptation; this error is not analyzed separately in the present work.

\begin{remark}
The transfer operator $\mathcal I_n$ is included to make the fully discrete formulation well-defined on adaptive meshes. In the fixed-mesh case, $\mathcal I_n$ reduces to the identity map and the formulation becomes the standard backward Euler EG method.
\end{remark}

\begin{remark}
In \eqref{eq:fullly_EG_discretization}, \eqref{eq:S_theta}, and \eqref{eq:F_theta_n}, the test function belongs to the finite element space at the current time level, namely $V^{\mathrm{EG}}_{h,k,n}$. Therefore, it could be denoted more explicitly by $w_h^n$. For notational simplicity, however, we write $w_h$ throughout the formulation. The dependence on the time level $n$ is understood from the test space $V^{\mathrm{EG}}_{h,k,n}$.
\end{remark}

%in this paper, we will based on the residual of the %numerical solution $p^n_h\in V^\text{EG}_{h,k,n}$ to \eqref{eq:fullly_EG_discretization}
%{\color{blue}A posteriori error estimates provide an bounds on the error $\|p-p_h\|$ in a chosen norm of interest in the form of $\|p - p_h\|\leq C\eta $. These bounds are commonly referred to as the error estimator $\|p - p_h\|\leq C\eta $. If the error estimator can be computed element-wise on an given triangulation $\Tau_h$, for example, $\eta = \left(\sum_{T\in\Tau_h}\eta^2_T\right)^{1/2}$, where we called $\eta_T$ as local error estimator, then we may use $\{\eta_T\}_{T\in\Tau_h}$  to determine whether elements $T\in \Tau_h$ should be refined to reduce $\|p-p_h\|$. Such upper bounds
%Meanwhile, 
%and in this paper, we will based on the residual of the numerical solution $p^n_h\in V^\text{EG}_{h,k,n}$ to \eqref{eq:fullly_EG_discretization}
% }
%{
%\color{blue}
%}
%In this section, we present the a posteriori error analysis to derive the upper and lower bounds. 

% Our analysis starts with defining a dual norm
% \begin{equation}\label{eq:dual_norm}
%     \|\mathcal R\|_* := \sup_{w\in H^2(\Omega, \Tau_h)} \dfrac{\mathcal R(w)}{\|w\|_{H^2(\Omega, \Tau_{h,n})}},
% \end{equation}
% where 
% $\mathcal R\in \left(H^2(\Omega,\Tau_h)\right)^*$ as 
% \begin{equation}\label{eq:residual}
%     \mathcal R(w) :=  F(w) - S_\theta(p^n_h, w), \quad p^n_h \in V^\text{EG}_{h,k,n}, \quad  1\leq n\leq N.
% \end{equation}

\section{A posteriori error analysis}
\label{sec:4}

A posteriori error estimates use the residual to quantify the error between the numerical solution and the exact solution, thereby providing a practical and computable guide for adaptive mesh refinement.
Since the present work focuses on spatial error estimation at each fixed backward Euler time level, we first clarify the time-discrete problem with respect to which the residual is defined.

% {\color{purple}{
% At time level $t_n$, after transferring the previous numerical solution to the current finite element space, the backward Euler discretization gives an elliptic problem on the current mesh $\Tau_{h,n}$. We define $\widetilde p^n$ to be the exact solution of this time-discrete problem, satisfying
% }
% \begin{equation}
% {
% S_\theta(\widetilde p^n,w)
% =
% F_\theta^n(w),
% \qquad
% \forall w\in H^2(\Omega,\Tau_{h,n}).
% }
% \label{eq:time_discrete_exact_solution}
% \end{equation}
% {
% Here, $F_\theta^n$ contains the transferred previous numerical solution $\mathcal I_n p_h^{n-1}$, as defined in \eqref{eq:F_theta_n}. Therefore, $\widetilde p^n$ is not the exact continuous-in-time solution $p(\cdot,t_n)$, but rather the exact solution of the backward Euler problem at the fixed time level $t_n$. The difference $p(\cdot,t_n)-\widetilde p^n$ represents the temporal consistency error, and, on adaptive meshes, may also include the effect of transferring data between consecutive finite element spaces. These contributions are not estimated separately in the present work.
% }}
{
At time level $t_n$, after transferring the previous numerical solution to the current finite element space, the backward Euler discretization gives an elliptic problem on the current mesh $\Tau_{h,n}$. We define $\widetilde p^n$ to be the exact solution of this time-discrete elliptic problem, satisfying
\begin{equation}
    S_\theta(\widetilde p^n,w)
    =
    F_\theta^n(w)
    =
    \frac{1}{\delta t}\left(I_n p_h^{n-1},w\right)
    +
    F_\theta(w;t_n),
    \qquad
    \forall w\in H^2(\Omega,\Tau_{h,n}).
    \label{eq:time_discrete_exact_solution}
\end{equation}
Thus, $\widetilde p^n$ is the exact solution of the current time-discrete elliptic problem with the transferred previous numerical solution $I_n p_h^{n-1}$ treated as known data. It is not the exact continuous-in-time solution $p(\cdot,t_n)$. Consequently, the difference $p(\cdot,t_n)-\widetilde p^n$ may contain the temporal consistency error, the error inherited from the previous numerical solution, and, on adaptive meshes, the effect of transferring data between consecutive finite element spaces. These contributions are not estimated separately in the present work.
}

The residual of the numerical solution $p^n_h\in V^{\mathrm{EG}}_{h,k,n}$ to \eqref{eq:fullly_EG_discretization} is defined by
\begin{equation}
\label{eq:residual}
\mathcal R^n(w)
:=
F^n_\theta(w)
-
S_\theta(p^n_h,w),
\qquad
\forall w\in H^2(\Omega,\Tau_{h,n}).
\end{equation}
It belongs to the dual space $\left(H^2(\Omega,\Tau_{h,n})\right)^*$ associated with the dual norm
\begin{equation}
\label{eq:dual_norm}
\|\mathcal R^n\|_*
:=
\sup_{0\neq w\in H^2(\Tau_{h,n})}
\frac{|\mathcal R^n(w)|}{\|w\|_{H^2(\Tau_{h,n})}}.
\end{equation}
By the definition of $\widetilde p^n$ in \eqref{eq:time_discrete_exact_solution}, we have
\begin{equation}
\mathcal R^n(w)
=
S_\theta(\widetilde p^n,w)
-
S_\theta(p_h^n,w)
=
S_\theta(\widetilde e^n,w),
\end{equation}
where
\begin{equation}
\widetilde e^n
:=
\widetilde p^n-p_h^n
\end{equation}
denotes the spatial error with respect to the exact solution of the time-discrete problem.

Furthermore, by the Galerkin orthogonality,
\begin{equation}
S_\theta(\widetilde p^n-p_h^n,w_h)
=
S_\theta(\widetilde e^n,w_h)
=
0,
\qquad
\forall w_h\in V^{\mathrm{EG}}_{h,k,n},
\label{eq:orthogonal-error-femspace}
\end{equation}
we have, for any $w_h\in V^{\mathrm{EG}}_{h,k,n}$,
\begin{equation}
\label{eq:orthogonality}
\mathcal R^n(w)
=
S_\theta(\widetilde e^n,w)
=
S_\theta(\widetilde e^n,w-w_h)
=
\mathcal R^n(w-w_h).
\end{equation}
Consequently,
\begin{equation}
\|\mathcal R^n\|_*
=
\sup_{0\neq w\in H^2(\Tau_{h,n})}
\frac{|\mathcal R^n(w-w_h)|}{\|w\|_{H^2(\Tau_{h,n})}},
\end{equation}
where $w_h\in V^{\mathrm{EG}}_{h,k,n}$ is chosen as a suitable finite element approximation of $w$.

In the following subsections, we derive upper and lower bounds for this spatial residual. The reliability estimate provides a computable upper bound for $\|\mathcal R^n\|_*$ in terms of local residual indicators, while the local efficiency estimate shows that the indicators are bounded below by the residual up to local data-oscillation terms. These results should be interpreted as spatial residual estimates for the time-discrete problem at each fixed time level.

\begin{remark}
The residual $\mathcal R^n(w)$ is defined at the fixed time level $t_n$ and is used only for the spatial a posteriori error estimator. It is not the residual associated with a piecewise linear reconstruction in time between $p_h^{n-1}$ and $p_h^n$. All terms in $\mathcal R^n(w)$ are evaluated on the current mesh $\Tau_{h,n}$. If $\Tau_{h,n-1}\neq \Tau_{h,n}$, then $p_h^{n-1}$ is first transferred to the current finite element space by the mesh-transfer operator
$\mathcal I_n:V^{\mathrm{EG}}_{h,k,n-1}\to V^{\mathrm{EG}}_{h,k,n}$, and the time-discrete term is interpreted as
\[
\frac{1}{\delta t}
\left(
p_h^n-\mathcal I_n p_h^{n-1},
w
\right).
\]
Thus, no additional temporal reconstruction is introduced in the present analysis.
\end{remark}

\begin{remark}
The full error with respect to the exact continuous-in-time solution can be decomposed as
\[
p(\cdot,t_n)-p_h^n
=
\bigl(p(\cdot,t_n)-\widetilde p^n\bigr)
+
\bigl(\widetilde p^n-p_h^n\bigr).
\]
{
The estimator developed below concerns the second term, namely the spatial discretization error of the current time-discrete elliptic problem. The first term may contain the temporal consistency error, the error inherited from the previous numerical solution, and, on adaptive meshes, the effect of transferring data between consecutive finite element spaces. These contributions are not estimated separately in the present work.
}
\end{remark}

% : first, the upper bound—known as reliability—gives us a measure of how well the error is controlled, while the lower bound—referred to as efficiency—indicates the sharpness of the error estimate.
% {\color{red}YiYung, (Is this correct? Please modify as you wish)}

\subsection{Reliability}

{To provide the upper bound of the residual for the a posteriori error analysis, we first 
recall some approximation property for EG finite element space: for any $w\in H^2(\Tau_{h})$, there exists $w_h\in V^\text{EG}_{h,k}$ such that 
\begin{align}
    &\|w - w_h\|_{L^2(T)} \leq C_1 h^2_T \|w\|_{H^2(T)},\label{eq:inequality1} \\
    &\|w - w_h\|_{L^2(\gamma)} \leq C_2 h^{3/2}_T \|w\|_{H^2(T)},\label{eq:inequality2} \\
    &\|\nabla \left(w - w_h\right)\|_{L^2(\gamma)} \leq C_3 h^{1/2}_T \|w\|_{H^2(T)},
    \label{eq:inequality3}
\end{align}
for any $T\in\Tau_{h}$ and $\gamma\subset\partial T$, where the constants $C_1, C_2$, and $C_3$ depend on the regularity of mesh. Note that such $w_h\in V^\text{EG}_h$ is constructed mainly through a specific interpolation operator, details of which can be found in 
\cite{LEE_EG_Elliptic_Parabolic} (see equations (3.13) and (3.15)). Similar approximation properties for different finite element spaces can also be found in \cite{verfurth2013posteriori, AINSWORTH19971}, and these approximation properties are frequently used in both a priori and a posteriori error analysis. }

{
We note that such approximations $w_h$ can be constructed different ways, and it may belong to a finite element subspace of $V^\text{EG}_{h,k}$. For instance, selecting the conforming (continuous) part such that $w_h \in V^\text{CG}_{h,k} \subset V^\text{EG}_{h,k}$ is also a valid choice.  For example, the a posteriori error bounds for DG methods in \cite{georgoulis2011posteriori} were derived by choosing $w_h \in V^\text{CG}_{h,k}$ and then employing a suitable projection to proceed with the derivation of a posteriori bounds for an elliptic problem, followed by a parabolic problem.

Here, before we state Theorem \ref{thm:1}, we recall additional tools that quantify the relationship between jump or average values and trace values.
}
\begin{lemma}
For $v \in H^2(\Tau_{h})$, we have
\begin{align}
\sum_{\gamma\in\varepsilon_{h}}\int_\gamma \jump{v}^2 ds \leq 2 \sum_{T\in\Tau_{h}} \int_{\partial T} v^2 ds,\label{eq:jump_estimate}\\
    \sum_{\gamma\in\varepsilon_{h}}\int_\gamma \av{v}^2 ds \leq  \sum_{T\in\Tau_{h}} \int_{\partial T} v^2 ds,\label{eq:avg_estimate}
\end{align}
where $\partial T$ is the boundary of an element $T$.
\end{lemma}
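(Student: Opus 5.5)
The plan is to prove both inequalities edge by edge, and then reassemble the edge sums into element-boundary sums by a counting argument. Each interior edge $\gamma\in\varepsilon_h^I$ is shared by exactly two elements $T_-$ and $T_+$. Each boundary edge belongs to exactly one element. This assumes a conforming partition, with no hanging nodes, so that each interior edge is a full face of both neighbours. Consequently,
\begin{equation*}
\sum_{T\in\Tau_h}\int_{\partial T} v^2\,ds
=\sum_{\gamma\in\varepsilon_h^I}\int_\gamma \Bigl( (v|_{T_-})^2+(v|_{T_+})^2\Bigr)ds
+\sum_{\gamma\in\varepsilon_h^D\cup\varepsilon_h^N}\int_\gamma v^2\,ds .
\end{equation*}
Here the traces are well defined in $L^2(\gamma)$ because $v|_T\in H^2(T)\subset H^1(T)$.

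On an interior edge, write $a=v|_{T_-}$ and $b=v|_{T_+}$. The elementary inequality $(a-b)^2\le 2(a^2+b^2)$ gives
\begin{equation*}
\int_\gamma \jump{v}^2\,ds\le 2\int_\gamma (a^2+b^2)\,ds .
\end{equation*}
For the average, convexity gives $\bigl(\tfrac12 a+\tfrac12 b\bigr)^2\le \tfrac12(a^2+b^2)\le a^2+b^2$, so
\begin{equation*}
\int_\gamma \av{v}^2\,ds\le \int_\gamma (a^2+b^2)\,ds .
\end{equation*}
On a boundary edge the paper sets $\jump{v}=\av{v}=v|_\gamma$. Hence the integrand equals $v^2$, which is bounded by both $2v^2$ and $v^2$.

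Summing over all edges and inserting the counting identity then yields \eqref{eq:jump_estimate} and \eqref{eq:avg_estimate}. In fact the average estimate holds with the sharper constant $1/2$ on interior edges.

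The only step needing care is the counting identity itself, that is, that $\bigcup_{\gamma}\gamma$ partitions each $\partial T$ up to sets of measure zero. For meshes with hanging nodes I would instead decompose each element boundary into the subedges of $\varepsilon_h$. The same argument then goes through, because each subedge is still adjacent to at most two elements.
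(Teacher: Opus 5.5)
Your argument is correct. The paper gives no proof of its own and only cites Lemma~2.27 of \cite{dolejvsi2015discontinuous}; your edge-by-edge bounds $(a-b)^2\le 2(a^2+b^2)$ and $\tfrac14(a+b)^2\le\tfrac12(a^2+b^2)$, together with $\jump{v}=\av{v}=v$ on boundary edges and the fact that each interior edge appears twice in $\sum_{T}\int_{\partial T}v^2\,ds$, form the standard elementary proof of that fact. Your caveat is also right: on meshes with hanging nodes, $\varepsilon_h$ must be read as the set of intersections $\partial T_-\cap\partial T_+$, which is the convention under which both the counting identity and the paper's definition of jumps make sense.
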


\begin{proof}
See Lemma 2.27 in \cite{dolejvsi2015discontinuous}.
\end{proof}

{Then, we have the following theorem for the reliability.}
\begin{theorem}\label{thm:1} 
For $\theta \geq 0$, let $p^n_h$ and $\mathcal R^n$ be given by \eqref{eq:fullly_EG_discretization} and  \eqref{eq:residual}. Then,  the following upper bound for the residual holds:
\begin{equation}
\begin{split}
\|\mathcal R^n\|^2_* &\leq C_1 \sum_{T\in\Tau_{h,n}} (\eta^n_1)^2 
    + \hat{C}_2 \left(\sum_{\gamma\in\varepsilon_{h,n}^I} (\eta^n_2)^2 
        + \sum_{\gamma\in\varepsilon_{h,n}^N} (\eta^n_3)^2\right)
        + \theta \hat{C}_3\left(\sum_{\gamma\in\varepsilon^{I}_{h,n}} (\eta^n_4)^2
        + \sum_{\gamma\in \varepsilon_{h,n}^D} (\eta_5^n)^2\right)\\
        &\quad + \alpha \sqrt{2} \hat{C}_2 \left(\sum_{\gamma\in\varepsilon^I_{h,n}} (\eta^n_4)^2
        + \sum_{\gamma\in\varepsilon^D_{h,n}} (\eta_5^n)^2\right),
\end{split}
\end{equation}
where
\begin{align}
&\eta^n_1 = h^2_{T}\|f^n + \nabla \cdot (K\nabla p^n_h) - \dfrac{1}{\delta t}(p^n_h - \mathcal I_n p^{n-1}_h)\|_{L^2(T)}, \label{eq:eta1} \\
&\eta^n_2 = h^{3/2}_{\gamma} \left\|\jump{\mathbf n_\gamma\cdot (K\nabla p^n_h)}\right\|_{L^2(\gamma)}, \label{eq:eta2} \\
&\eta^n_3 = h^{3/2}_{\gamma} \|g^n_N+{\mathbf n_\gamma\cdot K \nabla p^n_h}
        \|_{L^2(\gamma)}, \label{eq:eta3}\\
&{\eta^n_4 = K_{\mathrm{max}(\gamma)} h^{1/2}_{\gamma} \|\jump{p^n_h}\|_{L^2(\gamma)}}, \label{eq:eta4}\\
& \eta^n_5 = K_{\mathrm{max}(\gamma)} h^{1/2}_{\gamma} \|g^n_D-p^n_h\|_{L^2(\gamma)}, 
\label{eq:eta5}
\end{align}
where $C_1$, $C_2$, and $C_3$ are from \eqref{eq:inequality1}-\eqref{eq:inequality3}, and $\hat{C}_2=C_2 \sigma^{3/2}$, and ${\hat{C}_3=C_3 d \sigma^{1/2}}$. 

\end{theorem}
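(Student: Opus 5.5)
We argue by the standard residual technique, combining the orthogonality identity \eqref{eq:orthogonality} with the approximation properties \eqref{eq:inequality1}--\eqref{eq:inequality3}. Fix $0\neq w\in H^2(\Tau_{h,n})$ and let $w_h\in V^{\mathrm{EG}}_{h,k,n}$ be the EG approximation of $w$ from \eqref{eq:inequality1}--\eqref{eq:inequality3}. Set $v:=w-w_h$, so that $\mathcal R^n(w)=\mathcal R^n(v)$. Write out $\mathcal R^n(v)=F^n_\theta(v)-S_\theta(p^n_h,v)$ using \eqref{eq:S_theta}, \eqref{eq:F_theta_n}, \eqref{eq:A_theta} and \eqref{eq:F_theta}. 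On each element $T$, integrate $(K\nabla p^n_h,\nabla v)_T$ by parts.

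After collecting the element boundary terms edge by edge, we use the identity $\jump{ab}=\av{a}\jump{b}+\jump{a}\av{b}$ on interior edges. The consistency term $-\av{\mathbf n_\gamma\cdot K\nabla p_h^n}\jump{v}$ then cancels against the corresponding part of the integration by parts. This gives the representation
\begin{align*}
\mathcal R^n(v)&=\sum_{T}\Bigl(f^n+\nabla\cdot(K\nabla p^n_h)-\tfrac{1}{\delta t}(p^n_h-\mathcal I_np^{n-1}_h),v\Bigr)_T
-\sum_{\gamma\in\varepsilon^I_{h,n}}\bigl(\jump{\mathbf n_\gamma\cdot K\nabla p^n_h},\av{v}\bigr)_\gamma
-\sum_{\gamma\in\varepsilon^N_{h,n}}\bigl(g^n_N+\mathbf n_\gamma\cdot K\nabla p^n_h,v\bigr)_\gamma\\
&\quad+\theta\sum_{\gamma\in\varepsilon^{I}_{h,n}}\bigl(-\jump{p^n_h},\av{\mathbf n_\gamma\cdot K\nabla v}\bigr)_\gamma
+\theta\sum_{\gamma\in\varepsilon^{D}_{h,n}}\bigl(g^n_D-p^n_h,\mathbf n_\gamma\cdot K\nabla v\bigr)_\gamma\\
&\quad+\alpha\sum_{\gamma\in\varepsilon^{I}_{h,n}}h_\gamma^{-1}K_{\max}(\gamma)\bigl(-\jump{p^n_h},\jump{v}\bigr)_\gamma
+\alpha\sum_{\gamma\in\varepsilon^{D}_{h,n}}h_\gamma^{-1}K_{\max}(\gamma)\bigl(g^n_D-p^n_h,v\bigr)_\gamma .
\end{align*}
Signs must be tracked against the convention $\mathbf n_\gamma$ from $T_-$ to $T_+$ and the sign of $g_N$, but only absolute values matter afterwards. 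This representation is the identity that produces exactly the five indicators $\eta_1^n,\dots,\eta_5^n$.

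Each group is then bounded by Cauchy--Schwarz, multiplying and dividing by the appropriate powers of $h$.
\begin{itemize}
\item \emph{Element term.} \eqref{eq:inequality1} bounds it by $C_1\eta^n_1\|w\|_{H^2(T)}$; note $\eta^n_1$ carries the factor $h_T^2$.
\item \emph{Flux-jump and Neumann terms.} Use \eqref{eq:avg_estimate} to pass from $\av{v}$ to traces on $\partial T$, then apply \eqref{eq:inequality2}. The conversion $h_T^{3/2}\le \sigma^{3/2}h_\gamma^{3/2}$, from the comparability of $h_T$ and $h_\gamma$, yields the factor $\hat C_2$.
\item \emph{$\theta$-terms.} Apply \eqref{eq:avg_estimate} to $K\nabla v$, bound $|\mathbf n_\gamma\cdot K\nabla v|$ by $\kappa_{\max}$ times a sum of $d$ components, and use \eqref{eq:inequality3}. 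This gives $\theta\hat C_3(\eta_4^n+\eta_5^n)$.
\item \emph{Penalty terms.} Use \eqref{eq:jump_estimate}, which accounts for the $\sqrt2$, together with \eqref{eq:inequality2}. The factor $h_\gamma^{-1}h_T^{3/2}\lesssim h_\gamma^{1/2}$ matches the $h_\gamma^{1/2}$ weight in $\eta^n_4,\eta^n_5$.
\end{itemize}
Summing, applying the discrete Cauchy--Schwarz inequality over elements and edges, and using the finite overlap of element patches gives $|\mathcal R^n(w)|\le(\cdots)^{1/2}\|w\|_{H^2(\Tau_{h,n})}$. Dividing by the norm, taking the supremum and squaring yields the stated bound. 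The $\theta$-terms enter with coefficient $\theta\ge0$, which is where the restriction $\theta\ge0$ is used to write the bound with nonnegative weights.

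The main obstacle is the $\theta$-term involving $\av{\mathbf n_\gamma\cdot K\nabla v}$. It requires a gradient trace estimate \eqref{eq:inequality3} for $w-w_h$ with $w$ only broken-$H^2$, and the $w_h$ from \cite{LEE_EG_Elliptic_Parabolic} must satisfy all three estimates simultaneously on every edge of $T$. I would first verify that the EG interpolant, with its continuous part built from averaged nodal values and its piecewise-constant enrichment fixing the element means, gives local bounds with constants depending only on $\sigma$. Failing that, I would fall back on the remark after \eqref{eq:inequality3} and take $w_h\in V^{\mathrm{CG}}_{h,k}$. Since the paper's estimates are stated per element, I would accept them as given and only track how the constants $\sigma^{3/2}$, $\sigma^{1/2}$ and $d$ propagate.
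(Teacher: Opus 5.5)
Your proposal follows the paper's proof in outline. It uses the same seven-term representation \eqref{eq:identity}, the same groupings (element term; flux jump with Neumann; the two $\theta$-terms; the two penalty terms) bounded through \eqref{eq:avg_estimate}, \eqref{eq:jump_estimate}, \eqref{eq:inequality1}--\eqref{eq:inequality3} and shape regularity, and the same constants $C_1,\hat C_2,\theta\hat C_3,\alpha\sqrt2\hat C_2$. Your edge-by-edge placement of the $h$-weights before the global Cauchy--Schwarz is in fact cleaner than the paper's step of moving $h_T^3$ from one global sum into the other.

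Two caveats come from the paper rather than from you.

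\emph{Squaring.} The argument gives $\|\mathcal R^n\|_*\le E_1+E_2+E_3+E_4$. Squaring this yields the displayed bound only up to a generic factor (squared constants times, e.g., $4$), not with the constants as written.

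\emph{Approximation estimates.} Your worry about \eqref{eq:inequality1}--\eqref{eq:inequality3} is justified, because jumps of EG functions are constant along each edge. Take $d=2$, let $\gamma$ be a vertical edge shared by two squares, and set $w=\pm(y-y_\gamma)$ on those squares and $w=0$ elsewhere. Every EG (a fortiori CG) $w_h$ then satisfies $\|\jump{w-w_h}\|_{L^2(\gamma)}\ge 2\|y-y_\gamma\|_{L^2(\gamma)}\sim h^{3/2}$. In contrast, \eqref{eq:inequality2} with $\|w\|_{H^2(T)}\sim h$ would force $O(h^{5/2})$. Hence neither your EG check nor the $V^{\mathrm{CG}}_{h,k}$ fallback can succeed on all of $H^2(\Tau_{h,n})$; these estimates hold for continuous test functions such as $w\in H^2(\Omega)$ (e.g.\ via Lagrange interpolation).
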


\begin{proof}
The proof begins with \eqref{eq:residual} and refers to the expressions \eqref{eq:F_theta} - \eqref{eq:S_theta}. After rearranging the terms, we have the following.
\begin{equation}\label{upper1} 
\begin{split}
\mathcal R^n(w)
&= F^n_\theta(w) - S_\theta (p^n_h, w) \\
&= {\sum_{T\in\Tau_{h,n}} \left(f^n  -
        \dfrac{1}{\delta t} (p^n_h - \mathcal I_n p^{n-1}_h),w\right)_T - \sum_{T\in\Tau_{h,n}} \left(K\nabla p^n_h, \nabla w\right)_T + \sum_{\gamma \in \varepsilon^{ID}_{h,n}} \left(\av{\mathbf n_\gamma\cdot K\nabla p^n_h},\jump{w}\right)_\gamma }\\
&- \theta \sum_{\gamma \in \varepsilon^{ID}_{h,n}}\left(\jump{p^n_h},\av{\mathbf n_\gamma\cdot K\nabla w}
    \right)_\gamma
    - {\alpha \sum_{\gamma\in\varepsilon^{ID}_{h,n}} h_\gamma^{-1} K_{\mathrm{max}(\gamma)}\left(\jump{p^n_h},\jump{w}\right)_\gamma}     \\
& - \sum_{\gamma \in \varepsilon^N_{h,n}}(g^n_N, w)_\gamma
    +\theta \sum_{\gamma\in \varepsilon_{h,n}^D}(g^n_D,\mathbf n_\gamma\cdot K\nabla w)_\gamma + {\alpha \sum_{\gamma\in \varepsilon_{h,n}^D}h_\gamma^{-1} K_{\mathrm{max}(\gamma)}(g^n_D, w)_\gamma}.
    \end{split}
\end{equation}
Because of the following relation, 
\begin{equation*}
\begin{split}
    \sum_{T\in\Tau_{h,n}} \left(K\nabla p^n_h, \nabla w\right)_T &= {\sum_{T\in\Tau_{h,n}} \left(-\nabla \cdot K\nabla p^n_h, w\right)_T} + \sum_{T\in\Tau_{h,n}} \left(\mathbf n_T\cdot K\nabla p^n_h, w\right)_{\partial T}\\
    &= \sum_{T\in\Tau_{h,n}} \left(-\nabla \cdot K\nabla p^n_h, w\right)_T + \sum_{\gamma \in \varepsilon_{h,n}^I} \left[\left(\av{\mathbf n_\gamma\cdot K \nabla p^n_h}, \jump{w}\right)_\gamma +  \left(\jump{\mathbf n_\gamma\cdot K \nabla p^n_h}, \av{w}\right)_\gamma\right]\\
    & + \sum_{\gamma \in \varepsilon_{h,n}^D} \left({\mathbf n_\gamma\cdot K \nabla p^n_h}, {w}\right)_\gamma + \sum_{\gamma \in \varepsilon_{h,n}^N} \left({\mathbf n_\gamma\cdot K \nabla p^n_h}, {w}\right)_\gamma, 
\end{split}
\end{equation*}
{with} the Galerkin orthogonality \eqref{eq:orthogonality}, the above expression \eqref{upper1} becomes
{\begin{equation}\label{eq:identity}
    \begin{split}
        \mathcal R^n(w-w_h) &= F^n_\theta(w-w_h) - S_\theta (p^n_h, w-w_h) \\
        &= \sum_{T\in\Tau_{h,n}} \left(f^n + \nabla \cdot K\nabla p^n_h - \dfrac{1}{\delta t}(p^n_h - \mathcal I_n p^{n-1}_h),w-w_h \right)_T \quad  \\ 
        &\quad - \sum_{\gamma\in\varepsilon^I_{h,n}}\left(\jump{\mathbf n_\gamma\cdot K\nabla p^n_h}, \av{w-w_h}\right)_\gamma \quad\\
        &\quad - \theta \sum_{\gamma \in \varepsilon^{I}_{h,n}}\left(\jump{p^n_h},\av{\mathbf n_\gamma\cdot K\nabla (w-w_h)}
    \right)_\gamma \quad  \\
    &\quad +\theta \sum_{\gamma\in \varepsilon_{h,n}^D}(g^n_D-p^n_h,\mathbf n_\gamma\cdot K\nabla (w-w_h))_\gamma \quad 
    \\
    &{\quad -\alpha \sum_{\gamma\in\varepsilon^{I}_{h,n}} h_\gamma^{-1} K_{\mathrm{max}(\gamma)} \left(\jump{p^n_h},\jump{w-w_h}\right)_\gamma \quad} 
     \\
     &{\quad + \alpha \sum_{\gamma\in \varepsilon_{h,n}^D}h_\gamma^{-1} K_{\mathrm{max}(\gamma)} (g^n_D-p^n_h, w-w_h)_\gamma \quad}  \\
     &\quad - \sum_{\gamma \in \varepsilon^N_{h,n}}(g^n_N+{\mathbf n_\gamma\cdot K \nabla p^n_h}, w-w_h)_\gamma \quad  \\
     &:=\sum_{i=1}^7 \xi_i
    \end{split}
\end{equation}

In the following, we estimate each term $\xi_i$, $i=1,\dots,7$. 
First, by using the Cauchy-Schwarz inequality and \eqref{eq:inequality1}, $\xi_1$ has the upper bound as
\begin{equation}\label{eq:N1}
    \begin{split}
        |\xi_1| &\leq \sum_{T\in\Tau_{h,n}} \|f^n + \nabla \cdot K\nabla p^n_h - \dfrac{1}{\delta t}(p^n_h -\mathcal I_n p^{n-1}_h)\|_{L^2(T)}\|w - w_h\|_{L^2(T)}\\
        &\leq \sum_{T\in\Tau_{h,n}} \|f^n + \nabla \cdot K\nabla p^n_h - \dfrac{1}{\delta t}(p^n_h -\mathcal I_n p^{n-1}_h)\|_{L^2(T)} C_1 h^2_T \|w\|_{H^2(T)} \\
        &=C_1 \sum_{T\in\Tau_{h,n}} h^2_T\|f^n + \nabla \cdot K\nabla p^n_h - \dfrac{1}{\delta t}(p^n_h -\mathcal I_n p^{n-1}_h)\|_{L^2(T)}  \|w\|_{H^2(T)} \\
        & \leq  C_1 \left(\sum_{T\in\Tau_{h,n}}  h^4_T\|f^n + \nabla \cdot K\nabla p^n_h - \dfrac{1}{\delta t}(p^n_h - \mathcal I_n p^{n-1}_h)\|^2_{L^2(T)}  \right)^{1/2}
        \left(\sum_{T\in\Tau_{h,n}} \|w\|^2_{H^2(T)}\right)^{1/2}
    \end{split}
\end{equation}

For the rest $\xi_i$, $i=2,\dots,7$, by the Cauchy-Schwarz inequality, we estimate them as follows. 
\begin{equation}
    \begin{split}
        |\xi_2| 
        %& \leq \sum_{\gamma\in\varepsilon_{h,n}^I} \left\|\jump{\mathbf n_\gamma\cdot K\nabla p^n_h}\right\|_{L^2(\gamma)}\|\av{w - w_h}\|_{L^2(\gamma)}\\
        & \leq \left(\sum_{\gamma\in\varepsilon_{h,n}^I} \left\|\jump{\mathbf n_\gamma\cdot K\nabla p^n_h}\right\|_{L^2(\gamma)}^2\right)^{1/2}
    \left(\sum_{\gamma\in\varepsilon_{h,n}^I} \left\|\av{w - w_h}\right\|_{L^2(\gamma)}^2\right)^{1/2}, 
    \end{split}
\end{equation}

\begin{equation}
\begin{split}
    |\xi_3| 
    %&\leq \theta \sum_{\gamma\in\varepsilon_{h,n}^I} \|\jump{p^n_h}\|_{L^2(\gamma)} \| \av{\mathbf n_\gamma\cdot K\nabla \left(w - w_h\right) } \|_{L^2(\gamma)}\\
    &\leq \theta
    \left(\sum_{\gamma \in \varepsilon^{I}_{h,n}}
    \|\jump{p^n_h}\|_{L^2(\gamma)}^2
    \right)^{1/2}
    \left(\sum_{\gamma \in \varepsilon^{I}_h}
    \|\av{\mathbf n_\gamma\cdot K\nabla \left(w - w_h\right)}
    \|_{L^2(\gamma)}^2
    \right)^{1/2} ,
\end{split}
\end{equation}

\begin{equation}
\begin{split}
    |\xi_4| 
    %&\leq \theta \sum_{\gamma\in \varepsilon_{h,n}^D}
    %\|g^n_D-p^n_h\|_{L^2(\gamma)}
    %\|
    %\mathbf n_\gamma\cdot K\nabla\left(w - w_h\right)
    %\|_{L^2(\gamma)}\\
    &\leq \theta
    \left(\sum_{\gamma\in \varepsilon_{h,n}^D}
    \|g^n_D-p^n_h\|^2_{L^2(\gamma)}
    \right)^{1/2}
    \left(\sum_{\gamma\in \varepsilon_{h,n}^D}
    \|
    \mathbf n_\gamma\cdot K\nabla\left(w - w_h\right)
    \|_{L^2(\gamma)}^2
    \right)^{1/2}  , 
\end{split}
\end{equation}

\begin{equation}
    \begin{split}
        |\xi_5| 
        %& \leq
        %\alpha \sum_{\gamma\in\varepsilon^I_h} h_{\gamma}^{-1} K_{\mathrm{max}(\gamma)}\|\jump{p^n_h}\|_{L^2(\gamma)} \|\jump{w-w_h}\|_{L^2(\gamma)}\\
        & \leq
        \alpha \left(\sum_{\gamma\in\varepsilon^I_h} h_{\gamma}^{-2} K^2_{\mathrm{max}(\gamma)}\|\jump{p^n_h}\|^2_{L^2(\gamma)}\right)^{1/2} \left(\sum_{\gamma\in\varepsilon^I_h}\|\jump{w-w_h}\|^2_{L^2(\gamma)}\right)^{1/2}, 
    \end{split}
\end{equation}

\begin{equation}
    \begin{split}
        |\xi_6| 
        %& \leq
        %\alpha \sum_{\gamma\in\varepsilon^D_{h,n}} h_{\gamma}^{-1} K_{\mathrm{max}(\gamma)} \|g^n_D - p^n_h\|_{L^2(\gamma)} \|{w-w_h}\|_{L^2(\gamma)}\\
        & \leq
        \alpha \left(\sum_{\gamma\in\varepsilon^D_{h,n}} h_{\gamma}^{-2} K^2_{\mathrm{max}(\gamma)}\|g^n_D - p^n_h\|^2_{L^2(\gamma)}\right)^{1/2} \left(\sum_{\gamma\in\varepsilon^D_{h,n}}\|{w-w_h}\|^2_{L^2(\gamma)}\right)^{1/2} , 
    \end{split}
\end{equation}
and
\begin{equation}
    \begin{split}
        |\xi_7| 
        %&\leq  \sum_{\gamma\in\varepsilon_{h,n}^N}
        %\|g^n_N+{\mathbf n_\gamma\cdot K \nabla p^n_h}
        %\|_{L^2(\gamma)}
        %\|
        %w - w_h
        %\|_{L^2(\gamma)}\\
        &\leq \left(
        \sum_{\gamma\in\varepsilon_{h,n}^N}
        \|g^n_N+{\mathbf n_\gamma\cdot K \nabla p^n_h}
        \|_{L^2(\gamma)}^2
        \right)^{1/2} 
        \left(
        \sum_{\gamma\in\varepsilon_{h,n}^N}
        \|
        w - w_h
        \|_{L^2(\gamma)}^2
        \right)^{1/2} . 
    \end{split}
\end{equation}
% \phantom{a} \\ \indent 

The estimations will be done by collecting similar $|\xi_i|$, $i=1,\dots,7$. The first pair is $\xi_2$ and $\xi_7$. By the Cauchy-Schwarz inequality, \eqref{eq:avg_estimate}, \eqref{eq:inequality2}, and shape regularity $h_T / \rho_T \leq \sigma$, we obtain
\begin{equation}\label{eq:N2+N7}
    \begin{split}
        |\xi_2|+|\xi_7| &\leq {\left(\sum_{\gamma\in\varepsilon_{h,n}^I} \left\|\jump{\mathbf n_\gamma\cdot K\nabla p^n_h}\right\|_{L^2(\gamma)}^2\right)^{1/2}
        \left(\sum_{\gamma\in\varepsilon_{h,n}^I} \left\|\av{w - w_h}\right\|_{L^2(\gamma)}^2\right)^{1/2}} \\
        &\quad +{\left(
        \sum_{\gamma\in\varepsilon_{h,n}^N}
        \|g^n_N+{\mathbf n_\gamma\cdot K \nabla p^n_h}
        \|_{L^2(\gamma)}^2
        \right)^{1/2} 
        \left(
        \sum_{\gamma\in\varepsilon_{h,n}^N}
        \|
        w - w_h
        \|_{L^2(\gamma)}^2
        \right)^{1/2}} \\
        &\leq \left(
        \sum_{\gamma\in\varepsilon_{h,n}^I} \left\|\jump{\mathbf n_\gamma\cdot K\nabla p^n_h}\right\|_{L^2(\gamma)}^2+
        \sum_{\gamma\in\varepsilon_{h,n}^N}
        \|g^n_N+{\mathbf n_\gamma\cdot K \nabla p^n_h}
        \|_{L^2(\gamma)}^2
        \right)^{1/2} \left(\sum_{\gamma\in\varepsilon_{h,n}}
        \|
        \av{w - w_h}
        \|_{L^2(\gamma)}^2
        \right)^{1/2}\\
        &\leq \left(
        \sum_{\gamma\in\varepsilon_{h,n}^I} \left\|\jump{\mathbf n_\gamma\cdot K\nabla p^n_h}\right\|_{L^2(\gamma)}^2+
        \sum_{\gamma\in\varepsilon_{h,n}^N}
        \|g^n_N+{\mathbf n_\gamma\cdot K \nabla p^n_h}
        \|_{L^2(\gamma)}^2
        \right)^{1/2} \left(\sum_{T \in \Tau_{h,n}}
        \|
        w - w_h
        \|_{L^2(\partial T)}^2
        \right)^{1/2}\\
        &\leq  \left(
        \sum_{\gamma\in\varepsilon_{h,n}^I} \left\|\jump{\mathbf n_\gamma\cdot K\nabla p^n_h}\right\|_{L^2(\gamma)}^2+
        \sum_{\gamma\in\varepsilon_{h,n}^N}
        \|g^n_N+{\mathbf n_\gamma\cdot K \nabla p^n_h}
        \|_{L^2(\gamma)}^2
        \right)^{1/2}
        \left(\sum_{T\in\Tau_{h,n}} 
        C^2_2 h^{3}_T \|w\|_{H^2(T)}^2
        \right)^{1/2}\\
        &\leq \hat{C}_2 \left(
        \sum_{\gamma\in\varepsilon_{h,n}^I} h^{3}_{\gamma} \left\|\jump{\mathbf n_\gamma\cdot K\nabla p^n_h}\right\|_{L^2(\gamma)}^2+
        \sum_{\gamma\in\varepsilon_{h,n}^N} 
        h^{3}_{\gamma} \|g^n_N+{\mathbf n_\gamma\cdot K \nabla p^n_h}
        \|_{L^2(\gamma)}^2
        \right)^{1/2}
         \left(\sum_{T\in\Tau_{h,n}} 
        \|w\|_{H^2(T)}^2
        \right)^{1/2}, 
    \end{split}
\end{equation}
where $\hat{C}_2 = C_2 \sigma^{3/2}$. 
\phantom{a} 

For the second pair $\xi_3$ and $\xi_4$, we use the Cauchy-Schwarz inequality, \eqref{eq:avg_estimate}, \eqref{eq:inequality3}, and shape regularity to have  
\begin{equation}\label{eq:N3+N4}
    \begin{split}
        |\xi_3| + |\xi_4| &\leq 
\theta
    \left(\sum_{\gamma \in \varepsilon^{I}_{h,n}}
    \|\jump{p^n_h}\|_{L^2(\gamma)}^2
    \right)^{1/2}
    \left(\sum_{\gamma \in \varepsilon^{I}_{h,n}}
    \|\av{\mathbf n_\gamma\cdot K\nabla \left(w - w_h\right) }\|_{L^2(\gamma)}^2
    \right)^{1/2}\\
        &\quad +\theta
    \left(\sum_{\gamma\in \varepsilon_{h,n}^D}
    \|g^n_D-p^n_h\|^2_{L^2(\gamma)}
    \right)^{1/2}
    \left(\sum_{\gamma\in \varepsilon_{h,n}^D}
    \|\mathbf n_\gamma\cdot K\nabla\left(w - w_h\right)\|_{L^2(\gamma)}^2
    \right)^{1/2}\\
    &\leq
    \theta \left(
    \sum_{\gamma \in \varepsilon^{I}_{h,n}}
    \|\jump{p^n_h}\|_{L^2(\gamma)}^2 
    + 
    \sum_{\gamma\in \varepsilon_{h,n}^D}
    \|g^n_D-p^n_h\|^2_{L^2(\gamma)}\right)^{1/2} 
\left(
\sum_{\gamma \in \varepsilon^{ID}_{h,n}}
    \|\mathbf n_\gamma \cdot \av{K\nabla \left(w - w_h\right) }\|_{L^2(\gamma)}^2 
\right)^{1/2}\\
&\leq
    \theta \left(
    \sum_{\gamma \in \varepsilon^{I}_{h,n}}
    \|\jump{p^n_h}\|_{L^2(\gamma)}^2 
    + 
    \sum_{\gamma\in \varepsilon_{h,n}^D}
    \|g^n_D-p^n_h\|^2_{L^2(\gamma)}\right)^{1/2} 
\left(
\sum_{\gamma \in \varepsilon^{ID}_{h,n}}
    \|\av{K\nabla \left(w - w_h\right) }\|_{L^2(\gamma)}^2 
\right)^{1/2}\\
&\leq
    \theta \left(
    \sum_{\gamma \in \varepsilon^{I}_{h,n}}
    \|\jump{p^n_h}\|_{L^2(\gamma)}^2 
    + 
    \sum_{\gamma\in \varepsilon_{h,n}^D}
    \|g^n_D-p^n_h\|^2_{L^2(\gamma)}\right)^{1/2} 
\left(
\sum_{\gamma \in \varepsilon^{ID}_{h,n}}
    d^2 K^2_{\mathrm{max}(\gamma)}\|\av{\nabla \left(w - w_h\right) }\|_{L^2(\gamma)}^2 
\right)^{1/2}\\
&\leq \theta d
\left(
    \sum_{\gamma \in \varepsilon^{I}_{h,n}}
     K^2_{\mathrm{max}(\gamma)}\|\jump{p^n_h}\|_{L^2(\gamma)}^2 
    + 
    \sum_{\gamma\in \varepsilon_{h,n}^D}
    K^2_{\mathrm{max}(\gamma)} \|g^n_D-p^n_h\|^2_{L^2(\gamma)}\right)^{1/2} \left(\sum_{T\in\Tau_{h,n}} 
    \|\nabla (w - w_h)\|^2_{L^2(\partial T)}
    \right)^{1/2}\\
    &\leq \theta d
\left(
    \sum_{\gamma \in \varepsilon^{I}_{h,n}}
    K^2_{\mathrm{max}(\gamma)}\|\jump{p^n_h}\|_{L^2(\gamma)}^2 
    + 
    \sum_{\gamma\in \varepsilon_{h,n}^D}
    K^2_{\mathrm{max}(\gamma)}\|g^n_D-p^n_h\|^2_{L^2(\gamma)}\right)^{1/2} \left(\sum_{T\in\Tau_{h,n}} 
     C^2_3 h_T\|w\|^2_{H^2( T)}
    \right)^{1/2}\\
     &\leq \theta \hat{C}_3
\left( 
    \sum_{\gamma \in \varepsilon^{I}_{h,n}}
    K^2_{\mathrm{max}(\gamma)}  h_{\gamma} \|\jump{p^n_h}\|_{^2(\gamma)}^2 
    + 
    \sum_{\gamma\in \varepsilon_{h,n}^D}
    K^2_{\mathrm{max}(\gamma)}  h_{\gamma} \|g^n_D-p^n_h\|^2_{H^2(\gamma)}\right)^{1/2}\left(\sum_{T\in\Tau_{h,n}} 
    \|w\|^2_{H^2( T)}
    \right)^{1/2}, 
    \end{split}
\end{equation}
where $\hat{C}_3 = C_3 d \sigma^{1/2}$. 

By the Cauchy-Schwarz inequality, \eqref{eq:jump_estimate}, \eqref{eq:inequality2}, and shape regularity, the last pair $\xi_5$ and $\xi_6$ is estimated as 
\begin{equation}\label{eq:N5+N6}
    \begin{split}
        |\xi_5|+|\xi_6| &\leq \alpha \left(\sum_{\gamma\in\varepsilon^I_{h,n}} h_{\gamma}^{-2} K^2_{\mathrm{max}(\gamma)}\|\jump{p^n_h}\|^2_{L^2(\gamma)}\right)^{1/2} \left(\sum_{\gamma\in\varepsilon^I_{h,n}}\|\jump{w-w_h}\|^2_{L^2(\gamma)}\right)^{1/2} \\
        &+ 
        \alpha \left(\sum_{\gamma\in\varepsilon^I_{h,n}} h_{\gamma}^{-2} K^2_{\mathrm{max}(\gamma)}\|g^n_D - p^n_h\|^2_{L^2(\gamma)}\right)^{1/2} \left(\sum_{\gamma\in\varepsilon^I_{h,n}}\|{w-w_h}\|^2_{L^2(\gamma)}\right)^{1/2}\\
        \leq &  \alpha
        \left(\sum_{\gamma\in\varepsilon^I_{h,n}} h_{\gamma}^{-2} K^2_{\mathrm{max}(\gamma)} \|\jump{p^n_h}\|^2_{L^2(\gamma)}
        +
        \sum_{\gamma\in\varepsilon^D_{h,n}} h_{\gamma}^{-2} K^2_{\mathrm{max}(\gamma)} \|g^n_D - p^n_h\|^2_{L^2(\gamma)}
        \right)^{1/2} \left(\sum_{\gamma\in\varepsilon_{h,n}} \|\jump{w-w_h}\|^2_{L^2(\gamma)}\right)
        \\
        \leq & 
        \alpha  \left(\sum_{\gamma\in\varepsilon^I_{h,n}} h_{\gamma}^{-2} K^2_{\mathrm{max}(\gamma)} \|\jump{p^n_h}\|^2_{L^2(\gamma)}
        +
        \sum_{\gamma\in\varepsilon^D_{h,n}} h_{\gamma}^{-2} K^2_{\mathrm{max}(\gamma)} \|g^n_D - p^n_h\|^2_{L^2(\gamma)}
        \right)^{1/2}  \left(\sum_{T\in\Tau_{h,n}}2 \|w - w_h\|^2_{L^2(\partial T)}\right)^{1/2}\\
        \leq & 
        \alpha  \left(\sum_{\gamma\in\varepsilon^I_{h,n}} h_{\gamma}^{-2} K^2_{\mathrm{max}(\gamma)} \|\jump{p^n_h}\|^2_{L^2(\gamma)}
        +
        \sum_{\gamma\in\varepsilon^D_{h,n}} h_{\gamma}^{-2} K^2_{\mathrm{max}(\gamma)} \|g^n_D - p^n_h\|^2_{L^2(\gamma)}
        \right)^{1/2}  \left(\sum_{T\in\Tau_{h,n}}2 C_2^2h^3_T\|w\|^2_{H^2(T)}\right)^{1/2}\\
        \leq & 
        \alpha \sqrt{2} \hat{C}_2 \left(\sum_{\gamma\in\varepsilon^I_{h,n}} K^2_{\mathrm{max}(\gamma)} h_{\gamma} \|\jump{p^n_h}\|^2_{L^2(\gamma)}
        +
        \sum_{\gamma\in\varepsilon^D_{h,n}} K^2_{\mathrm{max}(\gamma)} h_{\gamma} \|g^n_D - p^n_h\|^2_{L^2(\gamma)}
        \right)^{1/2} \left(\sum_{T\in\Tau_{h,n}}\|w\|^2_{H^2(T)}\right)^{1/2}.
    \end{split}
\end{equation}

Finally, collecting \eqref{eq:N1} for $|\xi_1|$, \eqref{eq:N2+N7} for $|\xi_2|+|\xi_7|$, \eqref{eq:N3+N4} for $|\xi_3|+|\xi_4|$ , and
    \eqref{eq:N5+N6} for $|\xi_5|+|\xi_6|$,
we have 
\begin{equation*}
    \dfrac{\mathcal R^n(w)}{\|w\|_{H^2(\Tau_{h,n})}} =  \dfrac{\mathcal R^n(w-w_h)}{\|w\|_{H^2(\Tau_{h,n})}}\leq \sum_{i=1}^4 E_i,\qquad \forall w\in H^2(\Tau_{h,n}),
\end{equation*}
where 
\begin{align}
    E_1 &:= C_1  \left(\sum_{T\in\Tau_h} h^4_T \|f^n + \nabla \cdot (K\nabla p^n_h) - \dfrac{1}{\delta t}(p^n_h - \mathcal I_n p^{n-1}_h)\|^2_{L^2(T)}  \right)^{1/2},\\
    E_2 &:= \hat{C}_2\left(
        \sum_{\gamma\in\varepsilon_h^I}  h^{3}_\gamma \left\|\jump{\mathbf n_\gamma\cdot (K\nabla p^n_h)}\right\|_{L^2(\gamma)}^2+
        h^{3}_\gamma \sum_{\gamma\in\varepsilon_h^N} 
        \|g_N^n+{\mathbf n_\gamma\cdot K \nabla p^n_h}
        \|_{L^2(\gamma)}^2
        \right)^{1/2},\\
    E_3 &:= {\theta \hat{C}_3
\left(
    \sum_{\gamma \in \varepsilon^{I}_h}
    K^2_{\mathrm{max}(\gamma)} h_\gamma  \|\jump{p^n_h}\|_{L^2(\gamma)}^2 
    + 
    \sum_{\gamma\in \varepsilon_h^D} K^2_{\mathrm{max}(\gamma)} h_\gamma
    \|g_D^n-p^n_h\|^2_{L^2(\gamma)}\right)^{1/2},}\\
    E_4 &= {
        \alpha \sqrt{2} \hat{C}_2   \left(\sum_{\gamma\in\varepsilon^I_h}  K^2_{\mathrm{max}(\gamma)} h_{\gamma} \|\jump{p^n_h}\|^2_{L^2(\gamma)}
        +
        \sum_{\gamma\in\varepsilon^D_h}K^2_{\mathrm{max}(\gamma)} h_{\gamma}  \|g_D^n - p^n_h\|^2_{L^2(\gamma)}
        \right)^{1/2}.}
\end{align}
}
\end{proof}

\subsection{Efficiency}
{
Next, we establish the {efficiency}, which provides a local lower bound for the residual. These bounds demonstrate that if the residual is small, the error estimator remains controlled, ensuring that mesh refinement is guided effectively and preventing excessive refinement of the local mesh.  

To derive such a local lower bound, we utilize element- or edge-bubble functions, denoted by
{
\(\chi_T \text{ and } \chi_\gamma \), respectively, } 
as described in \cite{verfurth2013posteriori} and \cite{AINSWORTH19971}. These functions are polynomials that continuously vanish to zero on the boundaries of local elements,
{ and \( 0 \leq \chi_T, \chi_\gamma \leq 1 \).}
%With these properties, we can analyze the residual locally.  
We note that for element-bubble functions, \( \chi_T(\mathbf{x}) = 0 \) if \( \mathbf{x} \not\in T \). On the other hand, edge-bubble functions \( \chi_{\gamma} \) take nonzero values on elements that share \( \gamma \), which we denote by \( \tilde{\gamma} = T_- \cup T_+ \). In particular, we define the bubble functions for a given interior, Dirichlet, or Neumann edge as \( \chi_{\gamma,I} \), \( \chi_{\gamma,D} \), and \( \chi_{\gamma,N} \), respectively.  

}

{Here, we recall the following lemma provides some useful estimates for the bubble functions.}
\begin{lemma}\label{lemma_bubble}
The bubble functions $\chi_T$ and $\chi_\gamma$ the following estimates
\begin{align}
\|v\|_{L^2(T)} \leq C_{L,1} \|\chi_T^{1/2} v\|_{L^2(T)}, \label{eq:bb_lem_1} \\
\|v\|_{L^2(\gamma)} \leq C_{L,2} \|\chi_\gamma^{1/2} v\|_{L^2(\gamma)}, \label{eq:bb_lem_2} \\
\|\chi_\gamma v\|_{L^2(\tilde{\gamma})} \leq C_{L,3} h^{1/2}_\gamma \|v\|_{L^2(\gamma)}, \label{eq:bb_lem_3} \\
\|\chi_\gamma v\|_{H^1(\tilde{\gamma})} \leq C_{L,4} h^{-1/2}_\gamma \|v\|_{L^2(\gamma)}, \label{eq:bb_lem_4} \\
\|\chi_\gamma v\|_{H^2(\tilde{\gamma})} \leq C_{L,5} h^{-3/2}_\gamma \|v\|_{L^2(\gamma)}, \label{eq:bb_lem_5} 
\end{align}
where $v$ is a polynomial. The constants $C_{L,1}, \cdots, C_{L,5}$ depend only on the mesh and shape regularity. 
\begin{proof}
See Proposition 1.4 in \cite{verfurth2013posteriori} for \eqref{eq:bb_lem_1}-\eqref{eq:bb_lem_4}. The inequality \eqref{eq:bb_lem_5} can be proven in the same way for \eqref{eq:bb_lem_4}.
\end{proof}
\end{lemma}
Next, the following
inverse inequality~\cite{riviere2008discontinuous} 
for any polynomial function $v$  defined on an element $T$,  will also be used 
\begin{equation}\label{eq:inverse}
\|v\|_{H^2(T)} \leq C_I h^{-2}_T \|v\|_{L^2(T)}. 
\end{equation}

%\textcolor{red}{
%To derive the lower bounds about \eqref{eq:eta1}-\eqref{eq:eta5}, the inequalities in Lemma \ref{lemma_bubble} will be used, so the terms in the norms of \eqref{eq:eta1}-\eqref{eq:eta5} have to be polynomial, which is typically obtained by $L^2$-projection. Here, two projections are introduced. Firstly, for the term $u$ in the norms of \eqref{eq:eta1}, \eqref{eq:eta3}, and \eqref{eq:eta5}, we perform an element-wise projection, i.e.,
%\begin{equation}
%u_h \in V^{EG}_{h,k} \phantom{a} (u-u_h,v_h)_T=0 \phantom{a} \forall v_h \in M^k_0(\Tau_h).
%\end{equation}
%Secondly, for the term $[u](=u_{T_+}-u_{T_-})$ in the norms of \eqref{eq:eta2}, and \eqref{eq:eta4}, we perform an patch-wise projection, i.e.,
%\begin{equation}
%\begin{split}
%&u_{h,T_{+}} \in M^k_0(\Tau_h) \phantom{a} (u_{T_{+}} - u_{h,T_{+}},v_h)_{T_{+}}=0 \phantom{a} \forall v_h \in M^k_0(\Tau_h), \\
%&u_{h,T_{-}} \in M^k_0(\Tau_h) \phantom{a} (u_{T_{-}}-u_{h,T_{-}},v_h)_{T_{-}}=0 %\end{split}
%\end{equation}
%Then, along $\gamma(=T_+ \cap T_{-})$, $u_{h,T_{+}} - u_{h,T_{-}} = [u_h] \in M^k_0(\Tau_h)$.
%}

With these mathematical tools, we present the following theorem for efficiency. 
\begin{theorem}\label{thm:efficiency}
For $\theta \geq 0$, let $p^n_h$ and $\mathcal R^n$ be given by \eqref{eq:fullly_EG_discretization} and  \eqref{eq:residual}. Then,  the following local efficiency estimates hold:

\begin{subequations}
    \begin{alignat}{2}
        &\eta^n_1 = h^2_T\|A\|_{L^2(T)} \leq C_{L,1}^{2} \left(h^2_T\|\overline{A}-A\|_{L^2(T)} + C_I \|\mathcal R\|_{*(T)}\right), \\
       & \eta^n_2 = h^{3/2}_\gamma\|B\|_{L^2(\gamma)} \leq C^2_{L,2}( h^{3/2}_\gamma\|\overline{B}-B\|_{L^2(\gamma)} + C_{L,5} \|\mathcal R\|_{*(\tilde{\gamma})}\notag \\
        &\quad \quad \quad \quad \quad \quad \ + C_{L,3} h^{2}_\gamma \|A\|_{L^2(\tilde{\gamma})} + \theta C_{L,4} K h_\gamma \|D\|_{L^2(\gamma)}),\\
        &\eta^n_3 = h^{3/2}_\gamma \|C\|_{L^2(\gamma)} \leq C^2_{L,2} \left( h^{3/2}_\gamma\|\overline{C}-C\|_{L^2(\gamma)} + C_{L,5} \|\mathcal R\|_{*(\tilde{\gamma)}} + C_{L,3} h^{2}_\gamma \|A\|_{L^2(\tilde{\gamma})} \right),\\
        %& \eta^n_4 = K_{\mathrm{max}(\gamma)} h^{1/2}_\gamma \|D\|_{L^2(\gamma)} \leq C^2_{L,2} \bigg( K_{\mathrm{max}(\gamma)} h^{1/2}_\gamma\|\overline{D}-D\|_{L^2(\gamma)} + K_{\mathrm{max}(\gamma)} h^{1/2}_\gamma\|D\|_{L^2(\gamma)} + C_{L,5} \|\mathcal{R}\|_{*(\tilde{\gamma})}  \notag \\
%&\quad \quad \quad \quad \quad \quad  \ + C_{I,3} h^{2}_{\gamma} \|A\|_{L^2(\tilde{\gamma})}  + C_{I,3} h^{2}_{\gamma}\|B\|_{L^2(\gamma)} + \theta C_{I,4} h_\gamma K_{\mathrm{max}(\gamma)} \|D\|_{L^2(\gamma)} \bigg)\\
        %& \alpha \eta^n_5 = \alpha K_{\mathrm{max}(\gamma)} h^{1/2}_\gamma  \|E\|_{L^2(\gamma)} \leq C^2_{L,2} \bigg(\alpha K_{\mathrm{max}(\gamma)} h^{1/2}_\gamma  \|\overline{E}-E\|_{L^2(\gamma)} + C_{L,5}  \|\mathcal R\|_{*(\tilde{\gamma)}}\notag \\
&\quad \quad \quad \quad \quad \quad  \ +  C_{L,3} h^{2}_\gamma \|A\|_{L^2(T)} + \theta C_{L,4}  K_{\mathrm{max}(\gamma)} h_\gamma \|E\|_{L^2(\gamma)} \bigg),\label{eq:E_final}\\
&\eta^n_4 = K_{\mathrm{max}(\gamma)} h^{1/2}_\gamma \|D\|_{L^2(\gamma)} \leq C^2_{L,2} \bigg(  2C_D h^{1/2}_\gamma \|\overline{D}-D\|_{L^2(\gamma)} +  2C_D h^{1/2}_\gamma \|\overline{D}\|_{L^2(\gamma)} + C_{L,5} \|\mathcal{R}\|_{*(\tilde\gamma)}  \notag \\ 
& \quad \quad \quad \quad \quad \quad + C_{L,3} h^{2}_{\gamma} \|A\|_{L^2(\tilde{\gamma})}  + C_{L,3} h^{2}_{\gamma}\|B\|_{L^2(\gamma)}  \bigg),\\ 
&\alpha \eta^n_5=\alpha  K_{\mathrm{max}(\gamma)} h^{1/2}_\gamma \|E\|_{L^2(\gamma)} \leq C^2_{L,2} \bigg( C_E h^{3/2}_\gamma\|\overline{E}-E\|_{L^2(\gamma)} + C_{L,5} \|\mathcal R\|_{*(\tilde{\gamma})} \notag \\ 
&\quad \quad \quad \quad \quad \quad+  C_{L,3} h^{2}_\gamma \|A\|_{L^2(T)} +  C_E h^{3/2}_\gamma\|\overline{E}\|_{L^2(\gamma)} \bigg), 
    \end{alignat}
\end{subequations}
where
\begin{subequations}
    \begin{alignat}{2}
        A &= f^n + \nabla \cdot (K\nabla p^n_h) - \dfrac{1}{\delta t}(p^n_h - \mathcal I_n p^{n-1}_h), \label{eq:eta1} \\
        B &= \jump{\mathbf n_\gamma\cdot (K\nabla p^n_h)}, \label{eq:eta2} \\
        C &= g^n_N+{\mathbf n_\gamma\cdot K \nabla p^n_h}, \label{eq:eta3}\\
        D &= \jump{p^n_h}, \label{eq:eta4}\\
        E &= g^n_D-p^n_h, \label{eq:eta5}
    \end{alignat}
\end{subequations}
and $\overline{A}$, $\overline{B}$, $\overline{C}$, $\overline{D}$, $\overline{E}$ are the local perturbations of $A$, $B$, $C$, $D$, $E$ in $M^k_0(\Tau_{h,n})$, defined through  $L^2$--projection; see Remark \ref{remark:efficiency}. The constants $C_{L,1},\ldots,C_{L,5}$ are those appearing in Lemma~\ref{lemma_bubble}, and $C_D=K_{\mathrm{max}  (\gamma)} \mathrm{max}(1, \theta C_{L,4} h^{1/2}_\gamma)$, $C_E  = K_{\mathrm{max}(\gamma)}  \mathrm{max}(\alpha h^{-1}_\gamma, \theta C_{L,4} h^{-1/2}_\gamma )$.

\end{theorem}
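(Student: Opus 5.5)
The plan is the Verfürth bubble-function technique, applied to the residual identity \eqref{eq:identity}, which holds for every $w\in H^2(\Tau_{h,n})$ with $w_h=0$ (only the Galerkin orthogonality step used $w_h\neq 0$). For each indicator we test $\mathcal R^n$ with a locally supported bubble function times a polynomial approximation of the residual quantity. Most terms of \eqref{eq:identity} then drop out, and the survivors are bounded by the local dual norm $\|\mathcal R\|_{*(T)}$ or $\|\mathcal R\|_{*(\tilde\gamma)}$ together with indicators already controlled. We therefore treat the indicators in the order $\eta_1^n$, then $\eta_2^n,\eta_3^n$, then $\eta_4^n,\eta_5^n$.

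\emph{Element residual.} Let $\overline A$ be the local $L^2$-projection of $A$ and set $w=\chi_T^2\overline A$. This function and its gradient vanish on $\partial T$, so every edge term in \eqref{eq:identity} vanishes and $\mathcal R^n(w)=(A,w)_T$. We start from \eqref{eq:bb_lem_1}, which gives $\|\overline A\|_{L^2(T)}^2\le C_{L,1}^2(\overline A,\chi_T\overline A)_T$; the squared bubble $\chi_T^2$ satisfies the same lower bound up to the bubble constants. We split $(\overline A,w)_T=(A,w)_T+(\overline A-A,w)_T$, bound the first piece by $\|\mathcal R\|_{*(T)}\|w\|_{H^2(T)}$, and use the inverse inequality \eqref{eq:inverse}, $\|w\|_{H^2(T)}\le C_I h_T^{-2}\|\overline A\|_{L^2(T)}$. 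Dividing through and multiplying by $h_T^2$, the triangle inequality $\|A\|\le\|\overline A-A\|+\|\overline A\|$ yields the first estimate.

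\emph{Flux jumps and Neumann residual.} For an interior edge we extend $\overline B$ constantly in the normal direction and take $w=\chi_\gamma\overline B$, supported on $\tilde\gamma$, with $\chi_\gamma$ chosen (squared if necessary) so that $w\in H^2(\tilde\gamma)$ and is continuous across $\gamma$. Then $\jump{w}=0$, so the $\alpha$-terms drop, while $\av{w}=w$ on $\gamma$. Identity \eqref{eq:identity} gives
\[
(B,\chi_\gamma\overline B)_\gamma=(A,w)_{\tilde\gamma}-\mathcal R^n(w)-\theta(D,\av{\mathbf n_\gamma\cdot K\nabla w})_\gamma .
\]
The three terms are bounded as follows: the first by \eqref{eq:bb_lem_3}, the second by \eqref{eq:bb_lem_5}, and the third by \eqref{eq:bb_lem_4} together with a trace/inverse estimate on $\gamma$, which produces the $h_\gamma K\|D\|$ contribution. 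Combining with \eqref{eq:bb_lem_2} and multiplying by $h_\gamma^{3/2}$ gives the bound for $\eta_2^n$. The Neumann edge is handled identically with $\overline C$ on the single adjacent element, and there the $\theta$-term is absent.

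\emph{Penalty indicators.} For $\eta_4^n$ we test with a bubble-weighted polynomial that is discontinuous across $\gamma$, for instance $w=\chi_\gamma\overline D$ restricted to $T_-$ and zero on $T_+$, so that $\jump{w}=\chi_\gamma\overline D$. The penalty term $\alpha h_\gamma^{-1}K_{\max}(\gamma)(D,\chi_\gamma\overline D)_\gamma$ is then isolated and equals $\mathcal R^n(w)$ plus the element, flux-jump and $\theta$-consistency terms. These are bounded, respectively, by $\|\mathcal R\|_{*(\tilde\gamma)}$, $h^2\|A\|$, $h^2\|B\|$, and $\theta C_{L,4}$ times the $\|\overline D\|$ and $\|\overline D-D\|$ terms, which is the origin of $C_D$. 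The Dirichlet edge works the same way with $E$ and produces $C_E$.

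\emph{Main obstacle.} The hardest step is the penalty indicators. Because the $\theta$-term $(D,\av{\mathbf n\cdot K\nabla w})$ scales like $h^{-1/2}\|D\|$ against $\|D\|$, it cannot be absorbed into the left-hand side. This is why the statement keeps $\|\overline D\|$ and $\|\overline E\|$ on the right-hand side rather than a genuine oscillation term, and I would accept that form rather than try to eliminate it. A further technical issue is making the edge test functions genuinely $H^2$ (hence the squared bubbles) while keeping the scaling of Lemma~\ref{lemma_bubble}.
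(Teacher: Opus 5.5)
\textbf{Gap: the $\theta$-term in your $\eta^n_2$ bound.} Combining \eqref{eq:bb_lem_4} with a discrete trace inequality on $T_\pm$, as you propose, gives
\[
\bigl\|\av{\mathbf n_\gamma\cdot K\nabla(\chi_\gamma\overline B)}\bigr\|_{L^2(\gamma)}
\le C K h_\gamma^{-1/2}\|\nabla(\chi_\gamma\overline B)\|_{L^2(\tilde\gamma)}
\le C K h_\gamma^{-1}\|\overline B\|_{L^2(\gamma)} .
\]
The power $h_\gamma^{-1}$ is sharp. For the standard tensor-product edge bubble on two rectangles of widths $h_1\neq h_2$ sharing $\gamma$, with $K=\mathbf I$, one has $\av{\mathbf n_\gamma\cdot\nabla\chi_\gamma}=\tfrac12(h_1^{-1}-h_2^{-1})\chi_\gamma$ on $\gamma$.

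After multiplying by $h_\gamma^{3/2}$ you therefore get $\theta CKh_\gamma^{1/2}\|D\|_{L^2(\gamma)}$, which is essentially $\theta\,\eta^n_4$. You do not get the displayed $\theta C_{L,4}Kh_\gamma\|D\|_{L^2(\gamma)}$, which is strictly stronger for $h_\gamma<1$. The paper reaches the power $h_\gamma$ only through the second inequality in \eqref{eq:B4}. There the edge norm $\|\av{\mathbf n_\gamma\cdot K\nabla(\overline B\chi_{\gamma,I})}\|_{L^2(\gamma)}$ is bounded by the volume norm $\|K\nabla(\overline B\chi_{\gamma,I})\|_{L^2(\tilde\gamma)}$ without the trace factor $h_\gamma^{-1/2}$. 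So your sketch proves the standard DG-type bound, in which $\eta^n_2$ is controlled up to $\theta\,\eta^n_4$. You should state that version rather than claim the displayed one.

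\textbf{A second issue in the same step.} Your edge identity, like \eqref{eq:S_interior_reduced}, drops the $\theta$-terms on $\partial\tilde\gamma\setminus\gamma$. The same happens on the other edges of the boundary element in the Neumann and Dirichlet cases, and on the other edges of $T_-$ in your $\eta^n_4$ test. On these edges the test function vanishes but its gradient does not. Squaring the bubble is what removes these terms, exactly as your $\chi_T^2$ does for $\eta^n_1$. Incidentally, that choice repairs \eqref{eq:A1}, which is valid as written only for $\theta=0$. $H^2$-conformity is not the reason to square: the dual norm is taken over the broken space, and squaring would not make the test function $C^1$ across $\gamma$ anyway.

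\textbf{Your route for $\eta^n_4$ genuinely differs from the paper's.} The paper tests with the continuous function $h_\gamma\overline D\chi_{\gamma,I}$, whose jump vanishes, so the penalty term drops out of \eqref{eq:identity}. Its identity for $K_{\max(\gamma)}\int_\gamma\overline D^2\chi_{\gamma,I}\,ds$ only splits $\overline D^2=\overline D(\overline D-D)+\overline D D$ and adds a combination equal to zero. Indeed, the displayed $\eta^n_4$ and $\eta^n_5$ bounds follow from the triangle inequality alone, because $C_{L,2}\ge 1$ (as $0\le\chi_\gamma\le 1$), $C_D\ge K_{\max(\gamma)}$, and $C_E h_\gamma^{3/2}\ge \alpha K_{\max(\gamma)}h_\gamma^{1/2}$.

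Your one-sided test function isolates $\alpha h_\gamma^{-1}K_{\max(\gamma)}(D,\chi_\gamma\overline D)_\gamma$ and gives a genuine estimate, which is what this step ought to do. Track the powers, though. With $w=\chi_\gamma\overline D$ on $T_-$:
\begin{itemize}
\item The flux-jump term contributes a multiple of $\alpha^{-1}h_\gamma^{3/2}\|B\|_{L^2(\gamma)}=\alpha^{-1}\eta^n_2$, not $h_\gamma^2\|B\|_{L^2(\gamma)}$. The paper's $h_\gamma^2$ again comes from applying the volume bound \eqref{eq:bb_lem_3} to an edge norm.
\item The $\theta$-term contributes $\theta C\alpha^{-1}Kh_\gamma^{1/2}\|D\|_{L^2(\gamma)}$, which is of the same order as $\eta^n_4$ itself.
\end{itemize}
Your ``main obstacle'' is therefore miscounted. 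This term can be absorbed into the left-hand side once $\alpha$ is large compared with $\theta C$, and must be kept only otherwise. In either case the inequality you derive is not the displayed one. The displayed one, however, needs no such argument.
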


\begin{proof}
% Let $\overline{A}$, $\overline{B}$, $\overline{C}$, $\overline{D}$, $\overline{E}$ be $L^2$ projection of $A$, $B$, $C$, $D$, $E$ on $M^k_0(\Tau_{h,n})$. We estimate the lower bounds in the order $A$ to $E$.  \\

\noindent (\romannumeral 1) Estimation for $A=f^n + \nabla \cdot (K\nabla p^n_h) - \dfrac{1}{\delta t}(p^n_h - \mathcal I_n p^{n-1}_h)$

By replacing  $w-w_h$ by $\overline{A} \chi_T$ in (\ref{eq:identity}), we have
\begin{equation}\label{eq:A1}
\mathcal R(\overline{A} \chi_T) = \int_T A \overline{A} \chi_T dx, 
\end{equation}
which satisfies
\begin{equation}\label{eq:A}
\int_T \overline{A}^2 \chi_T dx = \int_T \chi_T \overline{A}(\overline{A}-A) dx + \mathcal R(\overline{A} \chi_T). 
\end{equation}
Using the Cauchy-Schwarz inequality, the first term on the right-hand side of (\ref{eq:A}) is estimated as
\begin{equation}\label{eq:first_A}
\int_T \chi_T \overline{A}(\overline{A}-A) dx \leq \|\chi_T \overline{A}\|_{L^2(T)} \|\overline{A}-A\|_{L^2(T)} \leq  \| \overline{A}\|_{L^2(T)} \|\overline{A}-A\|_{L^2(T)},
\end{equation}
and by applying (\ref{eq:dual_norm}) and (\ref{eq:inverse}), the second term is estimated as  
\begin{equation}\label{eq:second_A}
\mathcal R(\overline{A} \chi_T) \leq \|\mathcal R\|_{*(T)} \|\overline{A} \chi_T\|_{H^2(T)} \leq \|\mathcal R\|_{*(T)} C_I h^{-2}_T\|\overline{A} \chi_T\|_{L^2(T)} \leq \|\mathcal R\|_{*(T)} C_I h^{-2}_T\|\overline{A}\|_{L^2(T)}.
\end{equation}
Collecting (\ref{eq:first_A}) and (\ref{eq:second_A}), (\ref{eq:A}) becomes
\begin{equation}\label{eq:sum_A}
\int_T \overline{A}^2 \chi_T dx  \leq \| \overline{A}\|_{L^2(T)} \|\overline{A}-A\|_{L^2(T)} + C_I h^{-2}_T\|\mathcal R\|_{*(T)} \|\overline{A}\|_{L^2(T)},
\end{equation}
and then by (\ref{eq:bb_lem_1})
\begin{equation}
\|\overline{A}\|_{L^2(T)} \leq C_{L,1}^{2} \left(\|\overline{A}-A\|_{L^2(T)} + C_I h^{-2}_T\|\mathcal R\|_{*(T)}\right).
\end{equation}
By the triangle inequality, we can have
\begin{equation}\label{eq:A_final}
\|A\|_{L^2(T)} \leq C_{L,1}^{2} \left(\|\overline{A}-A\|_{L^2(T)} + C_I h^{-2}_T\|\mathcal R\|_{*(T)}\right) . 
\end{equation}

\phantom{a} \\
\noindent (\romannumeral 2) Estimation for $B=\jump{\mathbf n_\gamma\cdot (K\nabla p^n_h)}$

Again, by substituting $\overline{B} \chi_{\gamma,I}$ for $w-w_h$ in (\ref{eq:identity}), then we have
\begin{equation}\label{eq:S_interior_edge}
    \begin{split}
        \mathcal R(\overline{B} \chi_{\gamma,I}) &= \sum_{T\in\Tau_h} \left(f^n + \nabla \cdot (K\nabla p^n_h) - \dfrac{1}{\delta t}(p^n_h - \mathcal I_n p^{n-1}_h), \overline{B} \chi_{\gamma,I} \right)_T - \sum_{{\gamma'}\in\varepsilon^I_h}\left(\jump{\mathbf n_{\gamma'}\cdot (K\nabla p^n_h)}, \av{\overline{B} \chi_{\gamma,I} }\right)_{\gamma'}\\
        &- \theta \sum_{{\gamma'} \in \varepsilon^{I}_h}\left(\jump{p^n_h},\av{\mathbf n_{\gamma'}\cdot (K\nabla (\overline{B} \chi_{\gamma,I}) )}
    \right)_{\gamma'} +\theta \sum_{{\gamma'}\in \varepsilon_h^D}\left(g^n_D-p^n_h, \mathbf n_{\gamma'}\cdot K\nabla (\overline{B} \chi_{\gamma,I}) \right)_{\gamma'}
    \\
    & -\alpha \sum_{{\gamma'}\in\varepsilon^{I}_h} h_{\gamma'}^{-1} K_{\mathrm{max}(\gamma)} \left(\jump{p^n_h},\jump{\overline{B} \chi_{\gamma,I}}\right)_{\gamma'}
     + \alpha \sum_{{\gamma'}\in \varepsilon_h^D}h_{\gamma'}^{-1} K_{\mathrm{max}(\gamma)} \left(g^n_D-p^n_h, \overline{B} \chi_{\gamma,I} \right)_{\gamma'}\\
     & - \sum_{{\gamma'} \in \varepsilon^N_h}\left(g^n_N+{\mathbf n_{\gamma'}\cdot K \nabla p^n_h}, \overline{B} \chi_{\gamma,I} \right)_{\gamma'} . 
    \end{split}
\end{equation}

Since $\chi_{\gamma,I}$ is continuous, $\jump{\overline{B} \chi_{\gamma,I}}=0$ and $\{\overline{B} \chi_{\gamma,I}\}=\overline{B} \chi_{\gamma,I}$ along edge $\gamma$.
Also, the terms for Dirichlet and Neumann boundaries disappear because $\chi_{\gamma,I}$ is zero on the boundaries. 
Thus, (\ref{eq:S_interior_edge}) is reduced to
\begin{equation}\label{eq:S_interior_reduced}
\mathcal R(\overline{B} \chi_{\gamma,I})=\left(A, \overline{B} \chi_{\gamma,I}\right)_{\tilde{\gamma}} - \left(B, \overline{B} \chi_{\gamma,I}\right)_{\gamma} - \theta \left(D, \{\mathbf n_\gamma\cdot K\nabla (\overline{B} \chi_{\gamma,I}) \}\right)_\gamma ,
\end{equation}
which satisfies
\begin{equation}\label{eq:B}
\int_\gamma \overline{B}^2 \chi_{\gamma,I} ds = \int_\gamma \chi_{\gamma,I} \overline{B}(\overline{B}-B) ds - \mathcal R(\overline{B}\chi_{\gamma,I}) + (A, \overline{B}\chi_{\gamma,I})_{\tilde{\gamma}} - \theta \left(D, \{\mathbf n_\gamma\cdot K\nabla (\overline{B} \chi_{\gamma,I})\}\right)_\gamma .
\end{equation}

Each term on the right-hand side is estimated as follows. By the Cauchy-Schwarz inequality, 
\begin{equation}\label{eq:B1}
\int_\gamma \chi_{\gamma,I} \overline{B}(\overline{B}-B) ds \leq \|\chi_{\gamma,I} \overline{B}\|_{L^2(\gamma)} \|\overline{B}-B\|_{L^2(\gamma)} \leq \|\overline{B}\|_{L^2(\gamma)} \|\overline{B}-B\|_{L^2(\gamma)}.
\end{equation}
By the Cauchy-Schwarz inequality, \eqref{eq:dual_norm}, and \eqref{eq:bb_lem_5}, 
\begin{equation}\label{eq:B2}
- \mathcal R(\overline{B}\chi_{\gamma,I}) \leq \|\mathcal R\|_{*(\tilde{\gamma})} \|\overline{B}\chi_{\gamma,I}\|_{H^2(\tilde{\gamma})} \leq \|\mathcal R\|_{*(\tilde{\gamma})} C_{L,5} h^{-3/2}_\gamma\|\overline{B}\|_{L^2(\gamma)}.
\end{equation}
Again, by the Cauchy-Schwarz inequality and \eqref{eq:bb_lem_3}, 
\begin{equation}\label{eq:B3}
(A, \overline{B}\chi_{\gamma,I})_{\tilde{\gamma}} \leq \|A\|_{L^2(\tilde{\gamma})} ||\overline{B}\chi_{\gamma,I}\|_{L^2(\tilde{\gamma})} \leq \|A\|_{L^2(\tilde{\gamma})} C_{L,3} h^{1/2}_\gamma||\overline{B}\|_{L^2(\gamma)}.
\end{equation}
By the Cauchy-Schwarz inequality and \eqref{eq:bb_lem_4}, the last term can be estimated as
\begin{equation}\label{eq:B4}
\begin{split}
-\left(D, \{\mathbf n_\gamma\cdot K\nabla (\overline{B} \chi_{\gamma,I} ) \}\right)_\gamma &\leq \|D\|_{L^2(\gamma)}  \|\{\mathbf n_\gamma\cdot K\nabla (\overline{B} \chi_{\gamma,I})\}\|_{L^2(\gamma)}  \\
& \leq \|D\|_{L^2(\gamma)} \|K\nabla (\overline{B} \chi_{\gamma,I}) \|_{L^2(\tilde{\gamma})} \\
&\leq \|D\|_{L^2(\gamma)} C_{L,4} h^{-1/2}_\gamma K \|\overline{B} \|_{L^2(\gamma)} .
\end{split}
\end{equation}

Collecting \eqref{eq:B1}-\eqref{eq:B4} and using \eqref{eq:bb_lem_2}, \eqref{eq:B} becomes
\begin{equation}
\|\overline{B}\|_{L^2(\gamma)} \leq C^2_{L,2}\left( \|\overline{B}-B\|_{L^2(\gamma)} + C_{L,5} h^{-3/2}_\gamma \|\mathcal R\|_{*(\tilde{\gamma})} + C_{L,3} h^{1/2}_\gamma \|A\|_{L^2(\tilde{\gamma})} + \theta C_{L,4} K h^{-1/2}_\gamma \|D\|_{L^2(\gamma)}\right) ,
\end{equation}
and by the triangle inequality,
\begin{equation}\label{eq:B_final}
\|B\|_{L^2(\gamma)} \leq C^2_{L,2}\left( \|\overline{B}-B\|_{L^2(\gamma)} + C_{L,5} h^{-3/2}_\gamma \|\mathcal R\|_{*(\tilde{\gamma})} + C_{L,3} h^{1/2}_\gamma \|A\|_{L^2(\tilde{\gamma})} + \theta C_{L,4} K h^{-1/2}_\gamma \|D\|_{L^2(\gamma)}\right) .
\end{equation}

\phantom{a} \\
\noindent (\romannumeral 3) Estimation for $C=g_N+{\mathbf n_\gamma\cdot K \nabla p^n_h}$

By substitute $\overline{C} \chi_{\gamma,N}$ for $w-w_h$ in \eqref{eq:identity}, then we have
\begin{equation}
\mathcal R(\overline{C} \chi_{\gamma,N}) = (A, \overline{C} \chi_{\gamma,N})_{\tilde{\gamma}} - (C, \overline{C} \chi_{\gamma,N})_{\gamma},
\end{equation}
thus
\begin{equation}
\int_{\gamma} \overline{C}^2 \chi_{\gamma,N} ds = \int_{\gamma} \chi_{\gamma,N}\overline{C}(\overline{C}-C) ds - \mathcal R(\overline{C} \chi_{\gamma,N}) + (A, \overline{C} \chi_{\gamma,N})_{\tilde{\gamma}}.
\end{equation}
Similar to the estimation for $B$, by the Cauchy-Schwarz inequality, \eqref{eq:bb_lem_3}, \eqref{eq:bb_lem_5}, and the triangle inequality, then we obtain
\begin{equation}
\|\overline{C}\|_{L^2(\gamma)} \leq C^2_{L,2}\left( \|\overline{C}-C\|_{L^2(\gamma)} + C_{L,5} h^{-3/2}_\gamma \|\mathcal R\|_{*(\tilde{\gamma})} + C_{L,3} h^{1/2}_\gamma \|A\|_{L^2(\tilde{\gamma})} \right)
\end{equation}
and
\begin{equation}\label{eq:C_final}
\|C\|_{L^2(\gamma)} \leq C^2_{L,2} \left( \|\overline{C}-C\|_{L^2(\gamma)} + C_{L,5} h^{-3/2}_\gamma \|\mathcal R\|_{*(\tilde{\gamma})} + C_{L,3} h^{1/2}_\gamma \|A\|_{L^2(\tilde{\gamma})} \right).
\end{equation}

\phantom{a} \\
\noindent (\romannumeral 4) Estimation for $D=\jump{p^n_h}$

With $h_\gamma \overline{D}\chi_{\gamma,I}$ instead of $w-w_h$ in \eqref{eq:identity}, we have
\begin{equation}
\mathcal R(h_\gamma \overline{D}\chi_{\gamma,I}) = (A, h_\gamma \overline{D}\chi_{\gamma,I})_{\tilde{\gamma}} - (B, h_\gamma \overline{D}\chi_{\gamma,I})_\gamma - \theta(D, \mathbf n_\gamma \cdot K \nabla(h_\gamma \overline{D}\chi_{\gamma,I}))_\gamma.
\end{equation}

\begin{equation}
\begin{split}
K_{\mathrm{max}(\gamma)} \int_{\gamma}  \overline{D}^2 \chi_{\gamma,I} ds &= K_{\mathrm{max}(\gamma)} \int_{\gamma} \chi_{\gamma,I}\overline{D}(\overline{D}-D) ds + K_{\mathrm{max}(\gamma)} \int_{\gamma}  \chi_{\gamma,I} \overline{D}D ds + \mathcal R(h_\gamma \overline{D}\chi_{\gamma,I}) \\
&- (A, h_\gamma \overline{D}\chi_{\gamma,I})_{\tilde{\gamma}} + (B, h_\gamma \overline{D}\chi_{\gamma,I})_\gamma + \theta(D, \mathbf n_\gamma \cdot K \nabla(h_\gamma \overline{D}\chi_{\gamma,I}))_\gamma.
\end{split}
\end{equation}

Now, we estimate each term in similar ways as shown for $B$ and $C$. 
\begin{equation}
K_{\mathrm{max}(\gamma)} \int_{\gamma}  \chi_{\gamma,I}\overline{D}(\overline{D}-D) ds \leq K_{\mathrm{max}(\gamma)} \|\overline{D}\|_{L^2(\gamma)}\|\overline{D}-D\|_{L^2(\gamma)}
\end{equation}

\begin{equation}
K_{\mathrm{max}(\gamma)} \int_{\gamma} \chi_{\gamma,I} \overline{D}D ds \leq K_{\mathrm{max}(\gamma)} \|\overline{D}\|_{L^2(\gamma)}\|D\|_{L^2(\gamma)}
\end{equation}

\begin{equation}
\begin{split}
\mathcal R(h_\gamma \overline{D} \chi_{\gamma,I}) \leq  \|\mathcal{R}\|_{*(\tilde{\gamma})} \|h_\gamma \overline{D} \chi_{\gamma,I}\|_{H^2(\tilde{\gamma})} \leq C_{L,5} h^{-1/2}_\gamma \|\mathcal{R}\|_{*(\tilde{\gamma})} \|\overline{D}\|_{L^2(\gamma)}.
\end{split}
\end{equation}

\begin{equation}
\begin{split}
-(A, h_\gamma \overline{D}\chi_{\gamma,I})_{\tilde{\gamma}}  \leq \|A\|_{L^2(\tilde{\gamma})} \|h_\gamma\overline{D}\chi_{\gamma,I}\|_{L^2(\tilde{\gamma})} 
\leq  C_{L,3} h^{3/2}_{\gamma} \|A\|_{L^2(\tilde{\gamma})} \|\overline{D}\|_{L^2(\gamma)}.
\end{split}
\end{equation}

\begin{equation}
(B, h_\gamma \overline{D} \chi_{\gamma,I})_{\gamma} \leq  \|B\|_{L^2(\gamma)} \|h_\gamma\overline{D}\chi_{\gamma,I}\|_{L^2(\gamma)} \leq C_{L,3} h^{3/2}_\gamma \|B\|_{L^2(\gamma)} \|\overline{D}\|_{L^2(\gamma)}.
\end{equation}

\begin{equation}
(D, \mathbf n_\gamma \cdot (K\nabla(h_\gamma \overline{D} \chi_{\gamma,I})) )_\gamma  \leq \|D\|_{L^2(\gamma)}  K_{\mathrm{max}(\gamma)} h_\gamma\|\nabla(\overline{D}\chi_{\gamma,I})\|_{L^2(\gamma)} \leq  C_{L,4} h^{1/2}_\gamma K_{\mathrm{max}(\gamma)} \|D\|_{L^2(\gamma)} \| \overline{D}\|_{L^2(\gamma)}.
\end{equation}

Collecting all the above estimates, we have
\begin{equation}\label{eq:D_final}
\begin{split}
K_{\mathrm{max}(\gamma)} \|\overline{D}\|_{L^2(\gamma)} &\leq C^2_{L,2} \bigg( K_{\mathrm{max}(\gamma)} \|\overline{D}-D\|_{L^2(\gamma)} + K_{\mathrm{max}(\gamma)} \|D\|_{L^2(\gamma)} + C_{L,5} h^{-1/2}_\gamma\|\mathcal{R}\|_{*(\tilde\gamma)}   \\
& + C_{L,3} h^{3/2}_{\gamma} \|A\|_{L^2(\tilde{\gamma})}  + C_{L,3} h^{3/2}_{\gamma}\|B\|_{L^2(\gamma)} + \theta C_{L,4} h^{1/2}_\gamma K_{\mathrm{max}(\gamma)} \|D\|_{L^2(\gamma)} \bigg).
\end{split}
\end{equation}
{
The triangle inequality $\|D\| \leq \|\overline{D}-D\| + \|\overline{D}\|$ leads to 
\begin{equation}
\begin{split}
K_{\mathrm{max}(\gamma)} \|D\|_{L^2(\gamma)} &\leq C^2_{L,2} \bigg(  2C_D\|\overline{D}-D\|_{L^2(\gamma)} +  2C_D\|\overline{D}\|_{L^2(\gamma)} + C_{L,5} h^{-1/2}_\gamma\|\mathcal{R}\|_{*(\tilde\gamma)}   \\
& + C_{L,3} h^{3/2}_{\gamma} \|A\|_{L^2(\tilde{\gamma})}  + C_{L,3} h^{3/2}_{\gamma}\|B\|_{L^2(\gamma)}  \bigg), 
\end{split}
\end{equation}
where $C_D := K_{\mathrm{max} (\gamma)} \times  \mathrm{max}(1, \theta C_{L,4} h^{1/2}_\gamma)$
}

\phantom{a} \\
\noindent (\romannumeral 5) Estimation for $E= g_D-p^n_h  $

With $\overline{E}\chi_{\gamma,D}$, \eqref{eq:identity} becomes
\begin{equation}
\mathcal R(\overline{E} \chi_{\gamma,D}) = (A, \overline{E} \chi_{\gamma,D})_{\tilde{\gamma}} + \theta (E,\mathbf n_\gamma\cdot K\nabla (\overline{E} \chi_{\gamma,D}) )_\gamma + \alpha h_\gamma^{-1} K_{\mathrm{max}(\gamma)} (E, \overline{E} \chi_{\gamma,D} )_\gamma, 
\end{equation}
which fulfills
\begin{equation}
\begin{split}
\alpha h^{-1}_\gamma K_{\mathrm{max}(\gamma)} \int_\gamma \overline{E}^2 \chi_{\gamma,D} ds &= \alpha h^{-1}_\gamma K_{\mathrm{max}(\gamma)} \int_\gamma \chi_{\gamma,D} \overline{E}(\overline{E}-E) ds + \mathcal R(\overline{E}\chi_{\gamma,D}) \\
& - (A, \overline{E}\chi_{\gamma,D})_{\tilde{\gamma}} - \theta \left(E, \mathbf n_\gamma\cdot (K\nabla (\overline{E} \chi_{\gamma,D}) ) \right)_\gamma .
\end{split}
\end{equation}
By a similar estimation to those shown above, we have
\begin{equation}\label{eq:E_final}
\begin{split}
\alpha h^{-1}_\gamma K_{\mathrm{max}(\gamma)} \|\overline{E}\|_{L^2(\gamma)} \leq C^2_{L,2} &\bigg(\alpha h^{-1}_\gamma K_{\mathrm{max}(\gamma)} \|\overline{E}-E\|_{L^2(\gamma)} + C_{L,5} h^{-3/2}_\gamma \|\mathcal R\|_{*(\tilde{\gamma)}} \\
&+  C_{L,3} h^{1/2}_\gamma \|A\|_{L^2(T)} + \theta C_{L,4} h^{-1/2}_\gamma K_{\mathrm{max}(\gamma)} \|E\|_{L^2(\gamma)} \bigg),
\end{split}
\end{equation}
{
and the following is obtained by triangle inequality
\begin{equation}
\begin{split}
\alpha h^{-1}_\gamma K_{\mathrm{max}(\gamma)} \|E\|_{L^2(\gamma)} \leq C^2_{L,2} &\bigg( C_E\|\overline{E}-E\|_{L^2(\gamma)} + C_{L,5} h^{-3/2}_\gamma \|\mathcal R\|_{*(\tilde{\gamma})} \\
&+  C_{L,3} h^{1/2}_\gamma \|A\|_{L^2(T)} +  C_E\|\overline{E}\|_{L^2(\gamma)} \bigg)  
\end{split}
\end{equation}
where $C_E = K_{\mathrm{max}(\gamma)} \times \mathrm{max}(\alpha h^{-1}_\gamma, \theta C_{L,4} h^{-1/2}_\gamma )$.
}

Finally, we finish the proof by multiplying \eqref{eq:A_final},\eqref{eq:B_final}, \eqref{eq:C_final}, \eqref{eq:D_final},\eqref{eq:E_final} by $h^2_T$, $h^{3/2}_\gamma$, $h^{3/2}_\gamma$, $h^{1/2}_\gamma$, and $h^{3/2}_\gamma$, respectively. 

\end{proof}

\begin{remark}\label{remark:efficiency}
The local perturbations of \( A, B, C, D, E \) in \eqref{eq:eta1}–\eqref{eq:eta5} are polynomial approximations of themselves. Therefore, we may apply Lemma \ref{lemma_bubble} to prove Theorem \ref{thm:efficiency}. These approximations can be defined directly through the \( L^2 \)-projection, e.g., \( \overline{A} \), or in an analogous manner for \( \overline{B}, \dots, \overline{E} \). For further details, we refer readers to \cite{verfurth2013posteriori, AINSWORTH19971, book_a_posteriori_error}.

\end{remark}
\clearpage
\newpage

\clearpage
\newpage
\section{Numerical example}
\label{sec:num}

In this section, we present the numerical experiments of the EG approximation of equation \eqref{eqn:main}. All numerical examples are solved in a two-dimensional domain ($d = 2$) using IIPG~($\theta = 0$) and NIPG~($\theta =1$) formulation with quadrilateral ($\mathbb Q_k$) elements. The implementation is carried out using the  deal.II finite element library \cite{dealII94}. 

We recall that the time discretization is carried out with uniform time step $\delta t = T/N$, and 
the fully-discrete problem \eqref{eq:fullly_EG_discretization} at each time step $t = t_n$, $n \in \{1,..., N\}$. 
The numerical error over the simulation time interval  $[0,t_n]\subset[0,T]$ is measured in the following norm
\begin{equation}
			% \|p_h - p \|_{l^\infty(0,t_n;L^2)}:= \max_{0\leq m \leq n} \|p^m_h - p^m\|_{L^2(\Omega)}, \text{ and } 
			\|p_h - p \|_{l^\infty(0,t_n;H^1)}:= \max_{1\leq m \leq n} \vertiii{p^m_h - p^m}_{H^1(\Tau_{h,m})}.
\end{equation}

The error estimators at a given computational time $t_n$ derived from Theorem \ref{thm:1} will be defined as
\begin{equation}\label{eq:estimators_total}
	\eta^{n} := \sqrt{\sum_{T\in\Tau_{h,n}}(\eta_T^{n})^2},
\end{equation}
where $\eta_T^n$ for $T\in\Tau_{h,n}$ is local error estimators, {which} is given by
\begin{equation}\label{eq:estimators_local}
	\eta_T^{n} := \sqrt{(\eta^n_1)^2 + 0.5 \times \sum_{\gamma \subset \partial T,\gamma\in \varepsilon^I_{h,n}} \left(\alpha (\eta^n_4)^2 + (\eta_2^n)^2 \right) + \sum_{\gamma\subset\partial T,\gamma \in \varepsilon^N_{h,n}} (\eta^n_3)^2 + \sum_{\gamma\subset\partial T, \gamma\in\varepsilon^D_{h,n}} \alpha (\eta_5^n)^2}.
\end{equation} 
The factor $0.5$ avoids double counting of interior edge contributions, since each interior edge $\gamma\in\varepsilon^I_{h,n}$ is shared by two neighboring elements. The parameter $\alpha>0$ is the penalty parameter from the EG discretization. In the analysis, penalty-dependent constants are absorbed into the generic constants in Theorems~\ref{thm:1} and~\ref{thm:efficiency}; in the numerical local estimator, the corresponding edge and boundary indicators are multiplied by $\alpha$ to reflect the penalty scaling used in the scheme.

To analyze the convergence behavior of the total error estimations, we define
\begin{equation}
	\eta_{l^\infty(0,t_n)} := \max_{1\leq m \leq n} \sum_{T\in\Tau_{h,m}}\eta^m_T.
\end{equation}
The effectivity indices are then given by
\begin{equation} 
% \text{EI}_{n,L^2} := \dfrac{\eta_{l^\infty(0,t_n)}}{|p_h-p|_{l^\infty(0,t_n;L^2)}},\quad \text{and} \quad 
\text{EI}_{n} := \dfrac{\eta_{l^\infty(0,t_n)}}{\|p_h-p\|_{l^\infty(0,t_n;H^1)}}, \end{equation} which are used to assess whether $\eta^n$ is a good estimation of the error. If these indices become unbounded, the error estimation overestimates the error, indicating that the computed error bounds are too large relative to the true error. Conversely, if these indices decrease to zero, the error estimation underestimates the error, implying that the estimator fails to adequately capture the actual error magnitude. {Here we note that the $H_1$-norm is used here to assess the performance of the error estimator instead of residual norm \ref{eq:residual}; however, the numerical example in the following section will show the error estimator is also reliable and efficient with respect to reduce the approximation error in $H_1$-norm.
}

Finally, we define the minimal mesh size over the simulation time $[0,T]$ by 
\begin{equation}
    h_\text{min} = \min_{0\leq n \leq N} h_\text{min}^n,
\end{equation}
where $h_\text{min}^n$ is the smallest mesh size in the triangulation $\Tau_{h,n}$ at time step $t_n$.

\subsection{Adaptive algorithm}\label{sec:algorithm}
The computations incorporate the error estimators \eqref{eq:estimators_total}-\eqref{eq:estimators_local}, and the $h$--adaptive refinement at each time step \( t_n \) consists of two main parts:
\begin{itemize}
    \item {Coarsening:} Mark and coarsen \( \theta_\text{coarse}\% \) of the elements \( T \) in the triangulation \( \mathcal{T}_{h,n-1} \) with the lowest local error estimators.
    
    \item {Refinement and Iterative Solving:} Mark and refine the elements in \( \mathcal{M} \subset \Tau_{h,n} \) using the D\"orfler marking strategy \cite{doi:10.1137/0733054}, where \( \theta_\text{refine}\% \) is chosen such that  
    \begin{equation}
        \theta_\text{refine} \eta^n \leq \sum_{T\in\mathcal{M}} \eta^n_T.
    \end{equation}
    The problem \eqref{eqn:main} is then solved on the updated triangulation. This process is repeated iteratively until the prescribed tolerance \( \tau > 0 \) is satisfied:
    $  \eta^{n} < \tau.$
\end{itemize}
We outline the adaptive algorithm  in the following:
\begin{algorithm}[H]
	\caption{$h$--Adaptive algorithm at $t_n$: updating ($p^{n-1}_h,\Tau_{h,n-1})$ to ($p^{n}_h,\Tau_{h,n}$)}\label{alg:h-adapt}
	\begin{algorithmic}%[1]\label{algorithm}
		\Procedure{$(p^n_h,\Tau_{h,n})=$Adaptive}{$\tau,\theta_\text{coarse},\theta_\text{refine};p^{n-1}_h,\Tau_{h,n-1}$}
		\State Compute $\eta^{n-1}_T$ on $\Tau_{h,n-1}$ with $p^{n-1}_h$

		\State $\Tau_{h,n-1}^\text{coarse} =\text{Coarse}(\Tau_{h,n-1})$: $\Tau_{h,n-1}^\text{coarse}$ is obtained by coarsening $T\in\Tau_{h,n-1}$ such that 
		
		\qquad \qquad \qquad \qquad \qquad \qquad $\eta^{n-1}_T \leq \theta_\text{coarse}\max_{T'\in\Tau_{h,n-1}}\eta^{n-1}_{T'}$
%		\State (Project $p^{n-1}_h$ to $\Tau_{h,n-1}^\text{coarse}$)
		\State Set $i = 0$, and set $\Tau_{h}^{(i)} = \Tau_{h,n-1}^\text{coarse}$
		
		 \quad Solve (1) on $\Tau_{h}^{(i)}$ to have $p^{(i)}_h$, and compute $\eta^{(i)}$
		\While{$\eta^{(i)} < \tau$}
		
		\State $\Tau_h^{\text{Refine}}=\text{Refine}(\Tau_{h}^{(i)})$ by D\"ofler marking strategy:
		
		\quad \quad \qquad Find the smallest subset $\mathcal M\subset \Tau_h^{(i)}$ such that
		
		\qquad \qquad \qquad \qquad \qquad \qquad $\theta_\text{refine}\eta \leq \sum_{T\in\mathcal M}\eta_T$
		
		\quad \quad \qquad $\Tau_h^{\text{Refine}}$ is obtained by refining all the meshes in $\mathcal M$
%		\State (Project $p^{n-1}_h$ to $\Tau_{h,n-1}^\text{Refine}$)
		\State Set $i = i+1$, and set $\Tau_{h}^{(i)} = \Tau_{h,n-1}^\text{refine}$
		
		\quad\ \quad Solve (1) on $\Tau_{h}^{(i)}$ to have $p^{(i)}_h$, and compute $\eta^{(i)}$
		\EndWhile
		\State \textbf{return} $(p^n_h,\Tau_{h,n}) = (p^{(i)}_h,\Tau_{h}^{(i)})$
	\EndProcedure
\end{algorithmic}
\end{algorithm}
The parameters $\theta_\text{coarse}$ and $\theta_\text{refine}$ control the coarsening and refinement steps in the adaptive procedure. The parameter $\theta_\text{refine}$ determines the fraction of the total error estimator used in the D"orfler marking strategy. Larger values generally mark more elements for refinement, leading to faster error reduction at the expense of additional degrees of freedom. The parameter $\theta_\text{coarse}$ controls how aggressively elements with small local error indicators are selected for coarsening. In the numerical examples, these parameters are chosen empirically to balance accuracy and computational cost.

On adaptive meshes, hanging nodes may appear at nonmatching interfaces between refined and unrefined elements; see Figure~\ref{fig:egq1-hanging-node} for an illustration in the EG-$\mathbb Q_1$ case. For the continuous $\mathbb Q_k$ component, the degrees of freedom associated with hanging nodes are constrained by interpolation from the neighboring coarse-grid edge or face.

To illustrate this in the $\mathbb Q_1$ case, let $x_1$ and $x_2$ be the endpoints of a coarse edge and let $x_m$ be the midpoint hanging node, as shown in Figure~\ref{fig:egq1-hanging-node}. The hanging-node value is constrained by interpolation from the neighboring coarse-edge degrees of freedom and is therefore not treated as an independent unknown. This preserves the global continuity of the continuous $\mathbb Q_1$ component across nonmatching interfaces. For higher-order $\mathbb Q_k$ elements, analogous Lagrange interpolation constraints are imposed using the degrees of freedom on the corresponding coarse-grid edge or face. The piecewise constant enrichment is discontinuous by construction and therefore does not require hanging-node constraints.

\begin{figure}[htbp]
\centering
\begin{tikzpicture}[scale=2.2]

%------------------------------------------------
% Coordinates
%------------------------------------------------
% left coarse element
\coordinate (A) at (0,0);
\coordinate (B) at (0,1);
\coordinate (C) at (1,0);
\coordinate (D) at (1,1);

% right refined 2 by 2 grid
\coordinate (E) at (2,0);
\coordinate (F) at (2,0.5);
\coordinate (G) at (2,1);

\coordinate (M) at (1,0.5);
\coordinate (R) at (1.5,0);
\coordinate (S) at (1.5,0.5);
\coordinate (T) at (1.5,1);

%------------------------------------------------
% Mesh
%------------------------------------------------
% left coarse element
\draw[thick] (A) rectangle (D);

% right four refined elements
\draw[thick] (C) rectangle (G);
\draw[thick] (R) -- (T);
\draw[thick] (M) -- (F);

% nonmatching interface
\draw[very thick,dashed] (C) -- (D);

%------------------------------------------------
% CG Q1 degrees of freedom: filled circles
%------------------------------------------------
\foreach \P in {A,B,C,D,E,F,G,M,R,S,T}
{
    \fill (\P) circle (1.5pt);
}

% labels on the nonmatching interface
\node[below=3pt] at (C) { $x_1$};
\node[left,red] at (M) {$x_m$};
\fill[red] (M) circle (1.1pt) ;
\node[above=3pt] at (D) { $x_2$};

%------------------------------------------------
% DGQ0 degrees of freedom: open circles
%------------------------------------------------
\node[draw, fill=white, minimum size=5pt, inner sep=0pt] at (0.5,0.5) {};

\foreach \x/\y/\name in {
    1.25/0.25/U_1,
    1.75/0.25/U_2,
    1.25/0.75/U_3,
    1.75/0.75/U_4}
{
    \node[draw, fill=white, minimum size=5pt, inner sep=0pt] at (\x,\y) {};
    \node[above=3pt] at (\x,\y) {};
}

%------------------------------------------------
% Titles
%------------------------------------------------
% \node[align=center] at (0.5,1.18) {\small coarse element};
% \node[align=center] at (1.5,1.18) {\small four refined elements};

% interface label

%------------------------------------------------
% Legend on the right-hand side
%------------------------------------------------
\begin{scope}[shift={(2.35,0.-0.1)}]
    \fill (0,1) circle (1.5pt);
    \node[anchor=west] at (0.08,1) {\small nodal CG-$\mathbb Q_1$ degree of freedom};

    \fill[black] (0, 0.78) circle (1.5pt);
    \fill[red] (0, 0.78) circle (1.1pt);
    \node[anchor=west] at (0.08,0.78) {\small nonmatching interface with hanging node $x_m$};

    \node[draw, fill=white, minimum size=5pt, inner sep=0pt] at (0,0.56) {};
    \node[anchor=west] at (0.08,0.56) {\small element-wise DG-$\mathbb Q_0$ degree of freedom};

\node[anchor=west] at (0.08,0.25)
{\small $p_h(x_m)=\tfrac12 p_h(x_1)+\tfrac12 p_h(x_2)$};

\end{scope}

\end{tikzpicture}
\caption{
Illustration of the EG-$\mathbb Q_1$ degrees of freedom on a nonmatching grid. The filled circles denote the continuous $\mathbb Q_1$ degrees of freedom, while the open circles denote the element-wise constant enrichment. The hanging-node value at $x_m$ is constrained by interpolation from the coarse-edge values at $x_1$ and $x_2$.
}
\label{fig:egq1-hanging-node}
\end{figure}

% To analyze the convergence behavior of the total error estimations  from $t_0$ to $t_m$, the following quantity is used
% \begin{equation}
% 	\eta_{l^\infty(0,t_m)} := \max_{0\leq n \leq m} \sum_{T\in\Tau_{h,n}}\eta^n_\text{total}.
% \end{equation}
% Furthermore, the following efficiency indices are introduced:
% \begin{equation}
% 		\text{EI}_{m,L^2} := \dfrac{\eta_{l^\infty(0,t_m)}}{\|p_h-p\|_{l^\infty(0,t_m;L^2)}},\quad \text{and }\quad 
% 		\text{EI}_{m,H^1} := \dfrac{\eta_{l^\infty(0,t_m)}}{\|p_h-p\|_{l^\infty(0,t_m;H^1)}}.
% \end{equation}
\subsection{Example 1. A solution with a singularity in a L-shaped domain}\label{eqxample1}

In the L--shaped 
domain $(x,y)\in\Omega = (-1,1)^2\backslash[0,1]^2$  within time interval $t\in[0,T],$ where $T=0.5$, 
we consider the following exact solution which is written in polar coordinate $(r,\phi)$ as 
\begin{equation}
    p(x,y,t) = \hat{p}(r,\phi,t) = \sin(\dfrac{\pi}{2} t) r^{2/3}\sin(\dfrac{2}{3} \phi).\label{eq:p_1}
\end{equation}
Here, $\phi$ is defined as clockwise angle against the $x$--axis on the $\Omega$. 
The Dirichlet boundary condition imposed on $\partial \Omega$, the initial condition, and the source term $f$ are chosen to match $p$  with $K \equiv I_2$ in (\ref{eqn:main}), where $I_2$ is the identity matrix in $\mathbb R^{2\times2}$.  We note that the given $p$ exhibits a singularity at the re-entrant corner located at the origin of $\Omega$. In addition, $\nabla \cdot (\nabla p) = 0$ in this case. In the next section, we consider a case where $\nabla \cdot (\nabla p) \neq 0$, and we discuss the differences in the performance of the error estimator.

\subsubsection{Example 1.1. Convergence test with the uniform mesh}

First, we test the convergence using uniformly refined meshes, and compute the total error estimator $\eta^n$. Then, we will compare the error and $\eta^n$ 
to the mesh-adaptive case to verify the effectiveness of our adaptive algorithm.
Here, we test with six uniformly refined meshes where each cycle has 
{$h_0 = 2^{-1}$ to $h_0 = 2^{-6}$}.
We use EG-$\mathbb Q_1$ with the penalty term set to $\alpha = 1$  and backward Euler time discretization with a fixed time step $\delta t = 0.01$.

We compute the error by employing $\|\cdot \|_{l^\infty(0,T;H^1)}$ norm, and 
the error convergence rate with respect to the total numbers of degrees of freedom (Dofs) at the last time step is computed using the formula: 
\begin{equation*}
    \text{Order}_\text{Dofs}(j) := d\times \frac{\log(\text{Error}(j)) - \log(\text{Error}(j-1))}{\log(\text{Dofs}(j-1)) - \log(\text{Dofs}(j))},
\end{equation*}
for each cycle $j$.
The convergence results are shown in Figure  \ref{fig:Example1.1_error_a}.
where the optimal convergence rate of 0.66 in $\|\cdot \|_{l^\infty(0,T;H^1)}$ norm is observed.
%In Figure \ref{fig:Example1.1_LastErrorEstimation}, we show the error estimation $\eta^N$, $N = 50$ at $t = 0.5$, and the same convergence rate with the error is observed. 
In Figure \ref{fig:Example1.1_estimation_over_time}, for the given fourth-cycle~(where $h_0= 2^{-4}$), we show the $\eta^n$ \eqref{eq:estimators_total} for each time step  $n = 0,\cdots,N$. We  observe that $\eta^n$ values increase over time in this uniformly refined case.  In addition, the error estimator $\eta ^N$ at the final time step $N$ for each cycle is shown in Figure \ref{fig:Example1.1_LastErrorEstimation}.

% {\color{red} In Figure X, 
% for the given cycle X (where $h=X$), we show the $\eta^n$ \eqref{eq:estimators_total} for each time step $n=0,\cdots, N$.
%  We observe that $\eta^n$ value.... 
% Next, the error estimator $\eta^N$, at the final time step $N$ is shown for each cycles in Figure X. } 

%the error estimator $\eta^N$, which is $\eta^n$ \eqref{eq:estimators_total} at the final time step $N$ is shown for each cycles in Figure X. } 

% {\color{red}In Figure 1 b), we show that  $\eta_{l^\infty(0,T)}$ converges in the second order, where
% $\eta_{l^\infty(0,T)}:={\max_{0\leq n\leq N}}\eta^n$.}

% \begin{figure}[!h]
% \centering
% \begin{subfigure}[b]{0.45\textwidth}
% \centering
% \includegraphics[width=\textwidth]{Figure//Error.jpg}
% \caption{Error $\|p - p_h\|_{l^\infty(0,T;H^1)}$}
% \label{fig:Example1.1_error_a}
% \end{subfigure}
% %
% \begin{subfigure}[b]{0.45\textwidth}
% \centering
% \includegraphics[width=\textwidth]{Figure//Untitled-design-4.png}
% \caption{ Error estimation $\eta^n$ for selected cycles }
% \label{fig:Example1.1_estimation_over_time}
% \end{subfigure}
% \caption{Example 1.1-1.2. Convergence tests with uniform and adaptive mesh. }
% \label{fig:Example1.1_error}
% \end{figure}

\begin{figure}[H]
    \centering
\begin{subfigure}[b]{0.45\textwidth}
\centering
\makebox[\textwidth][c]{%
  \hspace*{-1.2cm}%
  \includegraphics[width=\textwidth]{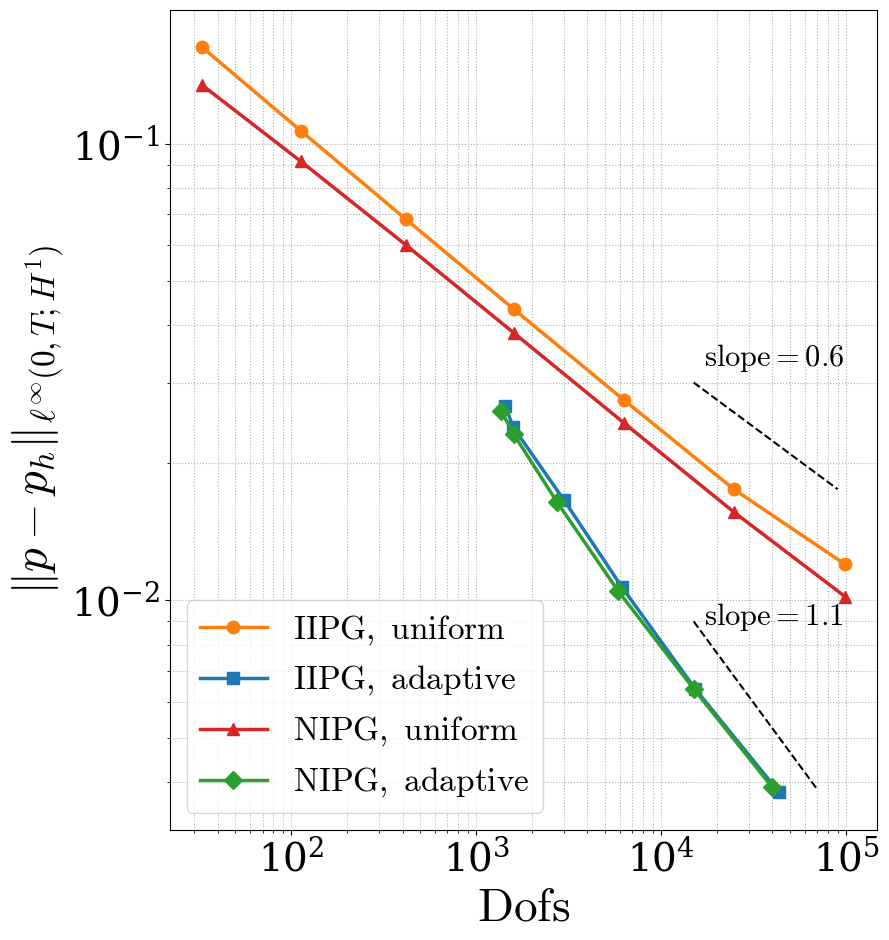}%
}
\caption{Error $\|p - p_h\|_{l^\infty(0,T;H^1)}$}
\label{fig:Example1.1_error_a}
\end{subfigure}\quad 
\begin{subfigure}[b]{0.45\textwidth}
\centering
\makebox[\textwidth][c]{%
  \hspace*{-0.7cm}%
\includegraphics[width=\textwidth]{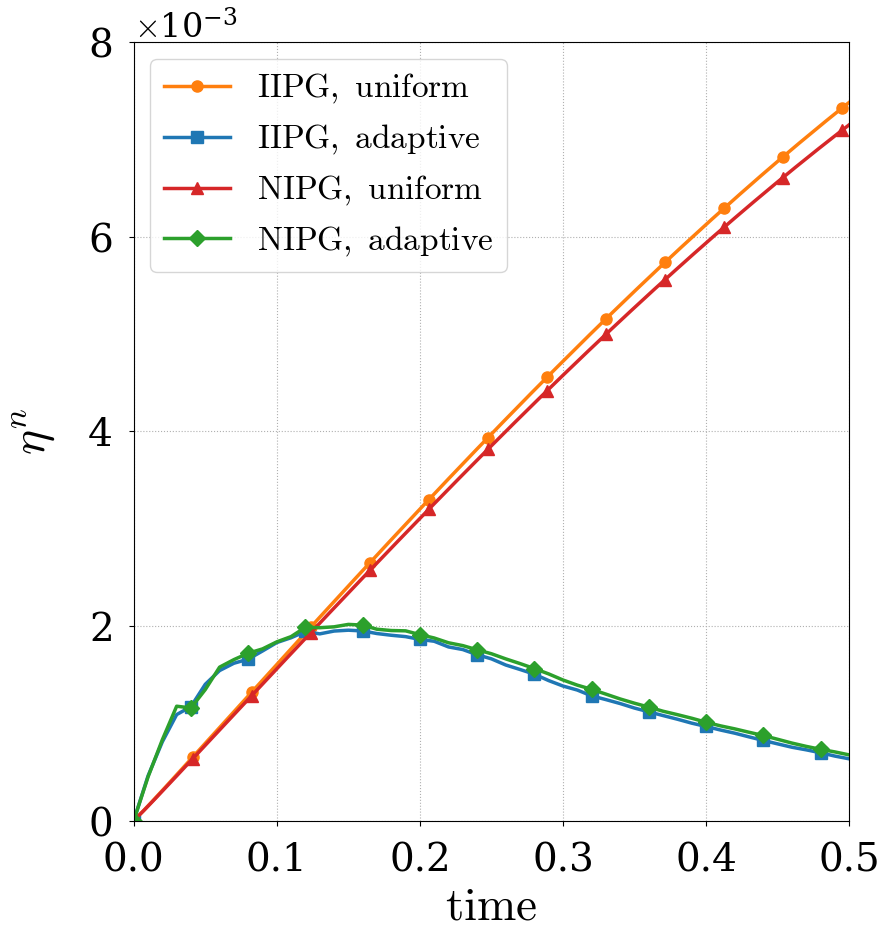}
}
\caption{Error estimation $\eta^n$ for selected cycles}
\label{fig:Example1.1_estimation_over_time}
\end{subfigure}
\caption{Example 1.1--1.2. Convergence tests with uniform and adaptive meshes using the IIPG and NIPG formulations. }
\label{fig:Example1.1_error}
\end{figure}

%\begin{figure}[H]
 %   \centering
    % \begin{subfigure}[b]{0.5\textwidth}
  %      \centering
        %\includegraphics[width=0.5\textwidth]{Figure/Error.jpg}
        % 
        
    % \end{subfigure}%
    %\hfill
    % \begin{subfigure}[b]{0.5\textwidth}
    %     \centering
    %     \includegraphics[width=\textwidth]{Figure/Last_Estimation.jpg}
    %     \caption{Error estimation $\eta^N$ at $t = 0.5.$}
    % \label{fig:Example1.1_LastErrorEstimation}
    % \end{subfigure}%
    % \label{fig:Example1.1_error}
%\end{figure}

% \begin{table}[H]
%     \centering
%     \begin{tabular}{c|c|c|c|c|c}
%   $h$& Dofs& $\|\cdot\|_{l^\infty(0,T;H^1)}$ &   Order & $\eta_{l^\infty(0,T)}  $& Order \\
%    \hline
% $2^{-2}$& 113  &0.1067 &   - &0.2045 &    - \\
% $2^{-3}$& 417  &0.0683 &0.64 &0.0511 & 2.00\\
% $2^{-4}$& 1601 &0.0434 &0.65 &0.0128 & 2.00 \\
% $2^{-5}$& 6273 &0.0275 &0.66 &0.0032 & 2.00  \\
%     \end{tabular}
%     \caption{Example 1.1. A convergence results with uniform mesh (EG-$\mathbb Q_1$). }
%     \label{tab:1}
% \end{table}

\subsubsection{Example 1.2. Convergence test with the adaptive mesh}

Next, we test convergence using mesh adaptivity. We perform \( h \)-refinement over six cycles, adaptively refining the mesh at each time step with Algorithm \ref{alg:h-adapt} using a refinement threshold of \( \theta_{\text{refine}} = 10\% \). However, we do not coarsen the mesh or impose any tolerance criterion.
For each cycle, the initial mesh is uniform with mesh sizes given by $h_0 = 1$ to $h_0=2^{-5}$. 
The minimal size for the mesh after the refinement for each cycle is $h_\text{min} = 2^{-7},   2^{-7},   2^{-8},   2^{-9},  2^{-12}, \text{and } 2^{-14}$, respectively.  
The time step is fixed at $\delta t = 0.01$.
The adaptive refinement patterns obtained with the IIPG and NIPG formulations are visually indistinguishable. Therefore, we show only the IIPG mesh in Figure~\ref{fig:Example1-2_mesh}. For the fifth cycle ($h_0=2^{-4}$), the adaptive meshes at $t=0.1$, $0.25$, and $0.5$ are illustrated in Figure~\ref{fig:Example1-2_mesh}.

\begin{figure}[H]
    \centering
    %\hspace*{-1.5cm}
    \begin{subfigure}[b]{0.33\textwidth}
        \centering
        \includegraphics[width=\textwidth]{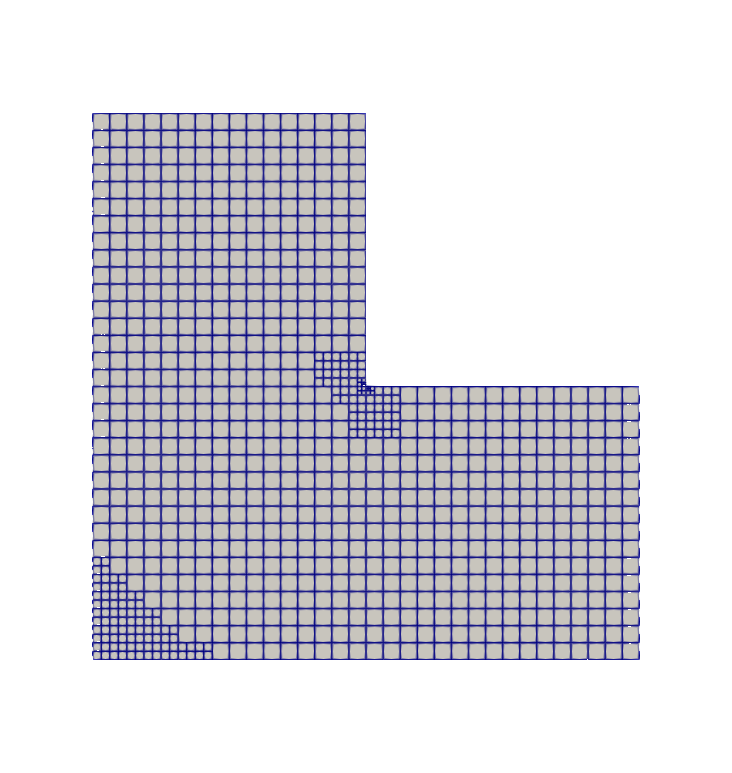}
        \caption{$t = 0.1$}
        \label{fig:p1_mesh1}
    \end{subfigure}%
    %\hfill
    \begin{subfigure}[b]{0.33\textwidth}
        \centering
        \includegraphics[width=\textwidth]{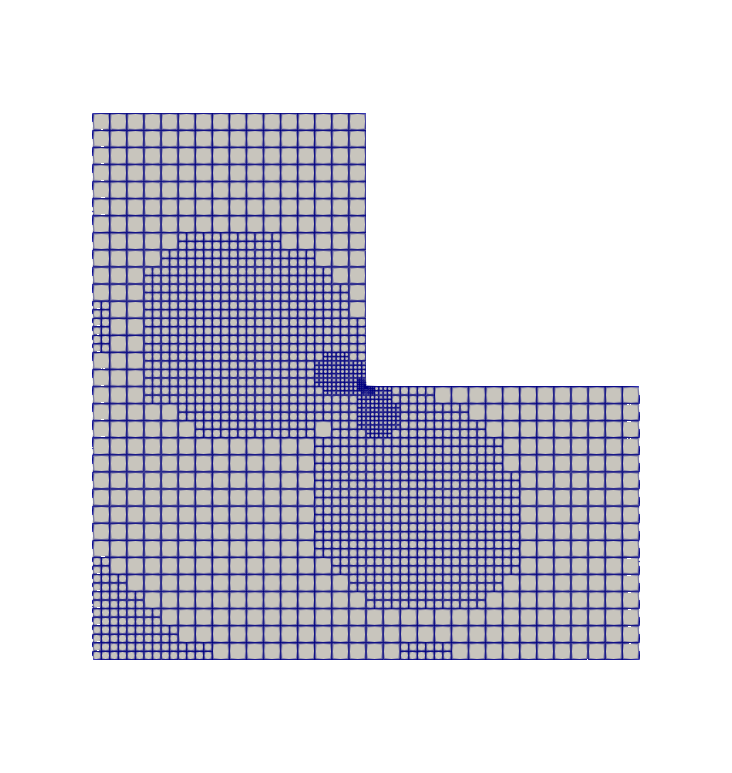}
        \caption{$t=0.25$}
    \end{subfigure}%
    %\hfill
    \begin{subfigure}[b]{0.33\textwidth}
        \centering
        \includegraphics[width=\textwidth]{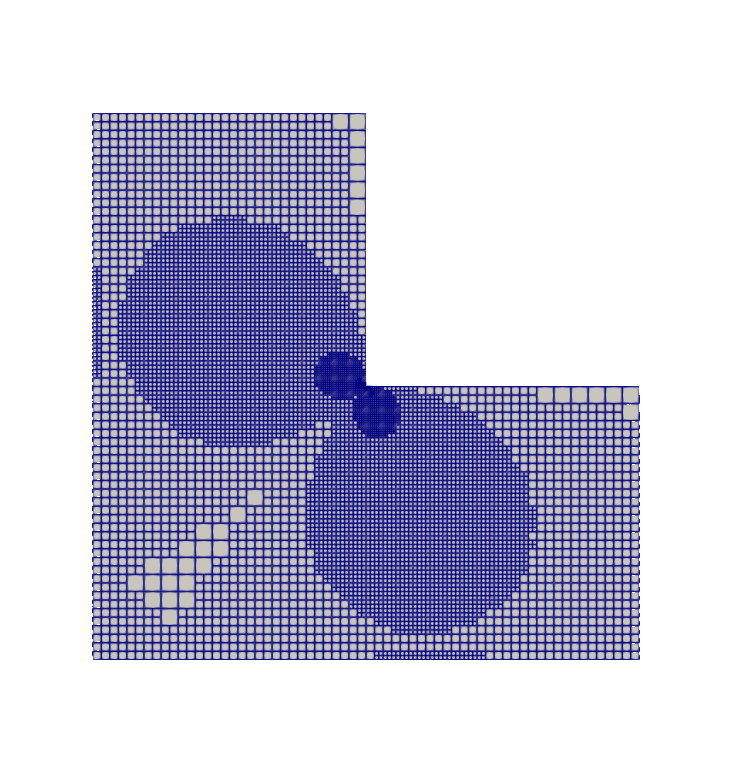}
        \caption{$t=0.5$}
        \label{fig:p1_mesh3}
    \end{subfigure}%
    \caption{Example 1.2. The $h$--adaptive meshes at $t=0.1$, $0.25$, and $0.5$ in the fifth cycle ($h_0=2^{-4}$).}
    \label{fig:Example1-2_mesh}
\end{figure}

The convergence of the errors is illustrated in Figure \ref{fig:Example1.1_error_a}, and the convergence rate achieve the optimal order with 1.1 in $\|\cdot\|_{l^\infty(0,T;H^1)}$ norm. {We note the error with the adaptivity is less than the uniform mesh with the similar number of Dofs. The comparison is shown in Figure\ref{fig:Example1.1_error_a}. 
Moreover, we emphasize that we obtain the  better convergence rate with the adaptivity.

% {\color{red} different notation between the initial mesh and refine mesh $h_0$ $h_{\text{min}}$}

Next, in Figure \ref{fig:Example1.1_estimation_over_time}, we show the $\eta^n$ for each time step $n = 0,\ldots, N$, 
for the cycle $i=3$, where the initial mesh is $h_0=2^{-3}$, and the minimal mesh size is $h_\text{min} = 2^{-8}$.
We observe that $\eta^n$ initially increases when the mesh is too coarse, but it soon decreases as the mesh is refined adaptively. We emphasize that at the final time step, the number of Dofs is similar to that of the uniformly refined mesh with $h_0=2^{-4}$, yet the error estimator $\eta^N$ is significantly lower when using the adaptive mesh. Figure~\ref{fig:Example1.1_LastErrorEstimation} shows $\eta^N$ for each cycle, illustrating that the error estimators from the adaptive mesh are lower than those from the uniform mesh.

%We observe that $\eta^n$ increase at the beginning when the mesh is too coarse. But sooner it  decrease, as the mesh is refined adaptively. We emphasize that at the last time step, the number of degree of freedoms is similar to the uniformly refined mesh with $h_0=2^{-4}$, 
%but the error estimation at the last time step $N$, $\eta^N$,  is significantly less using adaptive mesh. We also show the error estimation $\eta^N$ for each cycle in Figure \ref{fig:Example1.1_LastErrorEstimation}, which we note that the error estimations from adaptive mesh are less than from uniform mesh.

% The error estimation $\eta^N$ at the last time step $N$ is also shown in Figure \ref{fig:Example1.1_LastErrorEstimation}, where the values decrease for each cycles.

% The error estimation $\eta^N$, $N = 50$ at $t= 0.5$ is  reported in Figure \ref{fig:Example1.1_LastErrorEstimation}, which is less than the values from uniform mesh}

\begin{figure}[H]
\centering
        \centering
        \includegraphics[width=0.45\textwidth]{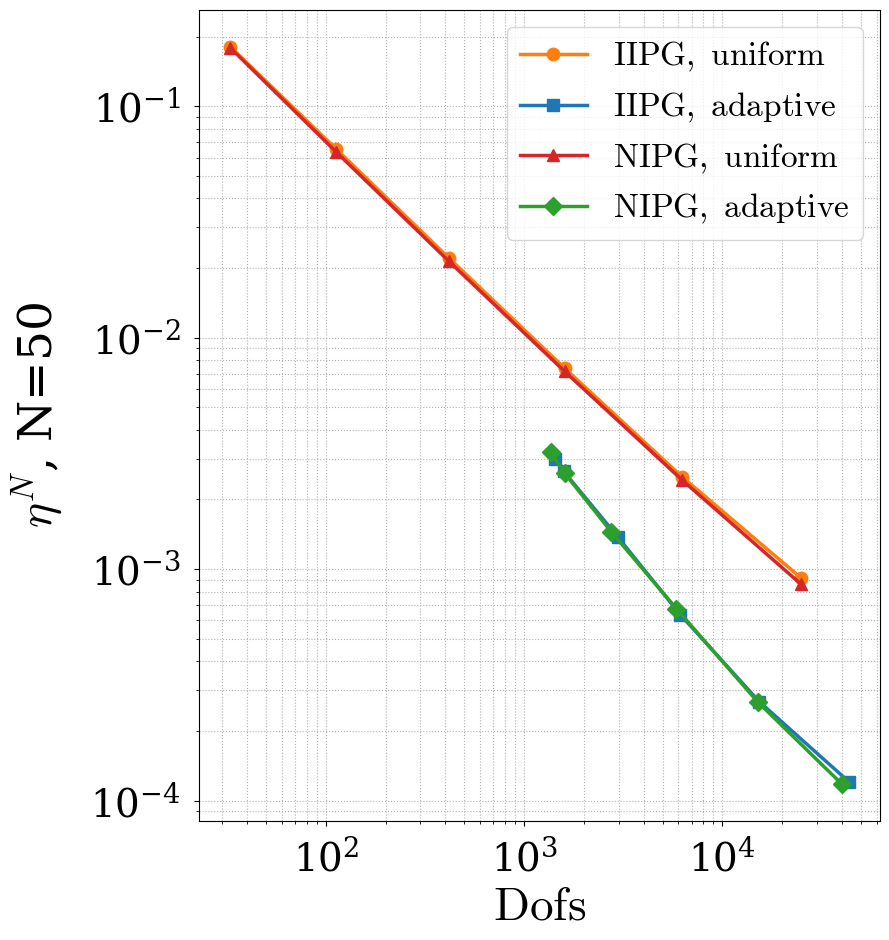}
    \caption{Example 1.1-1.2 The Error estimations $\eta^N$ at $t = 0.5$ with uniform and adaptive mesh using the IIPG and NIPG formulations. }
    \label{fig:Example1.1_LastErrorEstimation}
\end{figure}
% The convergence results
% with the minimal mesh size $h_\text{min}$ for each cycle
% are presented in the Table \ref{tab:example1_extra}. 
% The convergence order with respect to $\|\cdot\|_{l^\infty(0,T;H^1)}$ achieve the expected optimal order of $0.51$.

% {\color{red}Figure for $\eta^\infty$..}

%{\color{red}while, on the other hand, the convergency order with respect to $\|\cdot\|_{l^\infty(0,T;L^2)}$ is deteriorated.}
% However, we remark that, as the {\color{green} (the $\|\mathcal R\|_*$ for Poisson equation is equivalent to $H_1$ norm by Poincar/'e inequality, and each time step, we are actually solving Poinsson (needs to put cita)), }
% \begin{table}[H]
%     \centering
% \begin{tabular}{c|c|c|c|c}
%    $h_0$ & $h_\text{min}$ & Dofs & $\|\cdot\|_{l^\infty(0,T;H^1)}$ & $\text{Order}_\text{Dofs}$ \\
%    \hline
%    $2^{-2}$ & $2^{-8}$  & 2981     & 0.0166 & -\\
%    $2^{-3}$ & $2^{-9}$  & 6141    & 0.0107 & 0.61\\ 
%    $2^{-4}$ & $2^{-11}$ & 15320  & 0.0064 & 0.55\\ 
%    $2^{-5}$ & $2^{-13}$ & 43539  & 0.0038 & 0.51
% \end{tabular}
%     \caption{Example 1.2. A convergence results with $h$--adaptivity (EG-$\mathbb Q_1$). }
%     \label{tab:example1_extra}
% \end{table}

%{\color{red}TODO: we need to show the convergence of errors with mesh adaptivity~\cite{Bonito_Canuto_Nochetto_Veeser_2024}}

\subsubsection{Example 1.3. Coarsening with tolerance and effectivity index}
The convergence histories and adaptive refinement patterns obtained with the IIPG and NIPG formulations are almost indistinguishable in Examples 1.1 and 1.2. Therefore, to avoid redundant figures, we report only the IIPG results in the remaining examples.

In this experiment, we perform adaptive mesh refinement as described in Algorithm~\ref{alg:h-adapt}, incorporating coarsening and a tolerance of \( \tau = 1\times 10^{-3} \). The coarsening and refinement parameters are set to \( \theta_\text{coarse} = 50\% \) and \( \theta_\text{refine} = 40\% \), respectively. Compared to Example 1.2, we first coarsen the mesh and then iteratively refine it until the specified tolerance is achieved at each time step, starting from a uniform mesh with \( h_0 = 2^{-4} \). All other settings remain the same as in the previous examples. The mesh refinements at times \( t = 0.1 \), \( 0.25 \), and \( 0.5 \) are shown in Figure~\ref{fig:Example1.3_mesh}, with a minimal mesh size of \( h_\text{min} = 2^{-10} \).
%%%
\begin{figure}[H]
    \centering
    %\hspace*{-1.5cm}
    \begin{subfigure}[b]{0.33\textwidth}
        \centering
        \includegraphics[width=\textwidth]{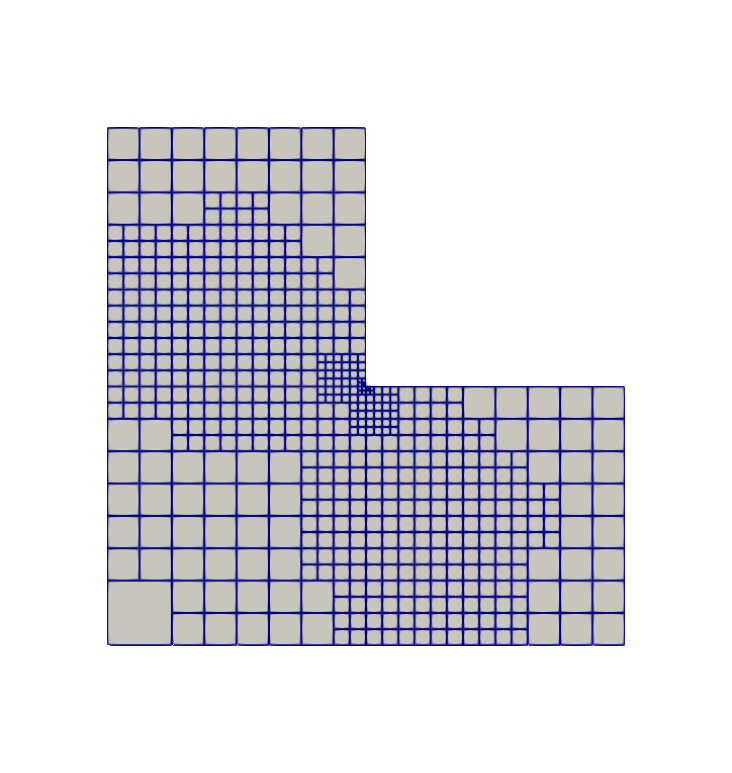}
        \caption{$t = 0.1$}
        \label{fig:p1_mesh1}
    \end{subfigure}%
    %\hfill
    \begin{subfigure}[b]{0.33\textwidth}
        \centering
        \includegraphics[width=\textwidth]{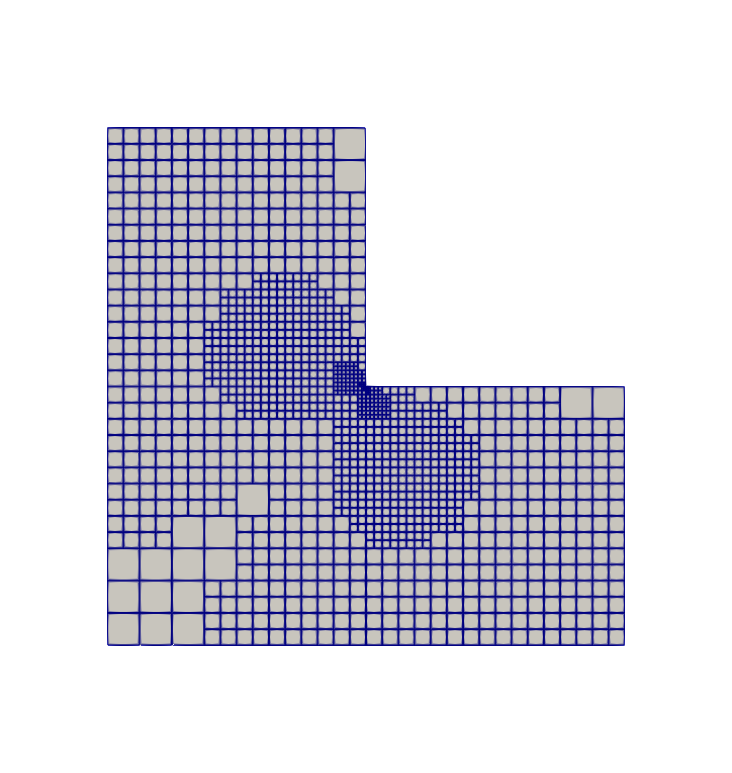}
        \caption{$t=0.25$}
    \end{subfigure}%
    %\hfill
    \begin{subfigure}[b]{0.33\textwidth}
        \centering
        \includegraphics[width=\textwidth]{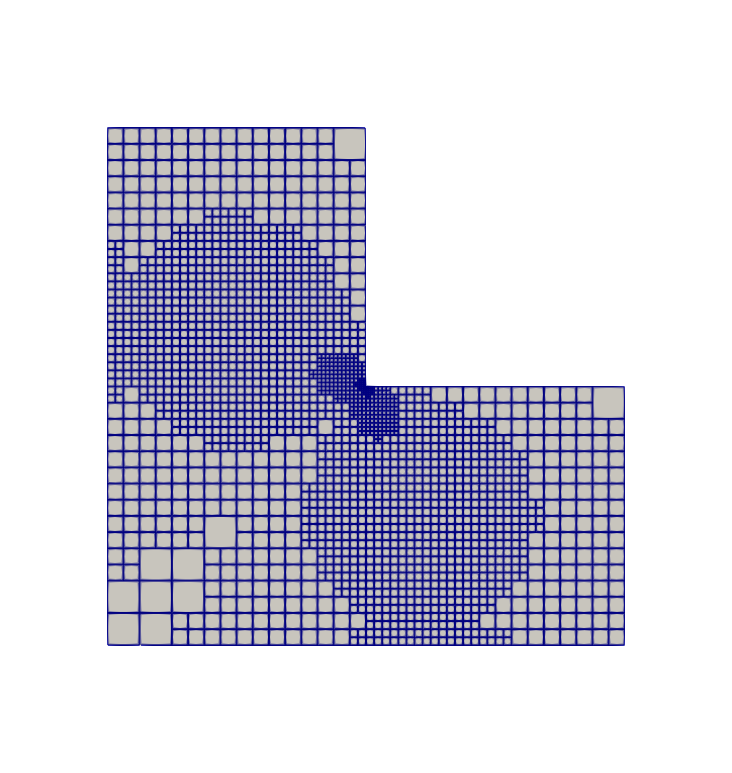}
        \caption{$t=0.5$}
        \label{fig:p1_mesh3}
    \end{subfigure}%
    \caption{Example 1.3. The $h$--adaptive mesh at $t=0.1,0.25$, and $0.5$ with $\tau = 1\times 10^{-3}$, $\theta_\text{coarse}=50\%$, and $\theta_\text{refine}=40\%$.}
    \label{fig:Example1.3_mesh}
\end{figure}

{
We emphasize that in the uniform mesh refinement setting described in Example~1.1, the minimum mesh size and the number of Dofs required to ensure that 
{$\eta^n \leq \tau$} for every time step is 
\(h_0 = 2^{-6}\), and around $25,000$, respectively. 

% but in the refinement case, 
% to satisfy eta^n \leq \tau 
%the minimum mesh size 
%the number DOF

In Figure \ref{fig:Example1.3_ErrorEstimation}, we show the error estimation $\eta^n$ for each time $t_n$ ($n = 1,..., N$) {in this adaptive refinement case}.
We note that 3 or 4 iterations in Algorithm \ref{alg:h-adapt} were performed.
%for the error estimation $\eta^n$ to satisfy the tolerance.
{Thus, in the adaptive mesh refinement case, the minimum mesh size and the number of Dofs to satisfy {$\eta^n \leq \tau$} for every time step is 
\(h_\text{min} = 2^{-10} \), and around $5,245$, respectively. The number of Dofs is from the last time step (see Figure  \ref{fig:Example1.3_Dofs}) and we note that it is significantly less then the uniformly refined case and validates the efficiency of our adaptive mesh refinement algorithm. }

Moreover, Figure~\ref{fig:Example1.3_Error} shows the error $|\cdot|_{l^\infty(0,t_n;H^1)}$ at each time step. At the final time $t_N$, the error is approximately $0.01$, which is smaller than the corresponding error of approximately $0.015$ obtained using a uniformly refined mesh with mesh size $h_0=2^{-6}$; see Figure~\ref{fig:Example1.1_error}. The corresponding effectivity index $\mathrm{EI}*n$ is presented in Figure~\ref{fig:Example1.3_EI}. We observe that $\mathrm{EI}*n$ appears to approach a constant value of approximately $0.3$, suggesting that $\eta*{l^\infty(0,T)}$ tracks the error in $|\cdot|*{l^\infty(0,T;H^1)}$ up to an approximately constant factor.

}

\begin{figure}[H]
\centering
\begin{subfigure}[b]{0.45\textwidth}
        \centering
        \includegraphics[width=\textwidth]{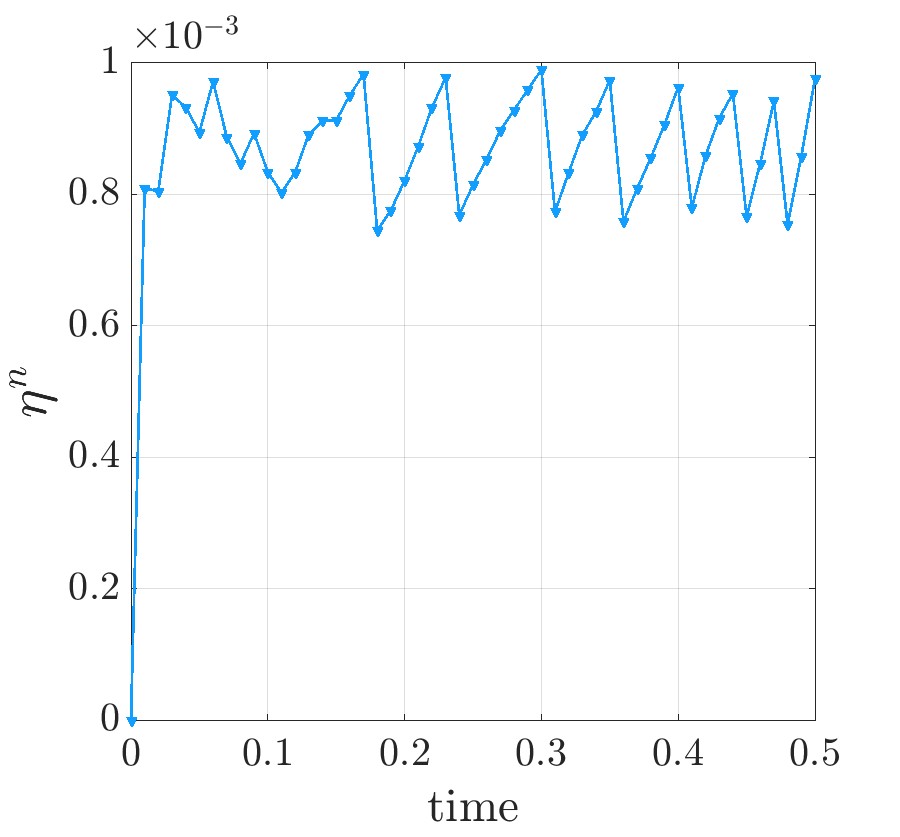}
        \caption{Error estimation}
    \label{fig:Example1.3_ErrorEstimation}
    \end{subfigure}%
    \begin{subfigure}[b]{0.45\textwidth}
        \centering
        \includegraphics[width=\textwidth]{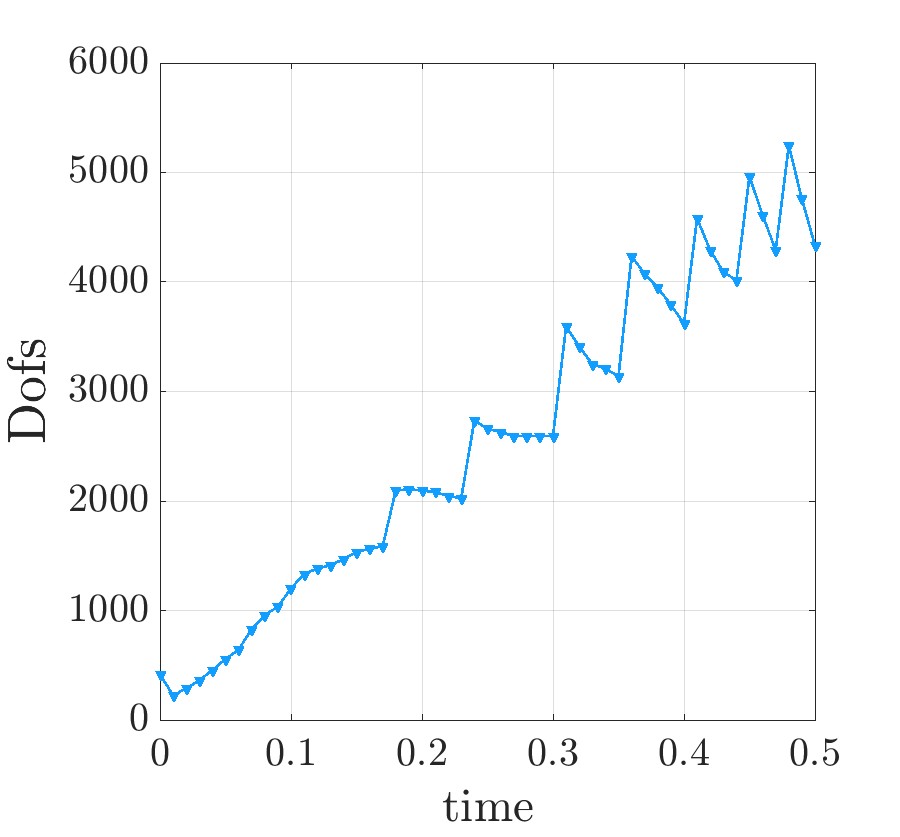}
        \caption{Degree of freedom}
    \label{fig:Example1.3_Dofs}
    \end{subfigure}%
    
    \begin{subfigure}[b]{0.45\textwidth}
        \centering
        \includegraphics[width=\textwidth]{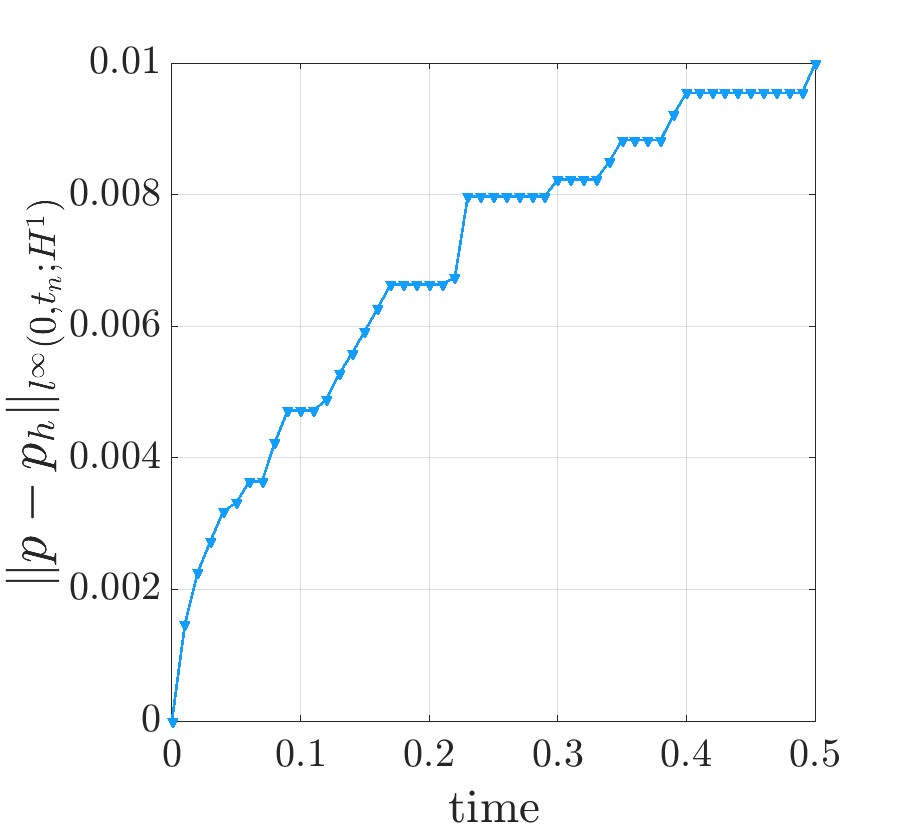}
        \caption{ Error}\label{fig:Example1.3_Error}
    \end{subfigure}
        \begin{subfigure}[b]{0.45\textwidth}
        \centering
        \includegraphics[width=\textwidth]{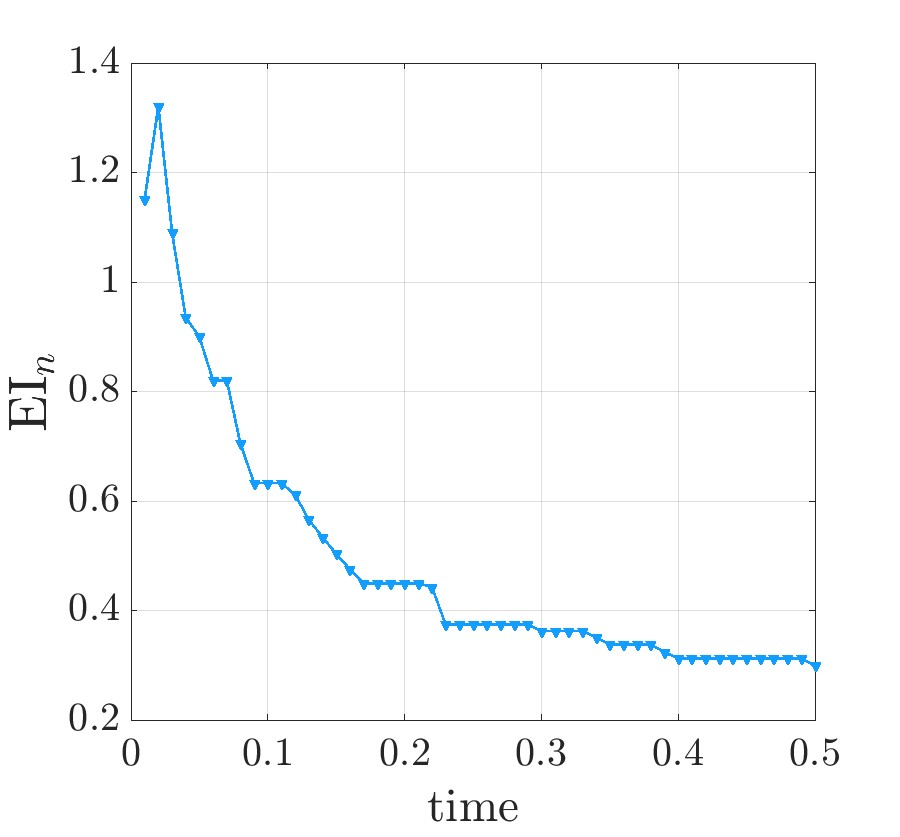}
        \caption{Efficiency index }\label{fig:Example1.3_EI}
    \end{subfigure}
    \caption{Example 1.3. The results of adaptive mesh with $\tau=1\times 10^{-3},$ $\theta_\text{coarse}=50\%$, and $\theta_\text{refine}=40\%$. }
\end{figure}

\subsection{Example 2. A solution with a singularity and nonzero divergence in a L-shaped domain}

In this example, we consider a different  solution defined by
\begin{equation}
p(x,y,t) = \hat{p}(r,\phi,t) = \Bigl(r^2\cos^2\phi - 1\Bigr)\Bigl(r^2\sin^2\phi - 1\Bigr) 
\sin\left(\frac{\pi}{2}t\right) r^{2/3}\sin\left(\frac{2\phi}{3}\right),
\label{eq:p_2}
\end{equation}
in the L-shaped domain $\Omega=(-1,1)^2\setminus[0,1]^2$ for $t\in[0,T]$. We impose Dirichlet boundary conditions on $\partial \Omega$ and choose the initial condition and source term $f$ such that $p$ satisfies (\ref{eqn:main}) with $K \equiv I_2 \in \mathbb R^{2\times 2}$.
This solution exhibits a singularity at the origin of $\Omega$, as in Example 1. However, the main difference is that its divergence is nonzero, i.e., 
$\nabla \cdot (K \nabla p) \neq 0.$

We note that in this example the local error estimator $\eta^n_T$, defined in (\ref{eq:estimators_local}), may not effectively guide adaptive mesh refinement when using the $\mathbb{Q}_1$ element. This is because the cell residual $\eta^n_1$ in (\ref{eq:estimators_local}) depends on $\nabla \cdot (K \nabla p^n_h)$, which vanishes when using the $\mathbb{Q}_1$ element but remains nonzero for the $\mathbb{Q}_2$ element. Therefore, we perform adaptive mesh refinement using both the EG-$\mathbb{Q}_1$ and EG-$\mathbb{Q}_2$ elements and compare their performance.

For this test, the tolerance, coarsening and refinement parameters are set as $\tau = {5\times 10^{-4}}$,  $\theta_\text{coarse} = {40\%}$, and $\theta_\text{refine} = {20}\%$, respectively, starting from a uniform mesh with $h_0  =2^{-2}$, with a fixed time step $\delta t= 0.01$, a final computation time $T = 0.5$, and the penalty term $\alpha = 1$. 

Figure \ref{fig:Example2_comparison_dofs} shows the number of Dofs at each time step that required to satisfy $\eta^n \leq \tau$. We observe that the number of Dofs for EG-$\mathbb Q_1$ reaches to 160,389 with $h_\text{min} = 2^{-12}$, while for EG-$\mathbb Q_2$, it remains significantly lower at only 10,061, with larger $h_\text{min}=2^{-9}$

\begin{figure}
    \centering
    \includegraphics[width=0.5\linewidth]{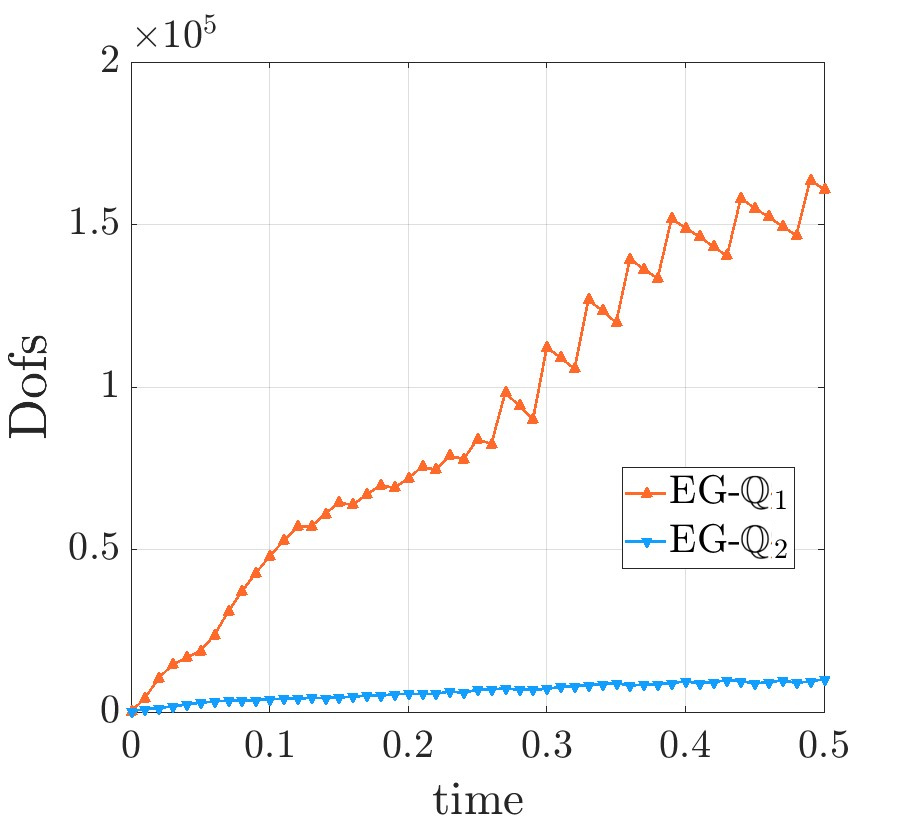}
    \caption{Example 2. The numbers of degree of freedom at each time step that required to satisfy $\eta^n \leq \tau$.  }
    \label{fig:Example2_comparison_dofs}
\end{figure}

Next, we show the mesh refinement at time $t = 0.1, 0.25, $ and $0.5$ in  Figure \ref{fig:Example2_mesh}. The first row (Figures \ref{fig:p2_Q1_mesh0}--\ref{fig:p2_Q1_mesh2}) corresponds to EG-$\mathbb Q_1$, and the second row (Figures \ref{fig:p2_Q2_mesh0}--\ref{fig:p2_Q2_mesh2}) shows the refinement for  EG-$\mathbb Q_2$. We observe that for EG-$\mathbb Q_2$, the mesh have more refinement around the singularity, whereas for EG-$\mathbb Q_1$, the refinement is excessively distributed almost everywhere. 

Finally, we present the error and the effectivity index EI$_n$ in Figure \ref{fig:Example2_Error}, and Figure \ref{fig:Example2_Effectivity index}, respectively, per time step $t_n,n=0,\cdots,N$. We observe that, even though EG-$\mathbb Q_1$ has an excessive number of Dofs compared to EG-$\mathbb Q_2$, its error remains nearly twice as large as that of EG-$\mathbb Q_2$ for each time step. Moreover, 
for EG-$\mathbb Q_2$, EI$_n$ converges to 1.5 at the last time step, indicating the error $\|\cdot\|_{l^\infty(0,T;H^1)}$ is approximately equal to $1.5\times\eta_{l^\infty(0,T)}$. In contrast, for EG-$\mathbb Q_1$, EI$_n$ converges up to 3.5, suggesting the error estimation may less effective when using EG-$\mathbb Q_1$.

\begin{figure}[H]
    \centering
    \begin{subfigure}[b]{0.33\textwidth}
        \centering
        \includegraphics[width=\textwidth]{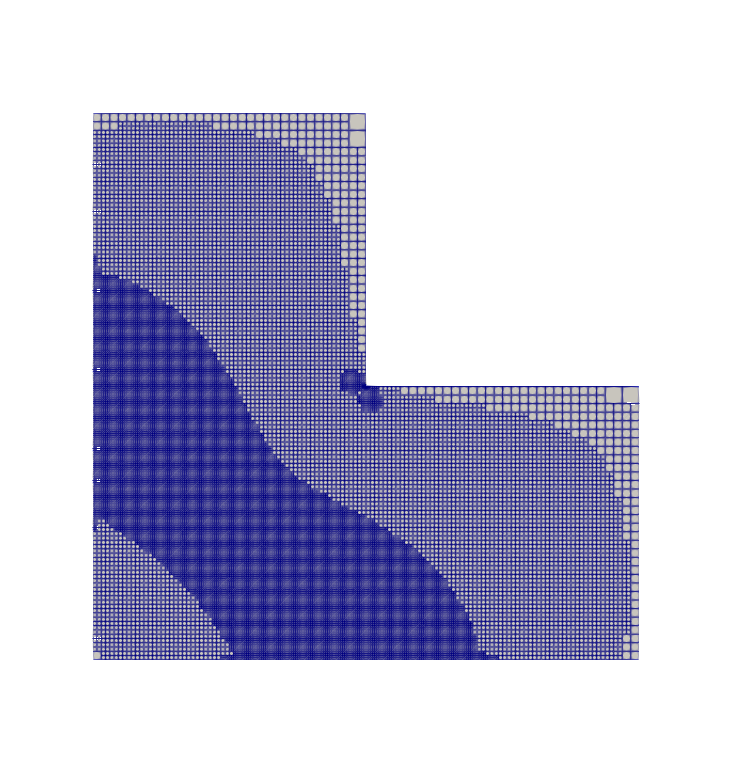}
        \caption{$t = 0.1$}
        \label{fig:p2_Q1_mesh0}
    \end{subfigure}%
    \hfill
    \begin{subfigure}[b]{0.33\textwidth}
        \centering
        \includegraphics[width=\textwidth]{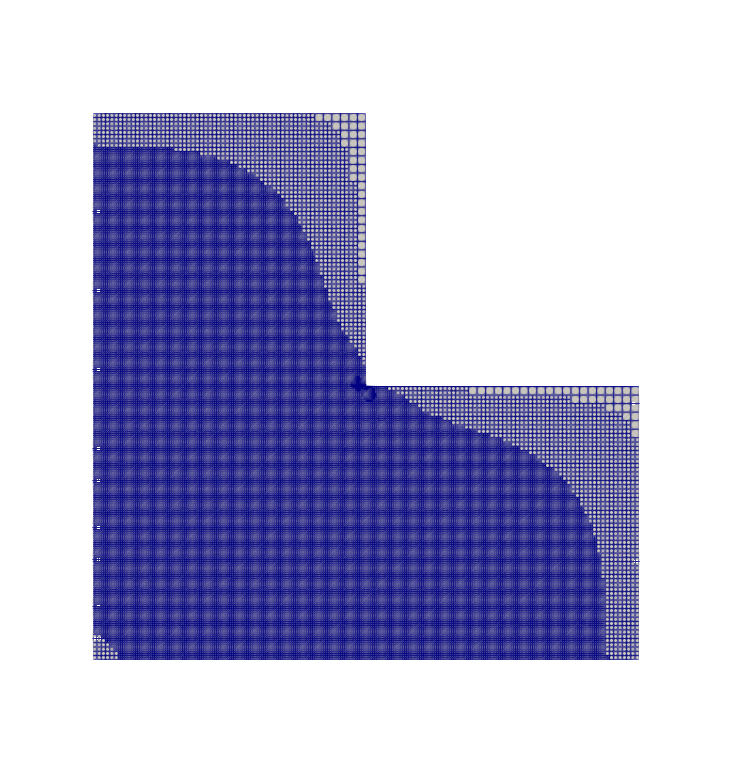}
        \caption{$t=0.25$}
        \label{fig:p2_Q1_mesh1}
    \end{subfigure}%
    \hfill
    \begin{subfigure}[b]{0.33\textwidth}
        \centering
        \includegraphics[width=\textwidth]{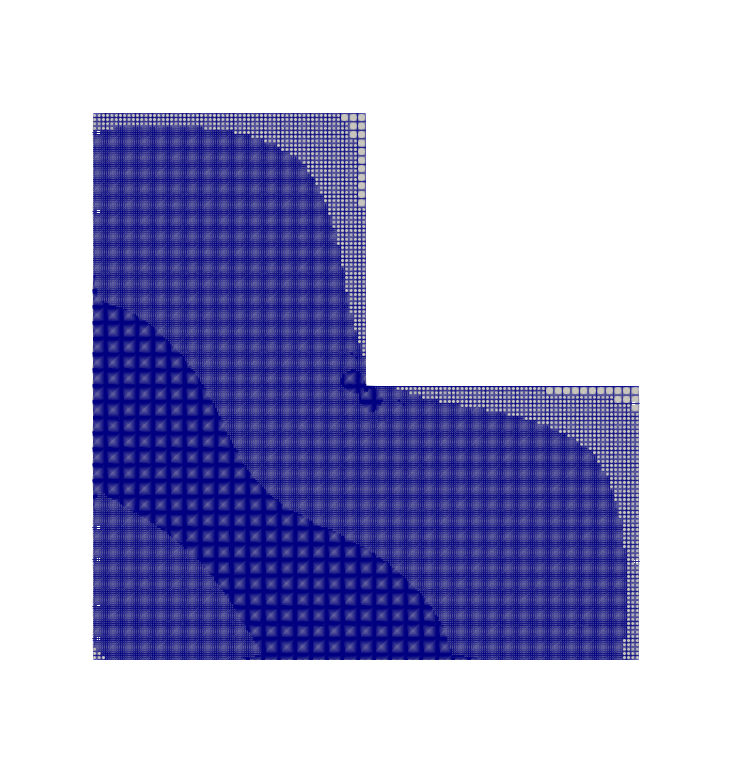}
        \caption{$t=0.5$}
        \label{fig:p2_Q1_mesh2}
    \end{subfigure}%
    
    %\hspace*{-1.5cm}
    \begin{subfigure}[b]{0.33\textwidth}
        \centering
        \includegraphics[width=\textwidth]{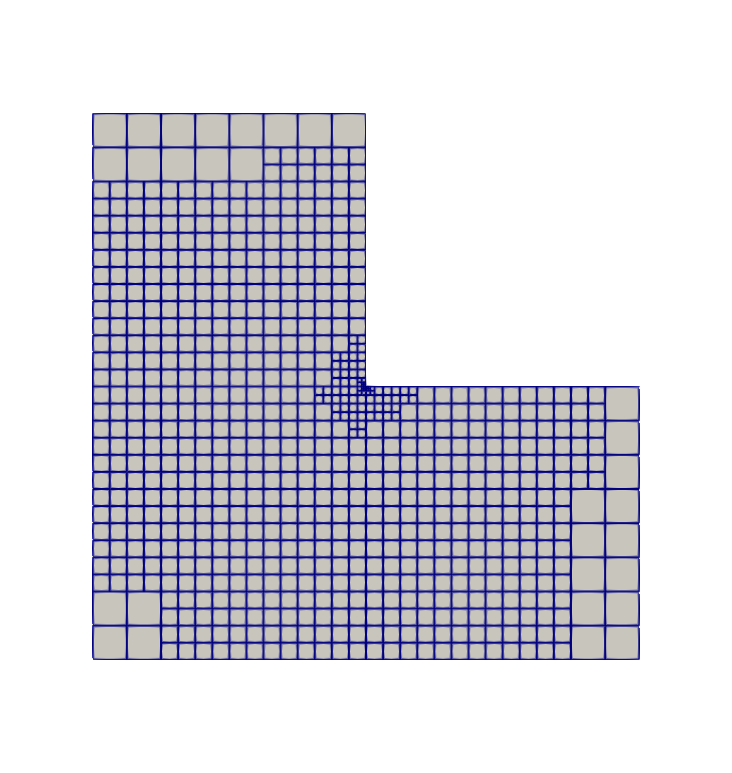}
        \caption{$t = 0.1$}
        \label{fig:p2_Q2_mesh0}
    \end{subfigure}%
    %\hfill
    \begin{subfigure}[b]{0.33\textwidth}
        \centering
        \includegraphics[width=\textwidth]{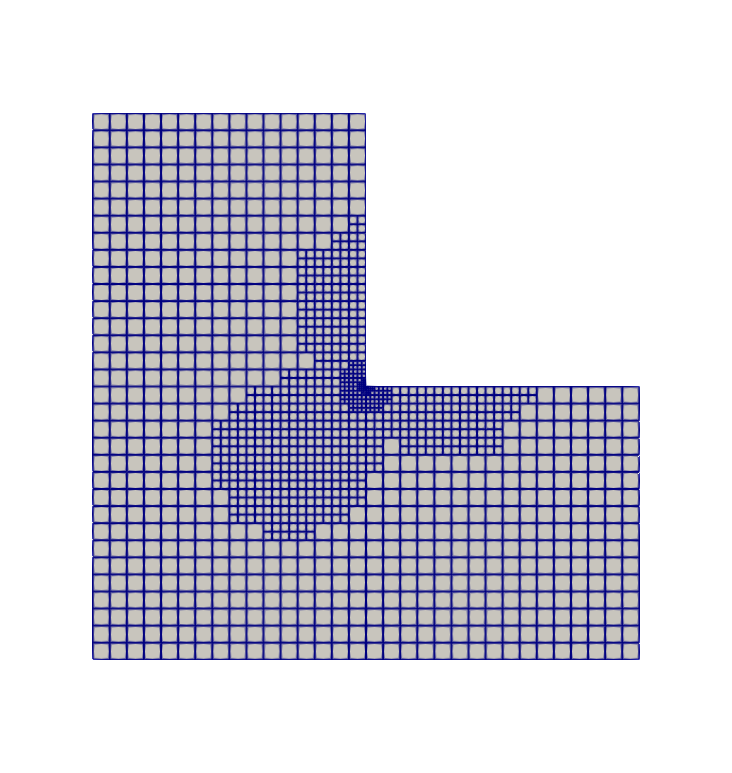}
        \caption{$t=0.25$}
        \label{fig:p2_Q2_mesh1}
    \end{subfigure}%
    %\hfill
    \begin{subfigure}[b]{0.33\textwidth}
        \centering
        \includegraphics[width=\textwidth]{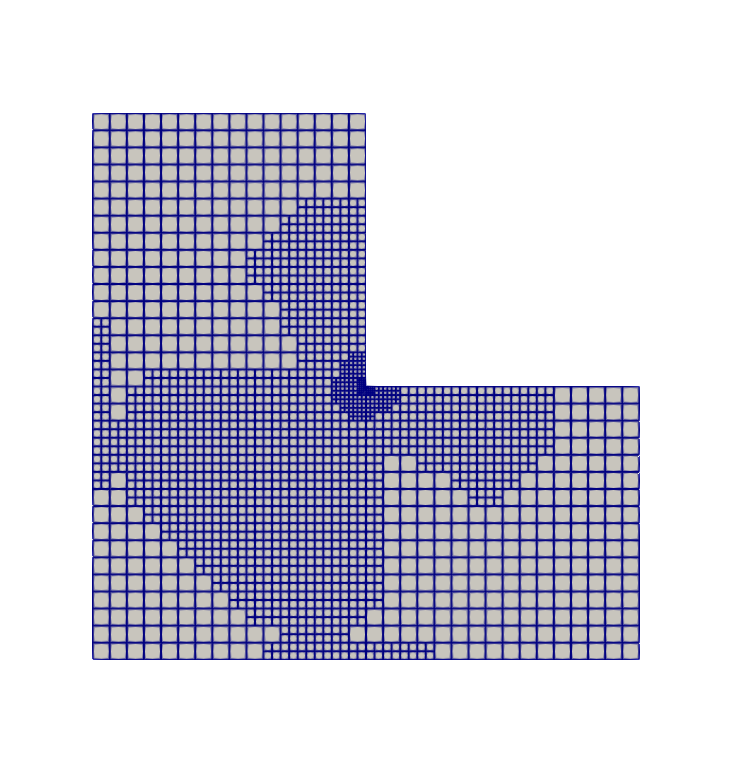}
        \caption{$t=0.5$}
        \label{fig:p2_Q2_mesh2}
    \end{subfigure}%
    \caption{Example 2. The $h$--adaptive mesh using EG-$\mathbb Q_1$ (the first row) and EG-$\mathbb Q_2$ (the second row) with $\tau = 5\times 10^{-4}$, $\theta_\text{coarse}=40\%$, and $\theta_\text{refine}=20\%$ at $t=0.1,0.25$, and $0.5$ (columns). }
    \label{fig:Example2_mesh}
\end{figure}

\begin{figure}[H]
\centering
    \begin{subfigure}[b]{0.45\textwidth}
        \centering
        \includegraphics[width=\textwidth]{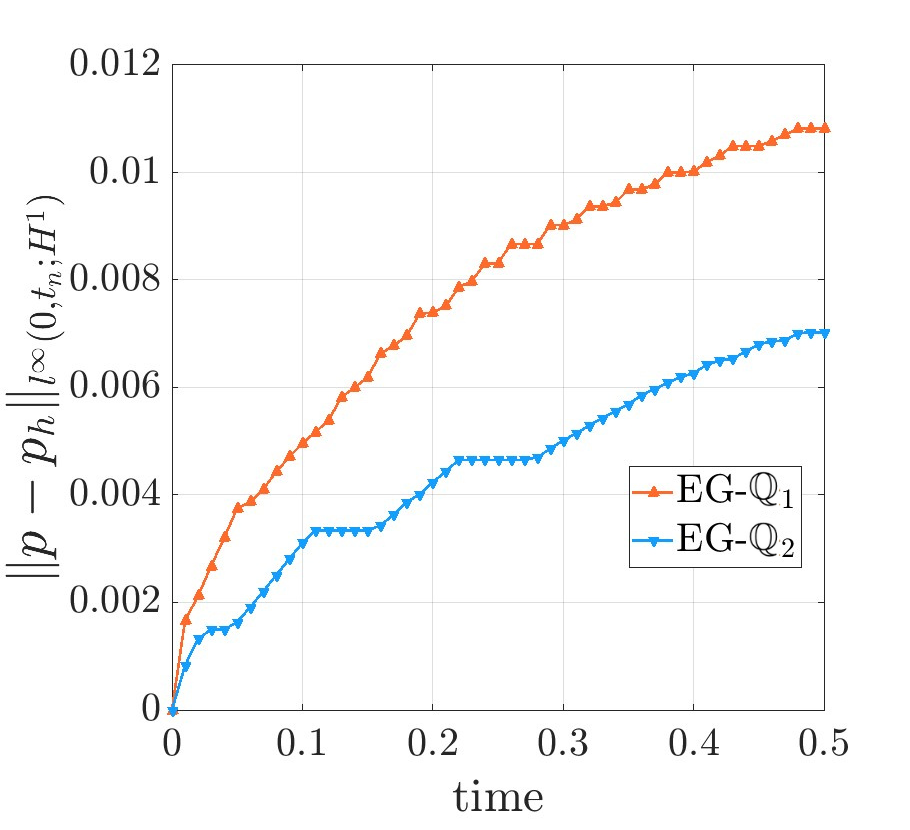}
        \caption{Error}\label{fig:Example2_Error}
    \end{subfigure}
    \begin{subfigure}[b]{0.45\textwidth}
        \centering
        \includegraphics[width=\textwidth]{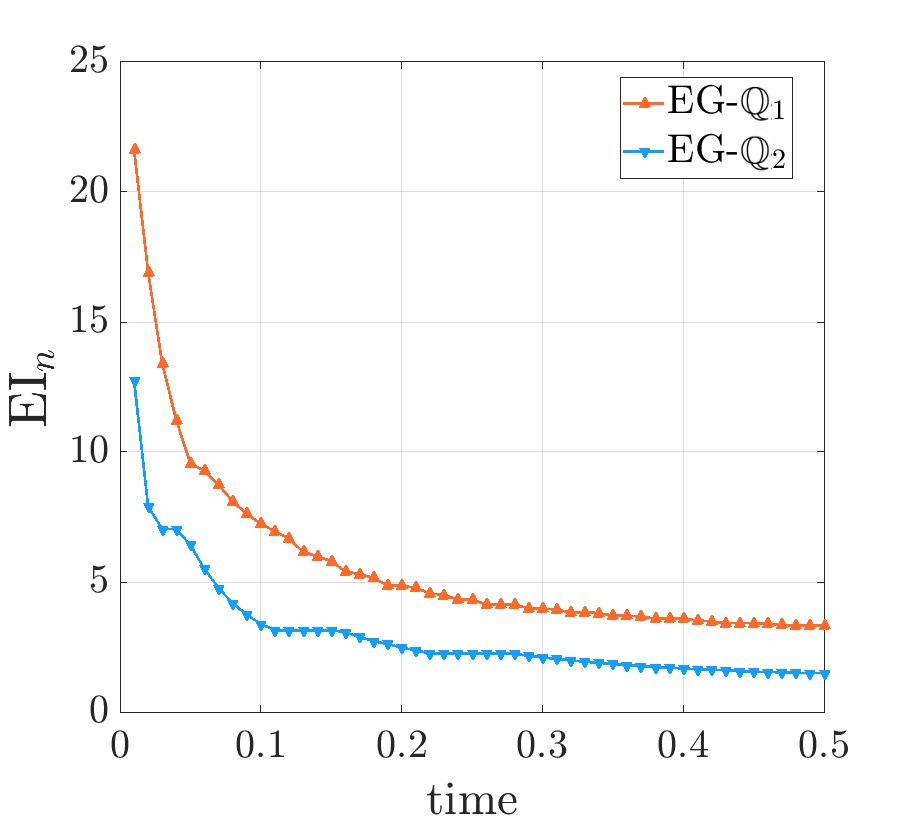}
        \caption{Effectivity index}
    \label{fig:Example2_Effectivity index}
    \end{subfigure}%
    \caption{Example 2. The error and effectivity index using EG-$\mathbb Q_1$ and EG-$\mathbb Q_2$ with adaptive mesh refinement with $\tau = 5\times 10^{-4}$, $\theta_\text{coarse}=40\%$, and $\theta_\text{refine}=20\%$. }
\end{figure}

\clearpage
\newpage

\subsubsection{Example 2.1. Adaptive refinement on distorted quadrilateral meshes}

The setup of this example is the same as in Example~2, except that the initial quadrilateral mesh is randomly distorted to test the robustness of the estimator on general non-rectangular quadrilateral meshes. We use the EG-$\mathbb Q_2$ discretization with the same time step, tolerance, and marking parameters as in Example~2. The boundary conditions, exact solution, source term, and final time are unchanged.

Figure~\ref{fig:Example2_1_mesh_comparison} compares the original and distorted initial meshes. The distorted mesh is obtained by perturbing the interior vertices while keeping the boundary vertices fixed. This experiment is intended to assess the robustness of the proposed estimator and adaptive refinement strategy on general quadrilateral meshes.
\begin{figure}[H]
    \centering
    \begin{subfigure}[b]{0.33\textwidth}
        \centering
        \includegraphics[width=\textwidth]{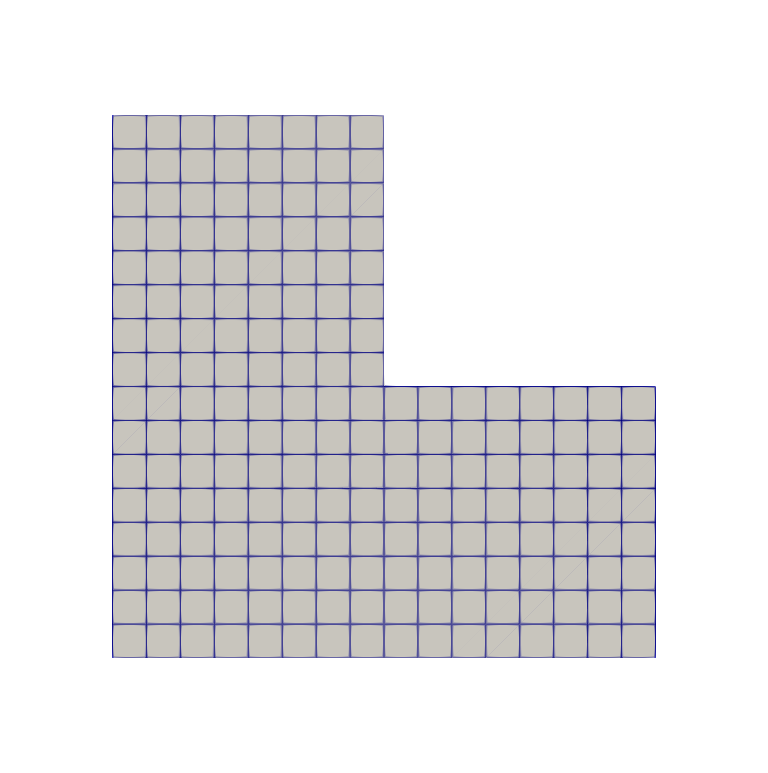}
        \caption{Original quadrilateral mesh}
    \end{subfigure}%
    \begin{subfigure}[b]{0.33\textwidth}
        \centering
        \includegraphics[width=\textwidth]{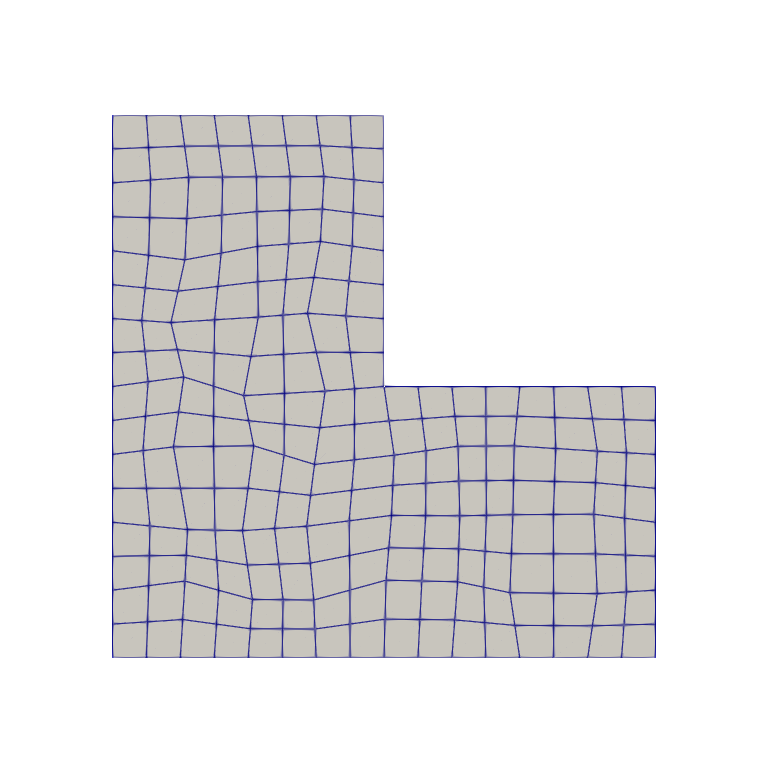}
        \caption{Distorted quadrilateral mesh}
    \end{subfigure}%
    \caption{
    Example 2.1. Initial meshes used for the distorted-mesh study. The left panel shows the original structured quadrilateral mesh, whereas the right panel shows a distorted quadrilateral mesh generated by perturbing the interior vertices. The distorted mesh is used to assess the robustness of the estimator and adaptive algorithm on general quadrilateral meshes.
    }
    \label{fig:Example2_1_mesh_comparison}
\end{figure}

The adaptive meshes obtained with the EG-$\mathbb Q_2$ method are shown in Figure~\ref{fig:Example2_1_mesh}. Similar to Example~2, the refinement remains concentrated near the reentrant corner, where the exact solution exhibits reduced regularity. This indicates that the estimator continues to correctly identify the dominant error regions even on distorted meshes.
\begin{figure}[H]
    \centering
    \begin{subfigure}[b]{0.33\textwidth}
        \centering
        \includegraphics[width=\textwidth]{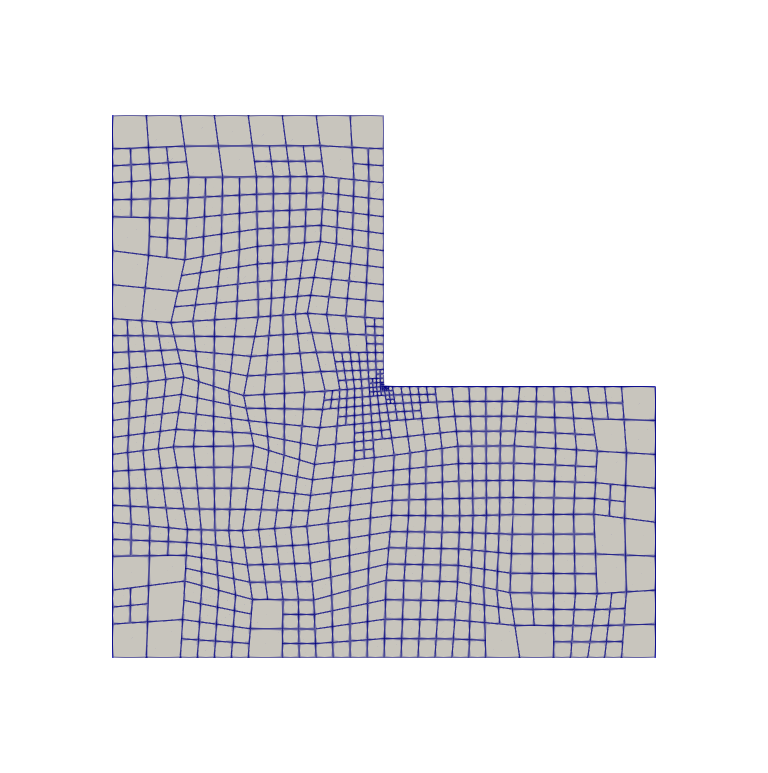}
        \caption{$t = 0.1$}
        \label{fig:p2_Q2_mesh0_distorted}
    \end{subfigure}%
    %\hfill
    \begin{subfigure}[b]{0.33\textwidth}
        \centering
        \includegraphics[width=\textwidth]{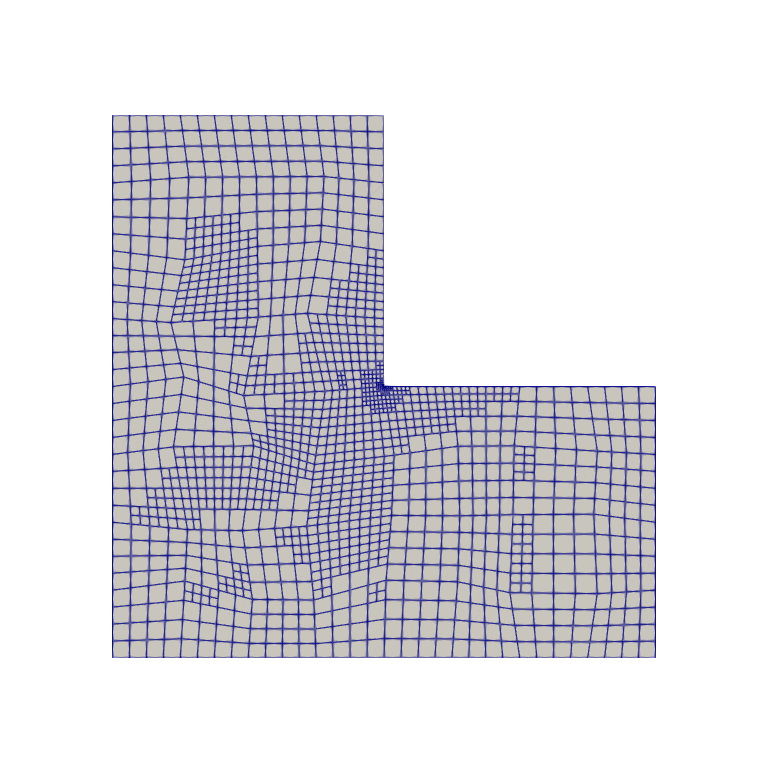}
        \caption{$t=0.25$}
        \label{fig:p2_Q2_mesh1_distorted}
    \end{subfigure}%
    %\hfill
    \begin{subfigure}[b]{0.33\textwidth}
        \centering
        \includegraphics[width=\textwidth]{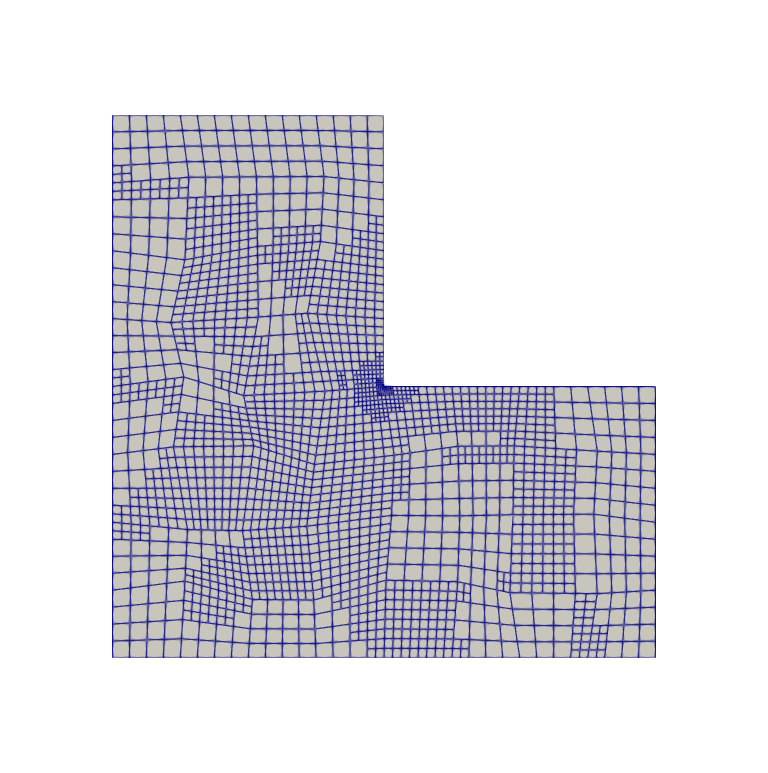}
        \caption{$t=0.5$}
       \label{fig:Example2_1_adaptive_mesh}
    \end{subfigure}%
\caption{
Example 2.1. Adaptive mesh refinement on distorted quadrilateral meshes using EG-$\mathbb Q_2$ with
$\tau = 5\times10^{-4}$,
$\theta_{\mathrm{coarse}}=40\%$,
and
$\theta_{\mathrm{refine}}=20\%$.
The meshes are shown at $t=0.1$, $0.25$, and $0.5$ from left to right.
}
    \label{fig:Example2_1_mesh}
\end{figure}

Figure~\ref{fig:Example2_Error_distorted_mesh} presents the evolution of the $H^1$ error and the corresponding effectivity index. The overall behavior is qualitatively similar to that observed in Example~2. The error remains well controlled throughout the simulation, while the effectivity index stays bounded and exhibits a stable trend in time. These results demonstrate that the proposed residual-based estimator and adaptive refinement procedure remain effective when applied to distorted quadrilateral meshes.

\begin{figure}[H]
\centering
    \begin{subfigure}[b]{0.45\textwidth}
        \centering
        \includegraphics[width=\textwidth]{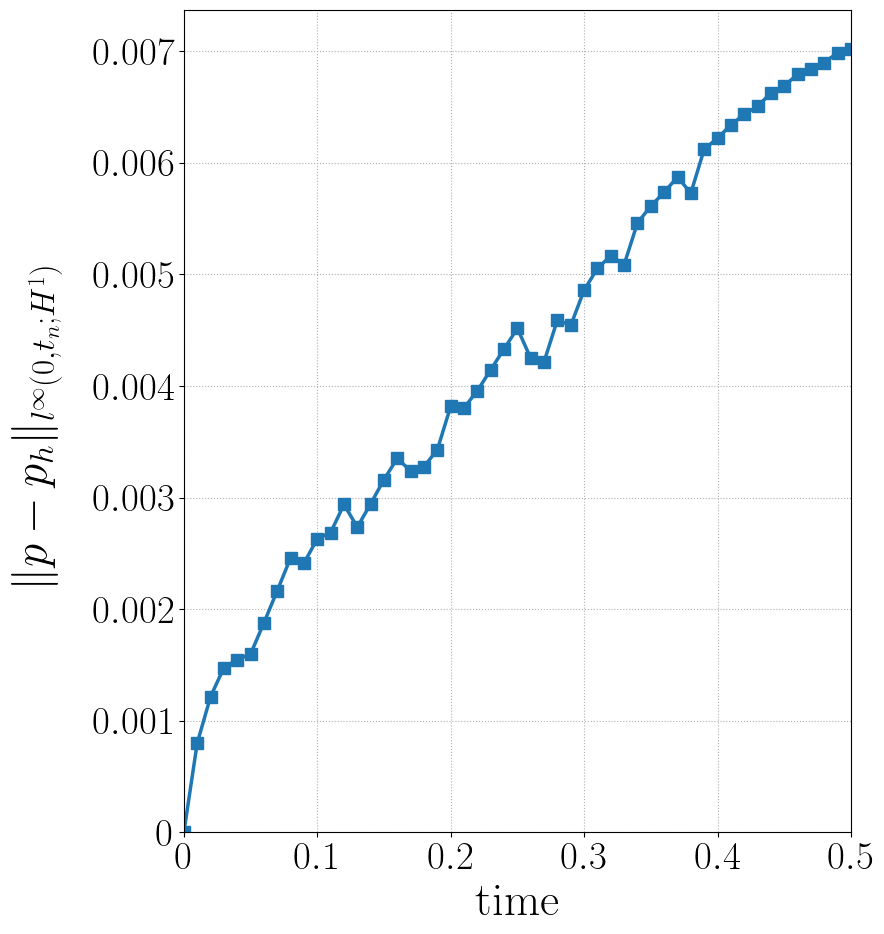}
        \caption{Error}
        % \label{fig:Example2_Error_distorted_mesh}
    \end{subfigure}
    \begin{subfigure}[b]{0.45\textwidth}
        \centering
        \includegraphics[width=\textwidth]{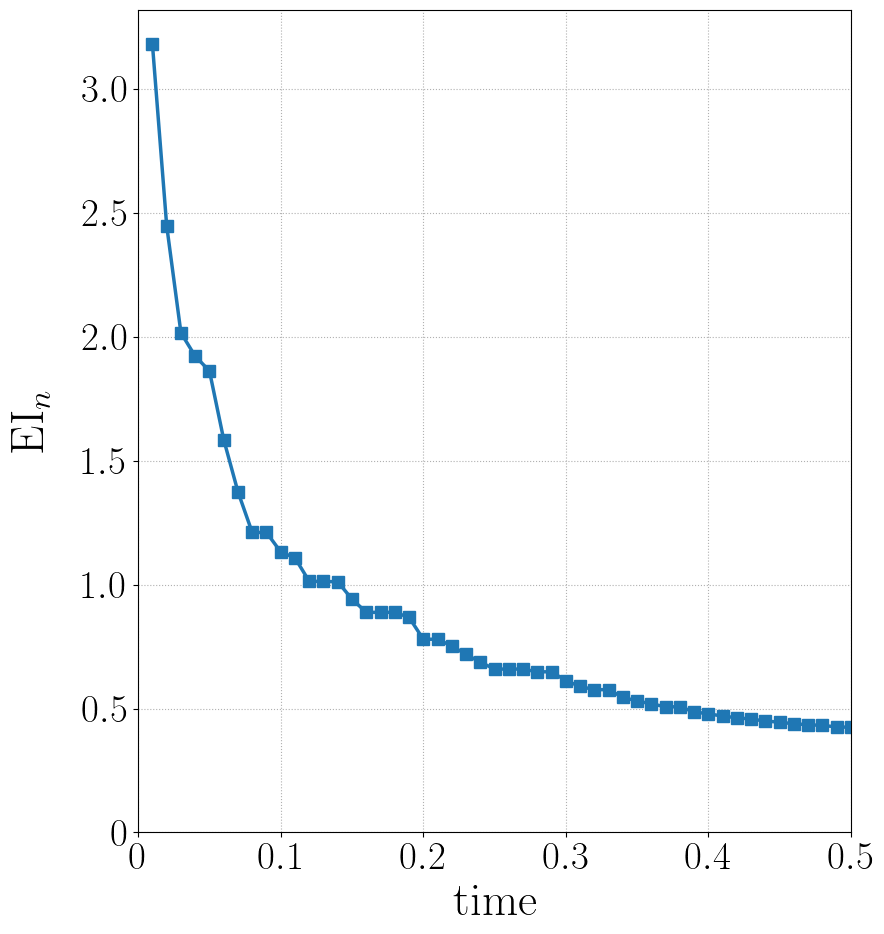}
        \caption{Effectivity index}
    
    \end{subfigure}%
    \caption{Example 2.1. The error and effectivity index using EG-$\mathbb Q_2$ with adaptive mesh refinement in distorted mesh setup with $\tau = 5\times 10^{-4}$, $\theta_\text{coarse}=40\%$, and $\theta_\text{refine}=20\%$. }
    \label{fig:Example2_Error_distorted_mesh}
\end{figure}

\clearpage
\newpage
\textbf{Acknowledgments}
This work for S. Lee and Y. Yang are supported by the National Science Foundation under Grant DMS-2208402.

\bibliographystyle{elsarticle-num} 
\bibliography{reference}

\end{document}